\DeclareMathOperator{\Real}{Re}
\newcommand{\GL}{{\mathrm{GL}}}
\newcommand{\Ind}{{\mathrm{Ind}}}
\newcommand{\I}{{\mathrm{I}}}
\newcommand{\SO}{{\mathrm{SO}}}
\newcommand{\Sp}{{\mathrm{Sp}}}
\newcommand{\wt}{\widetilde}
\def\diag{{\rm diag}}
\newcommand{\whittaker}[2]{\ensuremath{\mathcal{W}(#1,#2)}}
\newcommand{\absdet}[1]{\ensuremath{|\det{#1}|}}
\newcommand{\half}{\ensuremath{\frac12}}
\newcommand{\lmodulo}[2]{\ensuremath{#1 \setminus #2}}
\newcommand{\rconj}[1]{\ensuremath{{}^{#1}}}
\newcommand{\transpose}[1]{\ensuremath{\rconj{t}#1}}
\newcommand{\C}{\ensuremath{\mathbb{C}}}
\newcommand{\glnrep}{\ensuremath{\tau}}
\newcommand{\heisenembedding}{\ensuremath{}}
\newcommand{\intertwining}[2]{\ensuremath{M(#1,#2)}}
\newcommand{\nintertwining}[2]{\ensuremath{M^*(#1,#2)}}
\newtheorem{thm}{Theorem}[section]
\newtheorem{lem}[thm]{Lemma}
\newtheorem{prop}[thm]{Proposition}
\newtheorem {ques/conj}[thm]{Question/Conjecture}
\newtheorem{rmk}[thm]{Remark}
\begin{document}

\title[The Langlands parameter: symplectic groups]{The Langlands parameter of a simple supercuspidal representation: symplectic groups}
\author{Moshe Adrian}
\author{Eyal Kaplan}
\address{Adrian:  Department of Mathematics, Queens College, CUNY, Queens, NY 11367-1597}
\email{moshe.adrian@qc.cuny.edu}
\address{Kaplan: Department of Mathematics, Bar Ilan University, Ramat Gan 5290002, Israel}
\email{kaplaney@gmail.com}

\subjclass[2010]{Primary 11S37, 22E50; Secondary 11F85, 22E55}
\keywords{Simple supercuspidal, Local Langlands Conjecture, Rankin--Selberg method}

\begin{abstract}
Let $\pi$ be a simple supercuspidal representation of the symplectic group $\Sp_{2l}(F)$, over a $p$-adic field $F$.
In this work, we explicitly compute the Rankin--Selberg $\gamma$-factor of rank-$1$ twists of $\pi$. We then completely determine the Langlands parameter of $\pi$, if $p \neq 2$. In the case that $F = \mathbb{Q}_2$, we give a conjectural description of the functorial lift of $\pi$, with which, using a recent work of Bushnell and Henniart, one can obtain its Langlands parameter.
\end{abstract}
\maketitle

\section{introduction}
In this work we compute the Langlands parameter, for the simple supercuspidal representations of symplectic groups
recently constructed by Gross and Reeder \cite{GR10} and Reeder and Yu \cite{RY14}. Our approach involves
the theory of lifting of representations from classical groups to general linear groups (\cite{CKPS2,CKPS,CPSS}),
and the definition of the $\gamma$-factor via the theory of Rankin--Selberg integrals
(\cite{Soudry,Soudry3,Soudry2,Kaplan2013a,Kaplan2015}). To some extent, this paper is a follow-up to \cite{Adrian2016},
where the first named author determined the Langlands parameter for simple supercuspidal representations of
odd orthogonal groups.

Let $F$ be a local $p$-adic field and $\Sp_{2l}=\Sp_{2l}(F)$ be the symplectic group of rank $l$ over $F$.
Fix a maximal unipotent subgroup $N_l$ of $\Sp_{2l}$, an additive character $\psi$ of $F$ of level $1$, and
an extension $\psi_{N_l}$ of $\psi$ to a non-degenerate character of $N_l$.
Consider first an arbitrary supercuspidal representation $\pi$ of $\Sp_{2l}$, where in this work supercuspidal representations are irreducible by definition. Assume $\pi$ is $\psi_{N_l}$-generic. By Cogdell \textit{et. al.} \cite[Proposition~7.2]{CKPS} there exists a unique irreducible generic representation $\Pi$ of $\GL_{2l+1}$, such that the $\gamma$-factor of $\pi\times\tau$
of Shahidi \cite{Sh3}, and the $\gamma$-factor of $\Pi\times\tau$ of Jacquet \textit{et. al.} \cite{JPSS} (or \cite{Shahidi1983}),
are identical for every supercuspidal representation $\tau$ of $\GL_n$.
This lift $\Pi$ corresponds to the standard $L$-homomorphism ${}^L \Sp_{2l} \rightarrow {}^L \GL_{2l+1}$.
Then the Langlands correspondence for $\GL_{2l+1}$ (\cite{Hn,HT2001})
can in theory be used to determine the Langlands parameter $\varphi_{\pi}$ of $\pi$, from the parameter of $\Pi$, but the construction in \cite{CKPS} does not provide sufficient information on $\Pi$ for this purpose.

While describing the Langlands correspondence explicitly is in general a difficult task, in the simple supercuspidal setting it turns out to be tractable. Assume now that $\pi$ is simple supercuspidal. Then it is always $\psi_{N_l}$-generic with respect to a certain extension $\psi_{N_l}$, hence the lift $\Pi$ exists ($\psi_{N_l}$ is given in Section~\ref{simplesupercuspidaldefinition}). The key to determine $\varphi_{\pi}$ is the cuspidal support $\mathrm{supp}(\Pi)$ of $\Pi$ (see \cite{BushnellKutzko1998} for the definition of cuspidal support): with an understanding of $\mathrm{supp}(\Pi)$, we can use the explicit local Langlands correspondence for simple supercuspidal representations of general linear groups to determine $\varphi_{\pi}$ (see \cite{BH10, BH14,AL14}).
Since $\Pi$ is in our case self-dual, and when $\tau$ is supercuspidal, the results of \cite{JPSS}, in particular the multiplicative properties, imply that $\gamma(s,\Pi\times\tau,\psi)$ has a pole at $s=1$ if and only if $\tau$ appears in $\mathrm{supp}(\Pi)$. In this manner we can construct $\mathrm{supp}(\Pi)$, up to multiplicities. Of course, in practice we work with $\gamma(s,\pi\times\tau,\psi)$, because $\pi$ (and not $\Pi$) is the explicit object.

The $\gamma$-factor for $\pi\times\tau$ can also be defined using the method of Rankin--Selberg integrals. This was carried out in
\cite{Kaplan2015} (for $\Sp_{2l}\times\GL_n$), where it was also proved that the definition coincides with that of Shahidi \cite{Sh3}.
The Rankin--Selberg definition turns out to be advantageous for the type of computations presented here, and will be our main tool.

According to the parametrization of simple supercuspidal representations (\cite{GR10,RY14}), we can identify $\pi$ (uniquely) with a triplet consisting of a uniformizer $\varpi$, a character $\omega$ of the center of $\Sp_{2l}$, and a representative $\alpha$ of a square class in the multiplicative group of the residue field of $F$. We then denote $\pi=\pi_{\alpha}^{\omega}$, and the uniformizer $\varpi$ will be suppressed from the notation throughout. See Section~\ref{simplesupercuspidaldefinition} for a precise definition and more details. We first prove:

\begin{thm}\label{thm:main}
Assume $p \neq 2$. Let $\pi=\pi_{\alpha}^{\omega}$, and let $\tau$ be a tamely ramified quadratic character of $F^{\times}$. Then $\gamma(s, \pi\times \tau, \psi_{\alpha})$ has a pole at $s = 1$
if and only if $\tau$ is the unique non-trivial quadratic character of $F^{\times}$ such that $\tau(\varpi) =  \gamma_{\psi_{\alpha}}(\varpi)^{-1}  \frac{|G(\psi^{-1}_{\alpha})|}{G(\psi^{-1}_{\alpha})}$ and $\tau|_{\mathfrak{o}^{\times}} \equiv \gamma_{\psi_{\alpha}}|_{\mathfrak{o}^{\times}}$.
Here $\gamma_{\psi_{\alpha}}$ is the Weil index of $\psi_{\alpha}$ (see Section~\ref{subsection:The groups and the Shimura integral}), $\mathfrak{o}^{\times}$ is the group of units of $F$ and
$G(\psi)$ is a certain Gauss sum (see Lemma~\ref{lemma:computation Psi M(tau,s)f_s with p not 2}).
\end{thm}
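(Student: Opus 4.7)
The plan is to exploit the Rankin--Selberg realization of $\gamma(s,\pi\times\tau,\psi_{\alpha})$ from \cite{Kaplan2015}: this $\gamma$-factor is the proportionality constant in a functional equation relating a zeta integral $\Psi(s,W,f_s)$ to its dual $\widetilde\Psi(s,W,M(\tau,s)f_s)$, where $W$ lies in the $\psi_{N_l}$-Whittaker model of $\pi$ and $f_s$ is a section of the induced representation of (a rank-1 Levi component of) $\Sp_{2l}$ attached to $\tau|\cdot|^{s-1/2}$. Because $\pi$ is self-dual, the pole of $\gamma(s,\pi\times\tau,\psi_{\alpha})$ at $s=1$ detects exactly when $\tau$ appears in $\mathrm{supp}(\Pi)$, as outlined in the introduction, so the entire question reduces to an explicit evaluation of both sides of the functional equation at $s=1$.

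The first main step is to compute $\Psi(s,W_\pi,f_s)$ for the simple supercuspidal $\pi=\pi_{\alpha}^{\omega}$. A distinguishing feature of simple supercuspidals is that their Whittaker function, with respect to the prescribed extension $\psi_{N_l}$, is supported on a very small double coset and is entirely determined by its values on the affine simple root subgroup. For a judicious choice of test section $f_s$ (supported near the identity in the Iwasawa decomposition), the integral collapses to a single integral over $\mathfrak{o}^{\times}$ producing the Gauss sum $G(\psi^{-1}_{\alpha})$, and is essentially independent of $s$.

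The second step is to analyze $M(\tau,s)f_s$. The intertwining operator on the induced representation of $\Sp_{2l}$ attached to $\tau|\cdot|^{s-1/2}$ can be written as an explicit unipotent integral; evaluated on our test section it produces a dual section whose behavior is controlled by the local $L$-factor of $\tau^2=\mathbf{1}$, which is precisely what contributes a pole at $s=1$. Substituted into $\widetilde\Psi$, the Whittaker function again collapses the computation to an integral over $\mathfrak{o}^{\times}$, yielding a Gauss sum that this time depends on $\tau$ and on the Weil index $\gamma_{\psi_{\alpha}}$ arising from the symplectic normalization of $M(\tau,s)$. This is the content packaged in Lemma~\ref{lemma:computation Psi M(tau,s)f_s with p not 2}.

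Taking the ratio of the two explicit expressions, the pole at $s=1$ survives only when the accompanying Gauss-sum prefactor is non-zero. Non-vanishing forces $\tau|_{\mathfrak{o}^{\times}}\equiv\gamma_{\psi_{\alpha}}|_{\mathfrak{o}^{\times}}$, and then the residue formula forces the explicit value $\tau(\varpi)=\gamma_{\psi_{\alpha}}(\varpi)^{-1}\,|G(\psi^{-1}_{\alpha})|/G(\psi^{-1}_{\alpha})$. These two conditions uniquely pin down a single tamely ramified quadratic character $\tau$ of $F^{\times}$, which is exactly the assertion. The main technical obstacle I expect is the careful bookkeeping of measures, of the Weil-index constants from the intertwining operator, and of the correct choice of test section $f_s$ so that $\Psi(s,W_\pi,f_s)$ is non-zero and both Gauss sums are simultaneously controllable; the hypothesis $p\neq 2$ enters precisely to guarantee that these Gauss sums behave as in the classical residue-field calculation and that quadratic characters of $F^{\times}$ are enumerated as expected.
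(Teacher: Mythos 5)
Your high-level plan matches the paper's: pick explicit test data (Whittaker function supported on $N_lZI^+$, a section $f_s$ supported near the identity, a specific Schwartz function $\phi$), compute both $\Psi(W,\phi,f_s)$ and $\Psi(W,\phi,M(\tau,s)f_s)$, and read the pole behavior off the ratio via \eqref{def2:gamma}. That is exactly the architecture of Section~\ref{pneq2section}. However, two points in your sketch are factually off, and the second is material to the logic.

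First, you assert that $\Psi(s,W_\pi,f_s)$ collapses to an integral over $\mathfrak{o}^\times$ ``producing the Gauss sum $G(\psi^{-1}_\alpha)$.'' That is not what happens. In Lemma~\ref{lemma:computation Psi with p not 2}, $\Psi(W,\phi,f_s)$ evaluates to a nonzero constant made entirely of volume factors and the fixed root of unity $\beta_\psi^{-2}\gamma_\psi^{-1}(-1)$ -- no Gauss sum and no $\tau$-dependence at all. All of the $\tau$-dependent arithmetic, including the Gauss sum $G(\psi^{-1})$, appears only in $\Psi(W,\phi,M(\tau,s)f_s)$, through Lemma~\ref{lemma:computation M(tau,s)f_s with p not 2} and Lemma~\ref{lemma:computation Psi M(tau,s)f_s with p not 2}, and there only in the case $\tau|_{\mathfrak{o}^\times}\equiv\gamma_\psi|_{\mathfrak{o}^\times}$. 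Having the constant denominator is exactly what makes the pole/zero analysis clean; if both integrals contributed Gauss sums one would not obtain the simple cancellation mechanism the paper uses.

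Second, the way you describe where the pole comes from and why the two conditions on $\tau$ are forced is misleading. The pole of $\gamma(s,\pi\times\tau,\psi_\alpha)$ at $s=1$ comes from the factor $\gamma(2s-1,\tau^2,\psi_2)$ in the normalization $C(s,\tau,\psi)$ (equivalently, from $L(2-2s,\tau^{-2})$), \emph{not} from the $L$-factor $L(2s-1,\tau^2)$ appearing inside $\Psi(W,\phi,M(\tau,s)f_s)$ -- the latter is finite at $s=1$. The question is then whether $\Psi(W,\phi,M(\tau,s)f_s)$, which has the shape $A(\tau,\psi,s)\tau(-1)(q-1)\bigl(L(2s-1,\tau^2)-1-\tfrac{\Lambda(\tau,\psi,s)}{q-1}\bigr)$ up to the constant $\Psi(W,\phi,f_s)$, vanishes at $s=1$; it does precisely when $\Lambda(\tau,\psi,1)=1$. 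This is not a matter of a ``Gauss-sum prefactor'' being nonzero -- the Gauss sum $G(\psi^{-1})$ is always a nonzero fourth-root-of-unity times $q^{1/2}$. Rather: when $\tau|_{\mathfrak{o}^\times}\equiv(\varpi,\cdot)\gamma_\psi|_{\mathfrak{o}^\times}$ one gets $\Lambda\equiv 1$ identically (zero cancels the pole, no pole); when $\tau|_{\mathfrak{o}^\times}\equiv\gamma_\psi|_{\mathfrak{o}^\times}$, $\Lambda(\tau,\psi,1)=-\tau(\varpi)\gamma_\psi(\varpi)q^{-1/2}G(\psi^{-1})$, and this equals $1$ for exactly one of the two admissible values $\tau(\varpi)=\pm\gamma_\psi^{-1}(\varpi)|G(\psi^{-1})|/G(\psi^{-1})$, so the pole survives for exactly the other value. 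You should also record the final reduction of the general $\alpha$ to $\alpha=1$ by conjugating the Whittaker character via a torus element and replacing $\psi$ by $\psi_\alpha$ (which has the same level), as the paper does at the end of Section~\ref{pneq2section}.
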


According to the theorem, $\mathrm{supp}(\Pi)$ contains a unique character $\tau_{\alpha}$, which may appear with
some finite multiplicity. Note that this character does not depend on $\omega$.
By a recent result of Blondel \textit{et. al.} \cite{BHS17}, $\varphi_{\pi}$ decomposes as a direct sum $\varphi_1 \oplus \varphi_2$, where $\varphi_1$ is an irreducible $2l$-dimensional representation of the Weil group $W_F$, and $\varphi_2$ is a $1$-dimensional representation of $W_F$; in particular, $\varphi_2 = \tau_{\alpha}$.   As we will show (see Section~\ref{fullparameter}), the properties of the $\gamma$-factor and the work of \cite{BHS17} imply that $\varphi_{\pi}$ is determined by $\tau_{\alpha}$ and by the computation of
$\gamma(s, \pi, \psi_{\alpha})$. We emphasize here that it is not enough to locate the pole in $\gamma(s, \pi\times \tau, \psi_{\alpha})$ and use the work of \cite{BHS17}; one needs additional information, and this can be obtained from $\gamma(s, \pi, \psi_{\alpha})$.

We turn to describe the parameter, first in the case that $p>2$ and $p \nmid l$.
We construct a totally ramified extension $E$ of $F$, and a character $\xi_{\alpha}^{\omega}$ of $E^{\times}$.
This character depends on several parameters including $\omega$, $\alpha$ and $\varpi$, and also $\psi$, $\tau_{\alpha}$
and the Langlands constant $\lambda_{E/F}(\psi)$. For more details and precise notation see Section~\ref{fullparameter}.
The parameters $\varphi_1$ and $\varphi_{\pi}$ are given by the following theorem, which is our main result:
\begin{thm}\label{thm:main param}
Assume $p$ is odd and $p \nmid l$. Then
\begin{align*}
\varphi_1 = \mathrm{Ind}_{W_E}^{W_F} \xi_{\alpha}^{\omega},\qquad \varphi_{\pi} = \mathrm{Ind}_{W_E}^{W_F} \xi_{\alpha}^{\omega} \oplus \tau_{\alpha}.
\end{align*}
\end{thm}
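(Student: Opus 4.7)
The plan is to determine the supercuspidal support of the functorial lift $\Pi$, identify its $\GL_{2l}$-constituent as a simple supercuspidal representation, and then pin down the inducing character on the Galois side by matching $\gamma$-factors.

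\emph{Step 1 (support of $\Pi$ and reduction to $\GL_{2l}$).} By the multiplicativity of the $\gamma$-factor of \cite{JPSS}, a $1$-dimensional character $\tau$ occurring in $\mathrm{supp}(\Pi)$ contributes a pole to $\gamma(s,\Pi\times\tau,\psi_\alpha)=\gamma(s,\pi\times\tau,\psi_\alpha)$ at $s=1$. Theorem~\ref{thm:main} therefore forces the only tamely ramified quadratic character in $\mathrm{supp}(\Pi)$ to be $\tau_\alpha$. Since $\Pi$ is self-dual of rank $2l+1$, the remaining part of $\mathrm{supp}(\Pi)$ must be a single self-dual irreducible supercuspidal $\Pi_1$ of $\GL_{2l}$. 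Combined with \cite{BHS17}, which asserts $\varphi_\pi=\varphi_1\oplus\varphi_2$ with $\varphi_1$ irreducible of dimension $2l$ and $\varphi_2$ a character, this identifies $\varphi_2=\tau_\alpha$ and $\varphi_1$ with the Langlands parameter of $\Pi_1$.

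\emph{Step 2 (simple supercuspidal structure on $\GL_{2l}$).} Since $\pi$ has depth $1/(2l)$ and $\tau_\alpha$ is tame, a standard conductor and depth comparison shows that $\Pi_1$ has depth $1/(2l)$ and Swan conductor $1$. Under the hypothesis $p\nmid 2l$, an irreducible self-dual supercuspidal of $\GL_{2l}$ with this conductor must be simple supercuspidal in the sense of \cite{GR10}. The explicit local Langlands correspondence for simple supercuspidal representations of $\GL_{2l}$, due to Bushnell--Henniart \cite{BH10,BH14} and Adrian--Liu \cite{AL14}, then produces a totally ramified degree-$2l$ extension $E/F$ and a character $\xi$ of $E^\times$ with $\varphi_1=\mathrm{Ind}_{W_E}^{W_F}\xi$; the extension $E$ that emerges is exactly the one attached to the datum $(\varpi,\alpha)$ in Section~\ref{fullparameter}.

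\emph{Step 3 (determining $\xi$).} It remains to show $\xi=\xi_\alpha^\omega$. The restriction $\xi|_{\mathfrak{o}_E^\times}$ is pinned down by the recipe of \cite{BH10,AL14} in terms of the central character $\omega$, the parameter $\alpha$, and the uniformizer $\varpi$, combined with local class field theory applied to $\det\varphi_\pi$. The delicate piece is the value $\xi(\varpi_E)$ on a uniformizer of $E$. To extract it, I would compute the absolute $\gamma$-factor $\gamma(s,\pi,\psi_\alpha)$ via the Rankin--Selberg method of \cite{Kaplan2015} and equate it with the Galois-side expression $\gamma(s,\mathrm{Ind}_{W_E}^{W_F}\xi,\psi_\alpha)\,\gamma(s,\tau_\alpha,\psi_\alpha)$. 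Inductivity of $\gamma$-factors rewrites the first factor as $\lambda_{E/F}(\psi)$ times Tate's $\gamma$-factor of $\xi$, and comparison of the resulting constants yields $\xi(\varpi_E)$ as an explicit expression in $\lambda_{E/F}(\psi)$, the Weil index $\gamma_{\psi_\alpha}$, the Gauss sum $G(\psi_\alpha)$, and $\tau_\alpha(\varpi)$; this is precisely the formula that defines $\xi_\alpha^\omega$ in Section~\ref{fullparameter}. The main technical obstacle is this explicit Rankin--Selberg computation for simple supercuspidals of $\Sp_{2l}$, combined with the careful bookkeeping of Weil indices, Langlands constants, and Gauss sums required to match the two sides exactly.
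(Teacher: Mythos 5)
Your overall route matches the paper's: invoke \cite{BHS17} to split $\varphi_{\pi}=\varphi_1\oplus\varphi_2$, identify $\varphi_2=\tau_{\alpha}$ from Theorem~\ref{thm:main}, pin down $\varphi_1$ as $\Ind_{W_E}^{W_F}\xi$ for a tame degree-$2l$ totally ramified $E/F$, and then separately determine $\xi|_{1+\mathfrak{p}_E}$, $\xi|_{\kappa_F^{\times}}$, and $\xi$ on a uniformizer of $E$, the last by matching the absolute Rankin--Selberg $\gamma$-factor $\gamma(s,\pi,\psi_{\alpha})$ against $\gamma(s,\varphi_1,\psi_{\alpha})\gamma(s,\tau_{\alpha},\psi_{\alpha})$ through \cite{AL14} and inductivity. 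This is essentially the structure of Section~\ref{fullparameter}, and your treatment of the determinant constraint and of $\xi(\varpi_E)$ tracks the paper's.

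However, there is a genuine gap in how you identify $E$ and $\xi|_{1+\mathfrak{p}_E}$. You use \cite{BHS17} only for the abstract decomposition $\varphi_1\oplus\varphi_2$, and then try to recover the simple supercuspidal structure of $\Pi_1$ from a ``standard conductor and depth comparison.'' But knowing that $\Pi_1$ is simple supercuspidal of depth $1/(2l)$ does not, by itself, determine which one: simple supercuspidals of $\GL_{2l}$ are parametrized by a choice of uniformizer (equivalently, the field $E$), a root of the central character on that uniformizer, and the central character itself, and these are not fixed by depth or Swan conductor. The paper's proof relies on the sharper output of \cite{BHS17}: the affine generic character of $\Pi_1$ on $I^{+}_{\GL_{2l}}$ equals $\tilde\chi^{2}$, where $\tilde\chi$ is the unique $\theta$-invariant extension of $\chi$. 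The explicit root-group computation identifying $\tilde\chi$, squaring it, and normalizing by a torus conjugation is what produces the specific uniformizer $\varpi_{\alpha,l}=((-1)^{l+1}4\alpha)^{-1}\varpi$ (hence $E=F(\zeta)$) and the formula $\xi(x)=\tilde\chi^{2}(\iota(x))$ on $1+\mathfrak{p}_E$. Your sentence ``the extension $E$ that emerges is exactly the one attached to the datum $(\varpi,\alpha)$ in Section~\ref{fullparameter}'' asserts precisely the conclusion of this computation without providing it. Similarly, ``$\xi|_{\mathfrak{o}_E^{\times}}$ is pinned down by the recipe of \cite{BH10,AL14}\ldots'' glosses over the fact that the recipe takes the affine generic character of $\Pi_1$ as input, which only becomes available once the $\tilde\chi^{2}$ identification from \cite{BHS17} is in place. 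So you need to upgrade Step~2: use the full statement of \cite{BHS17} about the inducing data of $\Pi_1$, carry out the $\theta$-invariance and torus-conjugation calculation to get $\varpi_{\alpha,l}$, and then feed that into \cite{AL14}.
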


In the case that $2<p|l$, the Langlands parameter of $\pi$ can be explicitly written down using our work, the work of \cite{BHS17}, and the work of \cite{BH14}.  See Remark~\ref{rmk:fullparameter} for more details.

Thus far we have excluded dyadic fields. For $F = \mathbb{Q}_2$, there is a unique simple supercuspidal representation $\pi$ of $\Sp_{2l}$. We prove:
\begin{thm}\label{thm:main2}
Assume $\tau$ is a tamely ramified character of $F^{\times}$. Then $\gamma(s,\pi\times\tau,\psi) = \tau(2) 2^{1/2-s}$.
\end{thm}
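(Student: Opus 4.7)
The plan is to run the Rankin--Selberg computation used for Theorem~\ref{thm:main}, now specialized to $F=\mathbb{Q}_2$. By construction, $\gamma(s,\pi\times\tau,\psi)$ is characterized (up to normalization) by the functional equation
\[
\Psi(W,M(\tau,s)f_s) \;=\; \gamma(s,\pi\times\tau,\psi)\,\Psi(W,f_s),
\]
for $W$ a Whittaker function of $\pi$ and $f_s$ a section of the induced representation depending on $\tau$. The task reduces to computing both sides on a convenient pair of test vectors.

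The decisive simplification for $F=\mathbb{Q}_2$ is arithmetic. The residue field is $\mathbb{F}_2$, so $\mathfrak{o}^{\times}=1+\mathfrak{p}$ and every tamely ramified character of $F^{\times}$ is in fact unramified; $\tau$ is thus determined by the single value $\tau(2)$. Moreover $\mathbb{F}_2^{\times}/(\mathbb{F}_2^{\times})^2$ is trivial, which removes the square-class ambiguity in the parametrization of simple supercuspidals from Section~\ref{simplesupercuspidaldefinition} and matches the asserted uniqueness of $\pi$. In particular the Gauss-sum and Weil-index prefactors that dominate the proof of Theorem~\ref{thm:main} do not appear in the dyadic case, and the intertwining operator $M(\tau,s)$ acts essentially through its unramified data.

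I would take $W$ to be the new-vector Whittaker function of the (unique) simple supercuspidal $\pi$, built explicitly from the $\Sp_{2l}(\mathbb{Q}_2)$ construction of \cite{GR10,RY14} as recalled in Section~\ref{simplesupercuspidaldefinition}, and take $f_s$ to be the standard section supported on the open Bruhat cell. The support condition on $W$ should collapse $\Psi(W,f_s)$ to a single cell contribution, a volume factor that is a power of $2$. For $\Psi(W,M(\tau,s)f_s)$ I would change variables to return the intertwined section to a comparable form, reducing the computation to an elementary Tate-type integral in $\tau(2)q^{-s}$; because $\tau$ is unramified this evaluates as a geometric series. Dividing the two, with the appropriate normalization, should yield exactly $\tau(2)\cdot 2^{1/2-s}$.

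The main obstacle is the dyadic arithmetic underlying the intertwining operator: Weil indices, quadratic Gauss sums and square classes in $\mathbb{Q}_2^{\times}$ are considerably more delicate than for odd residue characteristic, and keeping track of them inside the Rankin--Selberg integral requires care. The saving grace is twofold: the condition $\tau|_{\mathfrak{o}^{\times}}\equiv 1$ trivializes every non-trivial character sum over $\mathbb{F}_2^{\times}$, and the target formula contains no Weil-index prefactor. I therefore expect the subtle dyadic quantities to cancel, leaving only the unramified datum $\tau(2)$ and the expected conductor shift $q^{1/2-s}=2^{1/2-s}$.
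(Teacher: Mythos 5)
Your high-level plan — compute $\Psi(W,\phi,f_s)$ and $\Psi(W,\phi,M(\tau,s)f_s)$ on explicit test data, then divide — is indeed the paper's strategy, and you correctly observe that tamely ramified means unramified when $F=\mathbb{Q}_2$ and that the square-class ambiguity disappears. However, your expectation that ``the subtle dyadic quantities cancel'' because ``the target formula contains no Weil-index prefactor'' is not just optimistic; it misdescribes what actually happens, and this is where the real work lies.

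First, the test data themselves require a dyadic-specific construction. For $p>2$ the paper uses a section $f_s$ supported on $\widetilde{B}_1\mathcal{N}$ with $\mathcal{N}$ the obvious congruence subgroup, and the splitting of the cover over $\Sp_2(\mathfrak{o})$ is supplied by Kubota. For $F=\mathbb{Q}_2$ one must \emph{shrink} $\mathcal{N}$ (to $\left(\begin{smallmatrix}1+\mathfrak{p}^3 & \mathfrak{p}^2\\ \mathfrak{p}^3 & 1+\mathfrak{p}^3\end{smallmatrix}\right)$) and \emph{prove directly} that the trivial section splits the metaplectic cover over this smaller group; the paper devotes a separate proposition to this, and your proposal does not acknowledge the issue. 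Your suggestion of a section ``supported on the open Bruhat cell'' also does not match the paper's choice. Second, the character sums you claim trivialize are not sums over $\mathbb{F}_2^{\times}$ (which is indeed trivial) but sums over $(1+\mathfrak{p})/(1+\mathfrak{p}^3)=\{\pm 1,\pm 5\}$ of quantities like $\gamma_{\psi}^{-1}(v)(2^n,v)$; these are genuine dyadic Gauss-type sums, they vanish for $n$ even and contribute a factor $2+2\psi(2^{-1})$ for $n$ odd, so the contribution depends on the parity of the valuation and does not simply reduce to an unramified geometric series. Third, the final assembly is not a cancellation but a verification of the specific identity $\psi(-2^{-1})\,\gamma(\psi)\,(1+\psi(2^{-1}))=2^{1/2}$, together with the facts $\gamma_{\psi}(2)=1$ and $\gamma_{\psi}(-1)=\psi(-2^{-1})$, and it essentially hinges on knowing the precise normalization $C(s,\tau,\psi)$ from Sweet's computation. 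Without tracking all of these you cannot obtain the clean constant $\tau(2)2^{1/2-s}$; the simplicity of the answer is the \emph{output} of a delicate computation, not evidence that the computation is routine.
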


In this case it is expected that the functorial lift of $\pi$ to $\GL_{2l+1}$ is supercuspidal. Therefore $\mathrm{supp}(\Pi)=\{\Pi\}$, and in particular $\gamma(s,\pi\times\tau,\psi)$ should be entire (as we show). Taking this for granted, the results of \cite{BH14, AL14} imply that this lift is already determined by Theorem~\ref{thm:main2}. More specifically, the functorial lift of $\pi$ to $\GL_{2l+1}$ is the unique simple supercuspidal representation $\Pi$ with a trivial central character, such that $\gamma(s, \Pi,\psi) = 2^{1/2-s}$. One can write explicit inducing data for $\Pi$, see \cite{BH14, AL14}, and then obtain $\varphi_{\pi}$ using \cite{BH14} (in fact, assuming the lift is supercuspidal, the weaker version of Theorem~\ref{thm:main2} with just $\tau=1$ suffices).

As mentioned above, we use the Rankin--Selberg definition of the $\gamma$-factor for $\pi\times\tau$. This definition is based on
a global and local theory of integral representations, with strong local uniqueness properties.
The integrals for $\Sp_{2l}\times\GL_n$ are called Shimura type integrals. The integrals for a pair of irreducible automorphic cuspidal globally generic representations (globally), or generic representations (locally), of $\Sp_{2l}$ and $\GL_n$, were introduced by Ginzburg \textit{et. al.} \cite{GRS4}. They developed the global theory and used the integrals to study the partial $L$-function. In particular, they computed the local integrals with unramified data. The $\gamma$-factor $\gamma(s,\pi\times\tau,\psi)$, for a pair of generic representations $\pi$ and $\tau$, over a local field, was defined in \cite{Kaplan2015} as the proportionality factor between two integrals $\Psi(W,\phi,f_s)$ and $\Psi^*(W,\phi,f_s)$. The data for the integrals consist of a Whittaker function $W$ in the Whittaker model of $\pi$, a Schwartz--Bruhat function $\phi$ in the space of a Weil representation, and
a section $f_s$ in the space of the representation parabolically induced (essentially) from $\tau\otimes|\det|^{s-1/2}$ to the double cover $\widetilde{\Sp}_{2n}$ of $\Sp_{2n}$ (the metaplectic group). The integral $\Psi^*(W,\phi,f_s)$ is obtained from $\Psi(W,\phi,f_s)$ by applying a normalized intertwining operator $\nintertwining{\tau}{s}$ to $f_s$. The equation defining $\gamma(s,\pi\times\tau,\psi)$ is essentially
\begin{align*}
\gamma(s,\pi\times\tau,\psi)\Psi(W,\phi,f_s)=\Psi^*(W,\phi,f_s).
\end{align*}
By the general theory of Rankin--Selberg integrals, one may always choose data such that $\Psi(W,\phi,f_s)$ is constant. The analytic behavior of $\gamma(s,\pi\times\tau,\psi)$ is therefore controlled by $\Psi^*(W,\phi,f_s)$. When the representations are supercuspidal (of any depth), and $l>n$, the integral is holomorphic, and therefore any pole must already be a pole of the intertwining operator. Of course, the poles of $\nintertwining{\tau}{s}$ are independent of $\pi$. It is therefore the zeros of the integral that we seek. For precise definitions see Section~\ref{subsection:The groups and the Shimura integral}.

In our setting $\pi=\pi_{\alpha}^{\omega}$ and $n=1$. The operator $\nintertwining{\tau}{s}$ is normalized such that it is holomorphic at $s=1$ unless $\tau$ is quadratic, and if it has a pole, it must be simple. The bulk part of the work is then to relate the existence of a zero to the relation between $\pi_{\alpha}^{\omega}$ and $\tau$.

The main idea in the proof is to choose the section $f_s$ such that
$\nintertwining{\tau}{s}f_s$ can be computed succinctly. The computation for the case of $p=2$ is more delicate. Even though the $\gamma$-factor is in this case entire, we are interested in obtaining its precise value. The precise normalization factor of the intertwining operator becomes crucial for this result, and we benefited from its computation by Sweet \cite{Sweet95} (reproduced in \cite[Appendix]{SzpGol15}, see also \cite{Dani,Szpruch11}).

Other works on Rankin--Selberg integrals for pairs of generic representations, one typically of a classical group, the other of a general linear group, include \cite{GPS,G,GRS4,BS,RGS,BS2}. For a survey on the Rankin--Selberg method see \cite{Bu}.

The first named author constructed the Langlands parameter for simple supercuspidal representations of odd orthogonal groups \cite{Adrian2016}. While the approach is similar, the computations in the symplectic case are far more subtle than the analogous computations in the odd orthogonal case. The main reason is that here, as opposed to the odd orthogonal case, the $\gamma$-factors may contain poles. In addition, the Shimura type integrals are in general more subtle to compute, because their construction involves a covering group and the Weil representation. Another difference is in the case $p=2$, which must be treated separately here.

Supercuspidal representations are a fundamental object of study in the category of complex smooth representations of reductive $p$-adic groups. In a certain sense, these are the building blocks of all irreducible representations. The construction of these representations in increasing generality has been the goal of a long line of works, including
\cite{Howe1977,Adler1998,BushnellKutzko1998,JKYu2001,Stevens2005,Stevens2008}. The depth of an irreducible representation was defined by Moy and Prasad \cite{MoyPrasad1994,MoyPrasad1996}, and supercuspidal representations of minimal nonzero depth are called simple supercuspidal, which as mentioned above were constructed recently in \cite{GR10,RY14}.

Under a mild assumption on the residual characteristic, Kaletha \cite{K13,K15} has constructed a correspondence for simple supercuspidal representations (in the context of tamely ramified connected reductive $p$-adic groups), and has verified that his correspondence satisfies many of the expected properties of the Langlands correspondence. The Langlands correspondence for simple supercuspidal representations was also studied in \cite{GR10,RY14}. Other recent works on simple supercuspidal representations and their Langlands parameters include \cite{O18} and \cite{FRT18}.

The rest of this work is organized as follows. Section~\ref{section:preliminaries} contains preliminaries. In Section~\ref{subsection:The groups and the Shimura integral} we review the construction of the Shimura type integrals for $\Sp_{2l}\times\GL_n$. The construction of simple supercuspidal representations of $\Sp_{2l}$ is described in Section~\ref{simplesupercuspidaldefinition}. The computation of the $\gamma$-factor is contained in Section~\ref{section:The computation of the gamma-factor}: Theorem~\ref{thm:main} is proved in Section~\ref{pneq2section}, the Langlands parameter is computed in Section~\ref{fullparameter}, and Theorem~\ref{thm:main2} is proved in Section~\ref{Q2}.

\subsection*{Acknowledgements}
We would like to thank Baiying Liu, Gordan Savin, Shaun Stevens and Geo Kam-Fai Tam for helpful conversations. Support to Adrian was provided by a grant from the Simons Foundation \#422638 and by a PSC-CUNY award, jointly funded by the
Professional Staff Congress and The City University of New York. Kaplan was
supported by the Israel Science Foundation, grant number 421/17. Lastly, we thank the referee for her/his careful reading of the manuscript and helpful remarks, which helped improve the presentation.

\section{preliminaries}\label{section:preliminaries}
\subsection{The groups and the Shimura type integral}\label{subsection:The groups and the Shimura integral}
Let $F$ be a $p$-adic field with a ring of integers $\mathfrak{o}$, maximal ideal $\mathfrak{p}=\mathfrak{p}_F$ and
a uniformizer $\varpi$. Denote the residue field $\mathfrak{o} / \mathfrak{p}$ by $\kappa_F$ and let $q$ be the cardinality of $\kappa_F$ (then $|\varpi|=q^{-1}$). Let $\psi$ be a non-trivial additive character of $F$. Denote the Weil index of $x\mapsto\psi(x^2)$ by $\gamma(\psi)$, and $\gamma_{\psi}(a)=\gamma(\psi_a)/\gamma(\psi)$ is the Weil factor,
where $\psi_a(x)=\psi(ax)$ for $a\in F^*$ (see \cite[Appendix]{Rao}).

Let $\Sp_{2l}=\Sp_{2l}(F)$ be the symplectic group on $2l$ variables, defined with respect to the symplectic
bilinear form $(x,y)\mapsto {}^tx\left(\begin{smallmatrix}&J_l\\-J_l\end{smallmatrix}\right)y$, where ${}^tx$ is the transpose of $x$ and
$J_l$ is the $l\times l$ matrix having $1$ on the anti-diagonal and $0$ elsewhere. Fix the Borel subgroup $B_l=T_l\ltimes N_l$ of upper triangular matrices in $\Sp_{2l}$, where $T_l$ is the torus and
$N_l$ is the unipotent radical. For $1\leq k\leq l$, let $Q_k<\Sp_{2l}$ be the standard maximal parabolic
subgroup whose Levi part is isomorphic to $\GL_k\times\Sp_{2(l-k)}$, and $\delta_{Q_k}$ be its modulus
character. For any $a\in \GL_k$, put $a^*=J_k(\transpose{a}^{-1})J_k$. The Levi subgroup of $Q_k$ is then
$\{\diag(a,h,a^*):a\in\GL_k,h\in\Sp_{2(l-k)}\}$. Also put $w_{l}=\left(\begin{smallmatrix}&I_{l}\\-I_{l}\end{smallmatrix}\right)\in \Sp_{2l}$ and for any $x,y\in\Sp_{2l}$, ${}^xy=x^{-1}yx$.

Let $\widetilde{\Sp}_{2l}$ denote the metaplectic group, i.e., the double cover of $\Sp_{2l}$. We write its elements as pairs
$\langle g,\epsilon\rangle$ where $g\in\Sp_{2l}$ and $\epsilon=\pm1$. The action is realized
using the cocycle of Rao \cite{Rao}. We identify $\Sp_{2l}$ with its image under the
map $g\mapsto\langle g,1\rangle$ (this is not a homomorphism). Also if $Q<\Sp_{2l}$, $\widetilde{Q}$ denotes its inverse image in $\widetilde{\Sp}_{2l}$.

For $l=1$, the cocycle of Rao coincides with the cocycle of Kubota \cite{Kubota}:
\begin{align}\label{eq:Kubota formula}
\sigma(g,g')=\Big(\frac{\mathbf{x}(gg')}{\mathbf{x}(g)},\frac{\mathbf{x}(gg')}{\mathbf{x}(g')}\Big),
\qquad\mathbf{x}\left(\begin{smallmatrix}a&b\\c&d\end{smallmatrix}\right)=\begin{cases}c&c\ne0,\\d&c=0.\end{cases}
\end{align}
Here $(\cdot , \cdot)$ is the quadratic Hilbert symbol.

Consider the parabolic subgroup $Q_n$ of $\Sp_{2n}$. If we identify $\GL_n$ with the Levi part of $Q_n$ by $a\mapsto\diag(a,a^*)$, the covering
$\widetilde{\GL}_n$ we obtain is simple, in the sense that the cocycle is given by the quadratic Hilbert symbol. Consequently, its genuine representations are in bijection with the representations of $\GL_n$, which can be made explicit by fixing a Weil symbol. For any representation $\tau$ of $\GL_n$, the genuine representation
$\glnrep\otimes\gamma_{\psi}$ of $\widetilde{\GL}_n$ is defined by
$\glnrep\otimes\gamma_{\psi}(\langle a,\epsilon\rangle )=\epsilon\gamma_{\psi}(\det{a})\glnrep(a)$, where $a\in\GL_n$ and $\epsilon=\pm1$.

Let $H_{r}$ be the $(2r+1)$-dimensional Heisenberg group, realized as the group of matrices
\begin{align*}
\left\{(x,y,z)=\left(\begin{array}{cccc}1&x&y&z\\&I_r&&y'\\&&I_r&x'\\&&&1\end{array}\right)\in \Sp_{2(r+1)}\right\}.
\end{align*}
The subgroup obtained by taking $y=z=0$ (resp., $x=z=0$) is denoted $X_r$ (resp., $Y_r$).

As mentioned in the introduction, our main tool is the computation of Shimura type integrals for $\Sp_{2l}\times\GL_n$. These integrals were developed in \cite{GRS4}. The second named author defined the corresponding local $\gamma$-factors and established their fundamental properties in \cite{Kaplan2015}. For the construction in \cite{Kaplan2015}, the representation $\pi$ of
$\Sp_{2l}$ was assumed to afford a Whittaker model with respect to the character of $N_l$ given by
$u\mapsto \psi^{-1}(\sum_{i=1}^{l-1}u_{i,i+1}-\half u_{l,l+1})$. This was compatible with \cite{ILM}. Here
$\pi$ will afford a Whittaker model with respect to the character $u\mapsto \psi^{-1}(\sum_{i=1}^{l}u_{i,i+1})$, which was used in \cite[\S~4]{GRS4}. The changes between the versions are minor and were (partially) described in \cite[Remark~4.4]{Kaplan2015}. To be explicit, and because we correct certain typos from \cite{GRS4,RGS,Kaplan2015}, we provide a complete definition of the integrals, even though we will only be using the
$\Sp_{2l}\times\GL_1$ integrals, with $l>1$.

Let $\mathcal{S}(F^r)$  be the space of Schwartz--Bruhat functions on
the row space $F^r$. Define the Fourier transform $\widehat{\phi}$
of $\phi\in \mathcal{S}(F^r)$ by
\begin{align*}
&\widehat{\phi}(y)=\int_{F^r}\phi(x)\psi(2xJ_r\transpose{y})dx,
\end{align*}
where $y$ and $x$ are rows. The measure on $F^r$ is the product of Haar measures on $F$, each self dual with respect to $\psi_2$, then $\widehat{\widehat{\phi}}(y)=\phi(-y)$ (see e.g., \cite[\S~24]{BH06}).  We emphasize here that the only times we will use Haar measures self-dual with respect to $\psi_2$ are when dealing with the Fourier transform.  Otherwise, all other Haar measures are self-dual with respect to $\psi$ in this paper.

Let $\omega_{\psi}$ denote the Weil representation of $\widetilde{\Sp}_{2r}\ltimes H_r$, realized
on $\mathcal{S}(F^r)$. It satisfies the following formulas (see \cite{Rallis82} and \cite[Chap.~2]{BerndtSchmidt1998}\footnote{Note the typo in \cite[\S~6, (1.3)]{GRS4}, and also in \cite[(1.4)]{RGS} and \cite[\S~3.3.1]{Kaplan2015}, in their definition of the action of
$\omega_{\psi}(\langle diag(a,a^*),\epsilon\rangle)$: $\gamma_{\psi}$ should be $\gamma_{\psi}^{-1}$ as in \cite{Rallis82} or \cite[Chap.~2]{BerndtSchmidt1998}.}):
for $\phi\in \mathcal{S}(F^r)$,
\begin{align}\nonumber
&\omega_{\psi}((x,0,z))\phi(\xi)=\psi(z)\phi(\xi+x),\\\nonumber
&\omega_{\psi}((0,y,0))\phi(\xi)=\psi(2\xi J_{r}\transpose{y})\phi(\xi),\\
\label{eq:Weil rep formula diag a}
&\omega_{\psi}(\langle diag(a,a^*),\epsilon\rangle)\phi(\xi)=\epsilon\gamma_{\psi}^{-1}(\det{a})\absdet{a}^{\half}\phi(\xi a),\\
\label{eq:Weil rep formula on u}
&\omega_{\psi}(\langle \left(\begin{smallmatrix}I_{r}&u\\&I_{r}\end{smallmatrix}\right),\epsilon\rangle)\phi(\xi)=\epsilon\psi(\xi J_{r}\transpose{u}\transpose{\xi})\phi(\xi),\\ \label{eq:Weil rep formula Fourier}
&\widehat{\phi}(\xi)=\beta_{\psi}\omega_{\psi}(w_r)\phi(\xi).
\end{align}
Here $\beta_{\psi}$ is a certain fixed eighth root of unity.

Let $\pi$ and $\tau$ be a pair of generic representations of
$\Sp_{2l}$ and $\GL_{n}$ ($l,n\geq1$). Assume that $\pi$ is realized in its Whittaker model $\whittaker{\pi}{\psi^{-1}}$, where
$\psi(u)=\psi(\sum_{i=1}^{l}u_{i,i+1})$ ($u\in N_l$).
For $s\in\mathbb{C}$, consider the space $V(\tau,s)$ of the normalized induced representation
\begin{align*}
\mathrm{Ind}_{\widetilde{Q}_n}^{\widetilde{\Sp}_{2n}}((\tau\otimes\gamma_{\psi})|\det|^{s-1/2}).
\end{align*}
Here we use $\gamma_{\psi}$, instead of $\gamma_{\psi}^{-1}$ as in \cite{GRS4,Kaplan2015}, this is implied by the correction to
\eqref{eq:Weil rep formula diag a} (to keep the formulas for the symplectic and metaplectic cases of \cite{Kaplan2015} as stated there, one simply has to regard $\gamma_{\psi}$ as the inverse of the Weil factor). The elements of this space are regarded as smooth genuine functions on $\wt{\Sp}_{2n} \times \GL_n$, where for any $g \in \wt{\Sp}_{2n}$
and $b\in\GL_n$,
\begin{align*}
f_s(\left(\begin{smallmatrix}b&u\\&b^*\end{smallmatrix}\right)g, a) = \gamma_{\psi}(\det b) \delta_{Q_n}^{1/2}\left(\begin{smallmatrix}b&\\&b^*\end{smallmatrix}\right)|\det b|^{s-1/2} f_s(g,ab),
\end{align*}
and the mapping $a \mapsto f_s(g,a)$ belongs to $\mathcal{W}(\tau,\psi)$, where $\psi$ is the character
of the group of upper triangular unipotent matrices in $\GL_n$ given by
$z\mapsto \psi(\sum_{i=1}^{n-1}z_{i,i+1})$.

Let $W\in\whittaker{\pi}{\psi^{-1}}$, $\phi\in\mathcal{S}(F^{\min(l,n)})$ and
$f_s\in V(\tau,s)$. The Shimura type integral for $\pi\times\tau$ is defined by
\begin{align*}
\Psi(W,\phi,f_s)=\begin{dcases}\int_{\lmodulo{N_l}{Sp_{2{l}}}}\int_{\lmodulo{Y_{{l}}}{H_{{l}}}}
\int_{R_{{l},{n}}}W(g)\omega_{\psi}(hg)\phi(\xi_0)f_s(\gamma_{l,n} r \heisenembedding{h}g,I_{{n}})\,dr\,dh\,dg&{l}<{n},\\
\int_{\lmodulo{N_l}{Sp_{2{l}}}}W(g)\omega_{\psi}(g)\phi(\xi_0)f_s(g,I_{{l}})\,dg&{l}={n},\\
\int_{\lmodulo{N_n}{Sp_{2{n}}}}\int_{R^{{l},n}}\int_{X_{{n}}}W(\rconj{\omega_{{l}-{n},{n}}}(r\heisenembedding{x}g))\omega_{\psi}(g)\phi(x)f_s(g,I_{{n}})\,dx\,dr\,dg&{l}>{n}.
\end{dcases}
\end{align*}
Here
\begin{align*}
&R_{{l},{n}}
=\left\{\left(\begin{array}{cccccc}I_{{n}-{l}-1}&0&r_1&0&r_2&r_3\\
&1&&&&r_2'\\&&I_{{l}}&&&0\\&&&I_{{l}}&&r_1'\\&&&&1&0\\&&&&&I_{{n}-{l}-1}\\\end{array}\right)\in \Sp_{2{n}}\right\}\qquad({l}\geq0),\\
&\xi_0=(0,\ldots,0,1)\in F^{{l}}\qquad\text{(a row vector),}\\
&\gamma_{{l},{n}}=\left(\begin{array}{cccc}&2I_{{l}}\\&&&-I_{{n}-{l}}\\I_{{n}-{l}}\\&&\half I_{{l}}\end{array}\right)\qquad({n}\geq{l}\geq0),\\
&R^{{l},{n}}
=\left\{\left(\begin{array}{cccccc}I_{{l}-{n}-1}&0&r_1\\
&1&&&&\\&&I_{{n}}&&&\\&&&I_{{n}}&&r_1'\\&&&&1&0\\&&&&&I_{{l}-{n}-1}\\\end{array}\right)\in
\Sp_{2{l}}\right\},\\
&\omega_{{l}-{n},{n}}=\left(\begin{array}{cccc}&I_{{l}-{n}}\\I_{{n}}
\\&&&I_{{n}}\\&&I_{{l}-{n}}\end{array}\right).
\end{align*}
The integral is absolutely convergent for $\Real(s)\gg0$, and admits meromorphic continuation to $\C(q^{-s})$. Moreover, there is a choice of data for which it becomes a nonzero constant, for all $s$ (see \cite{GRS4}).

Set $m=\min({n},{l})$ and $m_0=\max({n},{l})$.
The integrals can be regarded as trilinear forms on
$\whittaker{\pi}{\psi^{-1}}\times
\mathcal{S}(F^{l})\times
V(\tau,s)$ satisfying the
following equivariance properties:
\begin{align}\label{eq:equivariance props}
\begin{cases}
\Psi(g\cdot W,(h\langle g,\epsilon\rangle)\cdot\phi,(vh\langle g,\epsilon\rangle)\cdot
f_s)=\psi^{-1}(v)\Psi(W,\phi,f_s)&{l}<{n},\\
\Psi(g\cdot W,\langle g,\epsilon\rangle\cdot\phi,\langle g,\epsilon\rangle\cdot f_s)=\Psi(W,\phi,f_s)&{l}={n},\\
\Psi((vhg)\cdot W,(h\langle g,\epsilon\rangle)\cdot\phi,\langle g,\epsilon\rangle\cdot
f_s)=\psi^{-1}(v)\Psi(W,\phi,f_s)&{l}>{n}.
\end{cases}
\end{align}
Here $g\in \Sp_{2m}$, $\epsilon=\pm1$, $h\in H_{m}$,
\begin{align*}
v=\left(\begin{smallmatrix}
z & u & v \\ & I_{2(m+1)} & u' \\ &  & z^* \end{smallmatrix}\right)<N_{m_0},\qquad
\psi(v)=\psi(\sum_{i=1}^{m_0-m-2}z_{i,i+1}+u_{m_0-m-1,1}).
\end{align*}
Let $M(\tau,s):V(\tau,s)\rightarrow V(\tau^*,1-s)$ be the standard intertwining operator, given by the meromorphic continuation of the following integral:
\begin{align*}
[\intertwining{\tau}{s}f_s](g,a)=\int_{U_{n}}f_s(w_{{n}}^{-1}ug,d_{{n}}a^*)du,
\end{align*}
where $g\in\widetilde{\Sp}_{2{n}}$, $a\in \GL_{{n}}$, and $d_{{n}}=diag(-1,1,\ldots,(-1)^{{n}})\in \GL_{{n}}$.
The measure $du$ is defined as follows. We fix a Haar measure $du$ on $F$, which is self dual with respect to $\psi$. Then we identify each root subgroup $U_{\alpha}$ of $U_n$ with $F$, and thereby define the measure $du$ on $U_{\alpha}$. To fix this identification, for the short (resp., long) roots $\alpha$ we map $u\in U_{\alpha}$ to $x\in F$, if the non-trivial coordinate of $u$ above (resp., on) the anti-diagonal is $x$. Now the measure $du$ in the formula for $\intertwining{\tau}{s}f_s$ is the product measure on $U_n$.

Note that by definition $V(\tau^*,1-s)$ is the space of the representation induced from $(\tau^*\otimes\gamma_{\psi})|\det|^{1/2-s}$, so that the genuine representation of $\widetilde{\GL}_n$ attached to $\tau^*$ is $\tau^*\otimes\gamma_{\psi}$. When $\tau$ is irreducible, $\tau^*$ is the representation contragredient to $\tau$.
The normalized intertwining operator is defined by $\nintertwining{\tau}{s}=C(s,\tau,\psi)\intertwining{\tau}{s}$, where
$C(s,\tau,\psi)$ is the analog of Shahidi's local coefficient, and is defined via the functional equation
\begin{align}\label{eq:shahidi func equation}
&\int_{U_n}f_s(d_{{n}}w_{{n}}u,I_{{n}})\psi(u_{{n},{n}+1})\,du
=C(s,\tau,\psi)\int_{U_n}[\intertwining{\tau}{s}f_s](d_{{n}}w_{{n}}u,I_{{n}})\psi(u_{{n},{n}+1})\,du.
\end{align}
Also let $c(s,l,\tau)=\tau(2I_n)^{-2}|2|^{-2n(s-\half)}$ if $l\geq n$, otherwise $c(s,l,\tau)=1$.
The only dependence on the measure in \eqref{eq:shahidi func equation} is through the definition of the measure for the intertwining operator, and a different choice would be reflected in a different constant $C(s,\tau,\psi)$.

The space of trilinear forms satisfying \eqref{eq:equivariance props} is, outside a finite set of values of $q^{-s}$, one-dimensional.
Hence the integrals $\Psi(W,\phi,f_s)$ and $\Psi(W,\phi,M^*(\tau,s)f_s)$ are proportional.
The $\gamma$-factor is defined by
\begin{align}\label{def:gamma}
\gamma(s,\pi\times\tau,\psi)=\pi(-I_{2l})^n\tau(-I_n)^l\gamma(s,\tau,\psi)c(s,l,\tau)\frac{\Psi(W,\phi,M^*(\tau,s)f_s)}
{\Psi(W,\phi,f_s)},
\end{align}
whenever $\Psi(W,\phi,f_s)$ is not identically $0$. For the minimal case of $l=0$ (i.e., $\Sp_{2l}$ is the trivial group),
we must define $\gamma(s,\pi\times\tau,\psi)=\gamma(s,\tau,\psi)$, essentially because the Langlands dual group of $\Sp_{2l}$ is $\SO_{2l+1}(\C)$. The $\gamma$-factor is independent of the choices of measures for the integrals and the intertwining operator (the latter choice is offset by $C(s,\tau,\psi)$).

\begin{rmk}\label{rmk:correction to Kaplan2015}
In \cite{Kaplan2015}, $\gamma(s,\pi\times\tau,\psi)$ was defined as the ratio
\begin{align*}
\Psi(W,\phi,M^*(\tau,s)f_s)/ \Psi(W,\phi,f_s),
\end{align*}
or
\begin{align*}
c(s,l,\tau)\Psi(W,\phi,M^*(\tau,s)f_s)/ \Psi(W,\phi,f_s)
\end{align*}
depending on the convention for the integrals (\cite[Remark~4.4]{Kaplan2015}). Then a normalized version $\Gamma(s,\pi\times\tau,\psi)$ was defined by
\begin{align*}
\Gamma(s,\pi\times\tau,\psi)=\pi(-I_{2l})^n\tau(-I_n)^l\gamma(s,\pi\times\tau,\psi).
\end{align*}
The purpose of this normalization was to simplify some of the multiplicative formulas and in particular, obtain equality with Shahidi's $\gamma$-factor (\cite[Corollary~1]{Kaplan2015}).

The case of $l=0$ for $\Sp_{2l}$ was overlooked in \cite{Kaplan2015} and in this case $\Gamma(s,\pi\times\tau,\psi)$ was defined to be trivial instead of $\gamma(s,\tau,\psi)$.  In the definition of $\Gamma(s,\pi\times\tau,\psi)$ \cite[p.~408]{Kaplan2015} in general, one must again multiply by $\gamma(s,\tau,\psi)$ as we do here in \eqref{def:gamma} (see also \cite{LR}).

This correction implies the following list of changes to \cite{Kaplan2015}:
\begin{itemize}
\item In the multiplicative formula (\textit{loc. cit.}, Theorem~1 (2)) when $\pi'$ is a representation of $\Sp_{0}$,
$\Gamma(s,\pi',\psi)=\gamma(s,\tau,\psi)$ appears on the right hand side of the identity.
\item We get the correct $L$-function when data are unramified (\textit{loc. cit.}, Theorem~1 (3)).
\item In the formula describing the dependence on $\psi$ (\textit{loc. cit.}, Theorem~1 (6)) $2l$ changes into $2l+1$, which is the expected dependence since $\Sp_{2l}$ naturally lifts to $\GL_{2l+1}$.
\item The minimal case (\textit{loc. cit.}, Theorem~1 (7)) is now $\gamma(s,\tau,\psi)$.
\end{itemize}
This modification only applies to $\Sp_{2l}$, and not to $\wt{\Sp}_{2l}$ (the dual group of $\wt{\Sp}_{2l}$ is taken to be $\GL_{2l}(\C)$).

In this work we defined the $\gamma$-factor directly as the normalized one \eqref{def:gamma}, so we do not use the convention $\Gamma(\cdots)$, but \eqref{def:gamma} should be regarded as the corrected version of $\Gamma(s,\pi\times\tau,\psi)$ from \cite{Kaplan2015}. In particular,
the $\gamma$-factor $\gamma(s,\pi\times\tau,\psi)$ from \eqref{def:gamma} is equal to Shahidi's $\gamma$-factor defined in \cite{Sh3}.
\end{rmk}

We have the following twisting formula: for $a \in F^{\times}$, if
$W(\pi,\psi_a^{-1})\ne0$,
\begin{align}\label{twisting}
\gamma(s,\pi\times\tau,\psi_a)= \tau(a I_n)^{2l+1} |a|^{(2l+1)n(s-1/2)}  \gamma(s,\pi\times\tau,\psi).
\end{align}
Indeed if $a$ is a square, this was proved in \cite[Theorem~1 (6)]{Kaplan2015} (with the correction described in the remark above); in general this follows because $\gamma(s,\pi\times\tau,\psi_a)$ is identical with Shahidi's corresponding $\gamma$-factor \cite[Corollary~1]{Kaplan2015}. Note that a direct verification of \eqref{twisting} when $l=m$ (and $a$ is not a square) appeared in \cite[Proposition~3.5]{Zhang2016}.

As mentioned above, in this work $l>1$ and $n=1$. Then $\tau$ is a quasi-character of $F^{\times}$ and the Shimura type integral for $\pi\times\tau$ is defined by
\begin{align}\label{int:Shimura n=1}
\Psi(W,\phi,f_s)=\int_{N_1\backslash Sp_{2}}\int_{R^{l,1}}\int_{X_{1}}W({}^{\omega_{l-1,1}}(rxg))\omega_{\psi}(g)\phi(x)f_s(g,1)\ dx\ dr\ dg,
\end{align}
with
\begin{align*}
&R^{l,1}
=\left\{\left(\begin{array}{cccccc}I_{l-2}&0&r_1\\
&1&&&&\\&&1&&&\\&&&1&&r_1'\\&&&&1&0\\&&&&&I_{l-2}\\\end{array}\right)\in
\Sp_{2l}\right\},\qquad\omega_{l-1,1}=\left(\begin{array}{cccc}&I_{l-1}\\1\\&&&1\\&&I_{l-1}\end{array}\right).
\end{align*}
For certain computations, we will need to use the precise value of the normalizing factor $C(s,\tau,\psi)$ of the intertwining operator.
This value was computed in \cite{Sweet95,Szpruch11,SzpGol15}, albeit with slightly different conventions. Specifically, in \cite{Szpruch11} the character appearing on both sides of \eqref{eq:shahidi func equation} was
$\psi^{-1}$, the representation $\tau$ of $\GL_1$ was identified with a genuine representation of $\wt{\GL}_1$ by
$\tau\otimes\gamma_{\psi^{-1}}$, and the intertwining operator was given by $\int_{U_{1}}f_s(w_{{1}}u)\,du$. Under these conventions
the normalizing factor $C(s, \tau, \psi)$ was shown in \cite{Szpruch11} to be equal to the principal value of
\begin{align*}
\int_{F^{\times}}\tau(u)|u|^s\gamma_{\psi}(u)^{-1}\psi(u)|u|^{-1}du.
\end{align*}
(The measure $du$ is self dual with respect to $\psi$.)
According to Sweet \cite{Sweet95} (see \cite[Appendix]{SzpGol15}), this integral equals
\begin{align*}
\gamma(\psi_{-1})\tau(-1)\frac{\gamma(2s-1,\tau^2,\psi_2)}{\gamma(s,\tau,\psi)}.
\end{align*}
Since
\begin{align*}
[\intertwining{\tau}{s}f_s](g,1)=\int_{U_{1}}f_s(w_{{1}}^{-1}ug,d_1)\,du
=\gamma_{\psi}(-1)^{-1}\int_{U_{1}}f_s(w_{{1}}ug,1)\,du
\end{align*}
and $\psi_{-1}=\psi^{-1}$,
\begin{align}\label{C factor}
C(s,\tau,\psi)=\gamma_{\psi}(-1)\gamma(\psi)\tau(-1)\frac{\gamma(2s-1,\tau^2,\psi_2^{-1})}{\gamma(s,\tau,\psi^{-1})}=
\gamma_{\psi}(-1)\gamma(\psi)\frac{\gamma(2s-1,\tau^2,\psi_2)}{\gamma(s,\tau,\psi)}.
\end{align}
In particular, we can rewrite \eqref{def:gamma} for $n=1$ in the form
\begin{align}\label{def2:gamma}
\gamma(s,\pi\times\tau,\psi)=\pi(-I_{2l})\tau(-1)^l c(s,l,\tau)  \gamma_{\psi}(-1)\gamma(\psi)\gamma(2s-1,\tau^2,\psi_2)\frac{\Psi(W,\phi,M(\tau,s)f_s)}
{\Psi(W,\phi,f_s)}.
\end{align}
We will also use the fact that $\gamma(2s-1,\tau^2,\psi)$ has a pole at $s=1$ if and only if $\tau$ is quadratic (a simple computation).

\subsection{The simple supercuspidal representations of $\Sp_{2l}$}\label{simplesupercuspidaldefinition}
The simple supercuspidal representations of $\Sp_{2l}$ are those with smallest nonzero depth $\frac{1}{2l}$.
We briefly recall their construction. Let $K=\Sp_{2l}(\mathfrak{o})$, $T_0 = T_l \cap K$, $Z$ be the (finite) center of $\Sp_{2l}$,
and $I$ be the standard Iwahori subgroup, which is the preimage in $K$ of the Borel subgroup $B_l(\kappa_F)$. Also define
the subgroup $I^+$ of $I$, by replacing $\mathfrak{o}^{\times}$ with $1+\mathfrak{p}$ on the diagonal.

Additionally denote
\begin{align*}
T(q) = \{ t \in T_0| t^q=t\},\qquad Z(q)=Z \cap T(q).
\end{align*}
Fix an additive character $\psi^*$ of $F$ whose level is $1$. For any $\alpha\in F^{\times}$,
consider the following character $\psi^{*,\alpha}$ of $N_{l}$, given by
$\psi^{*,{\alpha}}(u)=\psi^*(\sum_{i=1}^{l-1}u_{i,i+1}+\alpha u_{l,l+1})$.

Following the works of Gross and Reeder \cite{GR10}, and Reeder and Yu \cite{RY14},
the affine generic characters
$\chi:ZI^+\rightarrow\mathbb{C}^*$ are of the form
\begin{align}\label{agc}
\begin{split}
\chi(zk)=\chi_{\underline{t}}^{\omega}(zk)=\omega(z)\psi^*(\sum_{i=1}^l t_i k_{i,i+1} + t_{l+1} \varpi^{-1}k_{2l,1}) \qquad (z\in Z,\,k\in I^+),\end{split}
\end{align}
where $\omega$ is a character of $Z$ and $\underline{t}=(t_1, \ldots, t_{l+1})$ with $t_i\in\mathfrak{o}^{\times}$. Since the level of $\psi^*$ is $1$, replacing any $t_i$ with $t_i+\varpi$ will produce the same character
$\chi_{\underline{t}}^{\omega}$. Therefore, with a minor abuse of notation we can already take $t_i\in\kappa_F^{\times}$.

In fact, a complete set of representatives of the $T(q)$-orbits of affine generic characters of $ZI^+$ is given by
$\chi_{\underline{t}}^{\omega}$,
$\underline{t}=(1,\ldots, 1,\alpha,t)$ where $\alpha$ varies over the square classes in $\kappa_F^{\times}$ and $t\in\kappa_F^{\times}$. Denote such a
character by $\chi_{\alpha, t}^{\omega}$.
By \cite[Proposition 9.3]{GR10}, the simple supercuspidal representations of $\Sp_{2l}$ are thus parameterized by the choices of central character $\omega$, $t \in \kappa_F^{\times}$ and $\alpha \in \kappa_F^{\times} / (\kappa_F^{\times})^2$.  We denote this representation by $\pi^{\omega}_{\alpha, t}$,
\begin{align*}
\pi^{\omega}_{\alpha, t}=\mathrm{ind}_{ZI^+}^{\Sp_{2l}} \chi_{\alpha, t}^{\omega}.
\end{align*}
Here $\mathrm{ind}$ denotes the compact induction.

Instead of using the above parametrization with $\omega$, $\alpha$ and $t$, we set $t = 1$ and let the affine generic characters be parameterized by central characters $\omega$, $\alpha \in \kappa_F^{\times} / (\kappa_F^{\times})^2$ and the various choices of uniformizer $\varpi$ in $F$. Since we have already fixed an (arbitrary) uniformizer $\varpi$ at the beginning of Section~\ref{subsection:The groups and the Shimura integral}, given $(\omega,\alpha)$ we denote the corresponding affine generic character and simple supercuspidal representation by
$\chi_{\alpha}^{\omega}$ and $\pi_{\alpha}^{\omega}$.

It is easy to see that any representation $\pi_{\alpha}^{\omega}$ is $\psi^{*,\alpha}$-generic.
Moreover, the following is a Whittaker function in $W(\pi_{\alpha}^{\omega},\psi^{*,\alpha})$:
\begin{align*}
W_{\alpha, \omega}(g) = \left\{
\begin{array}{rll}
\psi^{*,\alpha}(u) \chi_{\alpha}^{\omega} (zk) & \text{if }  g = uzk \in N_l Z I^+,\\
0 &  \text{otherwise.}
\end{array} \right.
\end{align*}

\subsection{Computations on $\overline{B}_1$}
For the computation of the integrals, it will be convenient to rewrite the $dg$-integral over
the Borel subgroup $\overline{B}_1$ of lower triangular matrices in $\Sp_2$. In this section we work out several
auxiliary computations that will be used repeatedly below.

For $a,c\in F^{\times}$, put
\begin{align}\label{b}
b=\begin{pmatrix}         a &  \\       a^{-1}c & a^{-1} \end{pmatrix}.
\end{align}
Note that the assumption $c\ne0$ excludes a subset of $\overline{B}_1$ of zero measure, hence will not affect our computations.
Recall that we identify $\Sp_2$ with its image in $\wt{\Sp}_2$ under $g\mapsto \langle g,1\rangle$. Here
\begin{align}\label{eq:decomposing b in the cover}
\langle b,1\rangle = (a^{-1},c)\langle \begin{pmatrix}         a &  \\        & a^{-1} \end{pmatrix},1\rangle \langle
\begin{pmatrix}         1 &  \\       c & 1 \end{pmatrix}
,1\rangle .
\end{align}
\begin{lem}\label{lemma:computation weil}
Let $\varphi\in\mathcal{S}(F)$ and $b$ be given by \eqref{b}. Then
\begin{align*}
&\omega_{\psi}(b) \phi(x)=(a^{-1}, c) \beta_{\psi}^{-2}\gamma_{\psi}^{-1}(a)\gamma_{\psi}(-1)
\int_F\psi(2axy)\psi(-cy^2)\int_F{\phi}(z) \psi(-2yz)
\ dz\ dy.
\end{align*}
Here $dz$ and $dy$ are self dual with respect to $\psi_2$.
\end{lem}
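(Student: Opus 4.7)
The plan is to apply the explicit Weil representation formulas \eqref{eq:Weil rep formula diag a}--\eqref{eq:Weil rep formula Fourier} to the two-factor decomposition \eqref{eq:decomposing b in the cover} of $\langle b, 1\rangle$, tracking the cocycles via Kubota's formula \eqref{eq:Kubota formula}. The cocycle $(a^{-1},c)$ appearing in \eqref{eq:decomposing b in the cover} already accounts for the Hilbert symbol that prefixes the answer, so the remaining task is to compute
$\omega_\psi(\langle\mathrm{diag}(a,a^{-1}),1\rangle)$ and
$\omega_\psi(\langle\left(\begin{smallmatrix}1&\\c&1\end{smallmatrix}\right),1\rangle)$ separately.

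For the diagonal factor, \eqref{eq:Weil rep formula diag a} gives immediately that $\omega_\psi(\langle\mathrm{diag}(a,a^{-1}),1\rangle)$ acts by $g(\xi)\mapsto \gamma_\psi^{-1}(a)|a|^{1/2}g(a\xi)$, so I would first reduce the problem to computing $\omega_\psi$ on the lower unipotent $n_-=\left(\begin{smallmatrix}1&\\c&1\end{smallmatrix}\right)$. For this I would write $n_-$ in the Bruhat form $n_- = w_1^{-1}\, n\, w_1$ with $n=\left(\begin{smallmatrix}1&-c\\&1\end{smallmatrix}\right)$, and apply \eqref{eq:Weil rep formula on u} followed by \eqref{eq:Weil rep formula Fourier}. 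Explicitly, $\omega_\psi(w_1)\phi$ is a constant multiple of the Fourier transform $\widehat{\phi}$, then multiplication by $\psi(-c\xi^2)$ implements $\omega_\psi(n)$, and $\omega_\psi(w_1^{-1})$ is (up to the constant $\beta_\psi$) the inverse Fourier transform. The composition is the sought double integral, with the constant out front involving $\beta_\psi^{-1}\cdot\beta_\psi$ from the two Fourier factors.

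The bookkeeping step is to compute the three Kubota cocycles from \eqref{eq:Kubota formula}: the one between $w_1^{-1}$ and $n$, the one between $w_1^{-1}n$ and $w_1$, and the one between $\mathrm{diag}(a,a^{-1})$ and $n_-$ implicit in \eqref{eq:decomposing b in the cover}. These are computed using the $\mathbf{x}$-values of the relevant matrices ($\mathbf{x}(w_1)=-1$, $\mathbf{x}(n_-)=c$, $\mathbf{x}(\mathrm{diag}(a,a^{-1}))=a^{-1}$, etc.). The pair $(\beta_\psi^{-2},\gamma_\psi(-1))$ that appears in the stated formula is a consequence of the general identity $\beta_\psi^2=\gamma_\psi(-1)$ (which itself follows by applying $\omega_\psi$ to $w_1^2=-I_2$ and comparing with \eqref{eq:Weil rep formula diag a} for $a=-1$); keeping them separately written in the lemma is useful for later manipulations.

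The main obstacle is therefore not the analytic content, which is routine Fourier-transform and Gaussian manipulation, but the Hilbert-symbol bookkeeping: a sign error in any of the $\mathbf{x}$-values would perturb the prefactor by a Hilbert symbol and could make the answer look $a$- or $c$-dependent in the wrong way. Once the three cocycles are computed correctly and combined with the cocycle in \eqref{eq:decomposing b in the cover}, the remaining Hilbert-symbol contributions collapse to the single factor $(a^{-1},c)$, and a change of variables $y\mapsto -y$ in the outer Fourier integral puts the result into the form stated in the lemma.
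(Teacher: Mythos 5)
Your proposal is correct and follows essentially the same route as the paper: decompose $\langle b,1\rangle$ via \eqref{eq:decomposing b in the cover} into a torus part and a lower unipotent $n_-$, apply \eqref{eq:Weil rep formula diag a} to the former, and conjugate $n_-$ to an upper unipotent by a Weyl element so that \eqref{eq:Weil rep formula on u} and \eqref{eq:Weil rep formula Fourier} can be used, keeping track of Kubota cocycles throughout. The one small divergence is that you conjugate as $n_- = w_1^{-1}\,n\,w_1$ and handle $\omega_\psi(\langle w_1^{-1},1\rangle)$ directly as $\beta_\psi$ times the inverse Fourier transform; the paper instead conjugates as $n_- = w_1\,n\,w_1^{-1}$, and then, rather than invoking an inverse-transform formula, rewrites the inner $\langle w_1^{-1},1\rangle$ as $(-1,-1)\langle -I_2,1\rangle\langle w_1,1\rangle$ so that only the forward Fourier transform and the action of $-I_2$ from \eqref{eq:Weil rep formula diag a} are needed. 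Both ways make the two nontrivial Kubota cocycles inside the conjugation vanish and produce, up to the factor $(a^{-1},c)$ from \eqref{eq:decomposing b in the cover}, a prefactor equal to $1$, which the paper then writes in the form $(-1,-1)\beta_\psi^{-2}\gamma_\psi^{-1}(-1) = \beta_\psi^{-2}\gamma_\psi(-1)$. Your identity $\beta_\psi^2 = \gamma_\psi(-1)$ and the remark that the double integral acquires the stated signs after $y\mapsto -y$ are both correct. (One purely cosmetic point shared by your write-up and the paper's: the general Weil formula also carries a $|a|^{1/2}$, which is silently $1$ since the lemma is only applied with $a\in 1+\mathfrak{p}$.)
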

\begin{proof}
Using \eqref{eq:Kubota formula} together with
\eqref{eq:Weil rep formula diag a}--\eqref{eq:Weil rep formula Fourier}
we see that
\begin{align*}
 & \omega_{\psi}(b)\phi(x) = \ (a^{-1}, c)
\gamma_{\psi}^{-1}(a) \omega_{\psi}(\begin{pmatrix}
1 & 0\\
c & 1
\end{pmatrix})  \phi(ax) \\
&= (a^{-1}, c)
\gamma_{\psi}^{-1}(a) \omega_{\psi}(\langle w_1,1 \rangle
\langle \begin{pmatrix}
1 & -c\\
0 & 1
\end{pmatrix},1\rangle
\langle w_1^{-1},1\rangle)  \phi(ax) \\
&= \beta_{\psi}^{-1} (a^{-1}, c)
\gamma_{\psi}^{-1}(a) (\omega_{\psi}(\langle\begin{pmatrix}
1 & -c\\
0 & 1
\end{pmatrix},1\rangle
\langle w_1^{-1},1\rangle) {\phi})^{\widehat{}} (ax) \\
&= \beta_{\psi}^{-1} (a^{-1}, c)
\int_F
\gamma_{\psi}^{-1}(a) \omega_{\psi}(\langle\begin{pmatrix}
1 & -c\\
0 & 1
\end{pmatrix},1\rangle
\langle w_1^{-1},1\rangle) {\phi}(y) \psi(2axy) d y \\
&=  (-1,-1) \cdot \beta_{\psi}^{-1} (a^{-1}, c)
\int_F \gamma_{\psi}^{-1}(a)
\psi(-cy^2)
\omega_{\psi}(\langle -I_2,1\rangle\langle w_1,1\rangle) {\phi}(y) \psi(2axy) d y  \\
&=  (-1,-1) \cdot \beta_{\psi}^{-2} \gamma_{\psi}^{-1}(a) \gamma_{\psi}^{-1}(-1)  (a^{-1}, c)
\int_F   \psi(2axy)
\psi(-cy^2)
\int_F
{\phi}(z) \psi(-2yz) d z  d y.
\end{align*}
Note that we used
$\langle -I_2w_1,1\rangle = (-1,-1)\langle -I_2,1\rangle\langle w_1,1\rangle$, one equality before the last.
\end{proof}

\begin{lem}\label{lemma:computation M tau on b}
Let $f_s\in V(\tau,s)$ and $b$ be given by \eqref{b}, with $a\in 1+\mathfrak{p}$ and $c\ne0$. Then
\begin{align*}
[M(\tau,s) f_s](b, 1) =  \mathrm{vol}(\mathfrak{o}^{\times})\int_{F^{\times}}  (-uc,a)\tau(u^{-1}) |u|^{\frac{1}{2}-s}  \gamma_{\psi}^{-1}(u^{-1} a) f_s \left(
\begin{pmatrix}
1 & 0\\
a^2 u^{-1} +c & 1
\end{pmatrix}, -1\right)\, d^{\times} u.
\end{align*}
Here $\mathrm{vol}(\mathfrak{o}^{\times})$ is the volume of $\mathfrak{o}^{\times}$ with respect to $du$, and $d^{\times}u$ is the multiplicative measure of $F^{\times}$ assigning the volume $1$ to $\mathfrak{o}^{\times}$.
If $c=0$, the same formula holds except that the sign $(-uc,a)$ is replaced by $(au,a)$.
\end{lem}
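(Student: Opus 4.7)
The plan is to unfold the intertwining operator by definition,
\begin{align*}
[M(\tau,s)f_s](b,1)=\int_F f_s\!\left(\langle w_1^{-1}\begin{pmatrix}1 & u\\ & 1\end{pmatrix}b,\,1\rangle,\,-1\right)du,
\end{align*}
and reduce the integrand, via Iwasawa decomposition, to $f_s$ evaluated on the lower-triangular matrix $\begin{pmatrix}1 & 0\\ a^2u^{-1}+c & 1\end{pmatrix}$ times explicit scalars. A direct matrix product gives
\begin{align*}
w_1^{-1}\begin{pmatrix}1 & u\\ & 1\end{pmatrix}b=\begin{pmatrix}-a^{-1}c & -a^{-1}\\ a+a^{-1}uc & a^{-1}u\end{pmatrix},
\end{align*}
which, for $c\ne 0$ and generic $u$, decomposes as
\begin{align*}
\diag(au^{-1},a^{-1}u)\cdot\begin{pmatrix}1 & -u/a^2\\ & 1\end{pmatrix}\cdot\begin{pmatrix}1 & 0\\ a^2u^{-1}+c & 1\end{pmatrix}.
\end{align*}
I then lift this equality to $\wt{\Sp}_2$ using Kubota's formula \eqref{eq:Kubota formula}, tracking the Rao-cocycle correction as a product of quadratic Hilbert symbols in $a$, $c$, $u$ and $a^2+uc$.

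Next, I apply the equivariance of $f_s$ under $\wt{Q}_1$ to the Siegel factor $\diag(au^{-1},a^{-1}u)\cdot\begin{pmatrix}1 & -u/a^2\\ & 1\end{pmatrix}$. This pulls out $\gamma_\psi(au^{-1})|au^{-1}|^{s+1/2}$ and shifts the second $\GL_1$-argument of $f_s$ from $-1$ to $-au^{-1}$; one more use of $f_s(g,a_0)=\tau(a_0)f_s(g,1)$ turns this into $\tau(au^{-1})\cdot f_s\!\left(\begin{pmatrix}1 & 0\\ a^2u^{-1}+c & 1\end{pmatrix},-1\right)$. The central Hilbert-symbol identity needed is
\begin{align*}
(a^2+uc,\,-uc)=1,
\end{align*}
which follows from the Steinberg relation $(y,1-y)=1$ at $y=-uc/a^2$ together with the invariance of the symbol under multiplication by squares. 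Combining this with $\gamma_\psi(xy)=\gamma_\psi(x)\gamma_\psi(y)(x,y)$, $\gamma_\psi(x^{-1})=\gamma_\psi(x)^{-1}$ and $\gamma_\psi(x)^2=(x,-1)$ collapses the accumulated cocycles and Weil factors to precisely $(-uc,a)\,\gamma_\psi^{-1}(u^{-1}a)$.

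Finally, the change of measure $du=|u|\vol(\mathfrak{o}^\times)\,d^\times u$ produces the prefactor $\vol(\mathfrak{o}^\times)$ and combines $|au^{-1}|^{s+1/2}\cdot|u|=|u|^{1/2-s}$, using $|a|=1$. For the case $c=0$, the element $b$ already lies in the Siegel Levi, so no analogue of \eqref{eq:decomposing b in the cover} is needed; the Iwasawa specialises by setting $c=0$, the bottom-left entry of the lower-triangular factor becomes $a^2u^{-1}$, and the cocycle computation runs along the same lines but now yields $(au,a)$ in place of $(-uc,a)$, which is the substitution recorded in the lemma. The main obstacle is the careful bookkeeping of every Hilbert symbol emerging from the Rao cocycle and its cancellation against the Weil-factor contributions pulled out by the Siegel equivariance; once the identity $(a^2+uc,-uc)=1$ is in place, the assertion follows mechanically.
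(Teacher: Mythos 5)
Your approach is essentially the paper's: both unfold $M(\tau,s)$, compute $w_1^{-1}ub$, extract an upper‑triangular factor to land on the lower unipotent $\left(\begin{smallmatrix}1&0\\a^2u^{-1}+c&1\end{smallmatrix}\right)$, and then cancel the accumulated Kubota/Rao signs against the Weil-factor contributions using the Steinberg relation in the form $(-uc,a^2+uc)=1$ together with $(z,-z)=1$ and $\gamma_\psi(x)^2=(x,-1)$. The only cosmetic difference is that you pass directly through a $\diag\cdot U_1\cdot\overline{U}_1$ Iwasawa factorization, while the paper first absorbs the $U_1$ factor by left-invariance of $f_s$ and then decomposes the remaining lower-triangular matrix — these yield the same bookkeeping, and your sketch correctly identifies the key cancellation, the measure change $du=|u|\mathrm{vol}(\mathfrak{o}^\times)\,d^\times u$, and the $c=0$ modification.
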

\begin{proof}
By definition, for any $g\in \widetilde{\Sp}_{2}$,
\begin{align*}
[M(\tau,s) f_s](g, 1)  = \int_{U_1} f_s(w_1^{-1} u g, -1) du.
\end{align*}
Let
$u =
\left(\begin{smallmatrix}
1 & u\\
 & 1
\end{smallmatrix}\right)$. First assume $c\ne0$. Then
\begin{align}\label{eq:decomp 1 sign}
\langle w_1^{-1},1\rangle\langle u,1 \rangle\langle b,1\rangle =
\left\langle \begin{pmatrix}
-a^{-1} c & -a^{-1}\\
a +uca^{-1} & ua^{-1}
\end{pmatrix}, (-ac, a + ua^{-1} c) \right\rangle
\end{align}
as long as $a + ua^{-1} c \neq 0$ (when $a + ua^{-1} c=0$, the sign is $(ua,uc)$). Since $a$ and $c$ are given, we may assume that this condition holds, since it excludes only one value of $u$ (i.e., $cu\ne-a^2$).
Then,
\begin{align*}
[M(\tau,s) f_s](b, 1)  = \int_{F} (-ac, a + ua^{-1} c) f_s \left(
\begin{pmatrix}
-a^{-1} c & -a^{-1}\\
a +uca^{-1} & ua^{-1}
\end{pmatrix}, -1\right) du.
\end{align*}
As above we may assume $u \neq 0$. Since $f_s$ is left-invariant by $U_1$, we may row reduce upwards to obtain
\begin{align*}
[M(\tau,s)f_s](b, 1)  = \int_{F^{\times}} (-ac, a + ua^{-1} c) f_s \left(
\begin{pmatrix}
u^{-1} a & 0\\
a + uca^{-1} & ua^{-1}
\end{pmatrix}, -1\right)\, du.
\end{align*}
Recall that $du$ is a Haar measure on $F$, not $F^{\times}$.  We change the additive measure $du$ to the multiplicative one $d^{\times}u$, then obtain
\begin{align*}
[M(\tau,s)f_s]&(b,1) \\= & \mathrm{vol}(\mathfrak{o}^{\times}) \int_{F^{\times}} (-ac, a + ua^{-1} c) (ua^{-1}, a^2 u^{-1} + c) \cdot (-ua^{-1}(a^2 u^{-1} + c), a + uca^{-1})  \\
& \ \times  f_s \left(
\langle\begin{pmatrix}
u^{-1} a & 0\\
0 & u a^{-1}
\end{pmatrix},1\rangle
\langle\begin{pmatrix}
1 & 0\\
a^2 u^{-1} +c & 1
\end{pmatrix},1\rangle
, -1\right) |u|\, d^{\times} u\\
= & \mathrm{vol}(\mathfrak{o}^{\times}) \int_{F^{\times}} (-ac, a + ua^{-1} c) (ua^{-1}, a^2 u^{-1} + c) \cdot (-ua^{-1}(a^2 u^{-1} + c), a + uca^{-1})\\
& \ \times  \tau(u^{-1}) |u|^{\frac{1}{2}-s}  \gamma_{\psi}(u^{-1} a) f_s \left(
\begin{pmatrix}
1 & 0\\
a^2 u^{-1} +c & 1
\end{pmatrix}, -1\right)\, d^{\times} u,
\end{align*}
since $\tau$ is tamely ramified and hence $\tau(a)=1$ for $a \in 1 + \mathfrak{p}$.  Note that
\begin{align*}(-ua^{-1}(a^2 u^{-1} + c), a + uca^{-1}) = (-a - uca^{-1}, a + uca^{-1}) = 1,
\end{align*}
 since $(z, -z) = 1$ for any $z \in F^{\times}$.  Moreover, using $(z,1-z)=1$, one computes that
 \begin{align*}(-ac, a + ua^{-1} c) (ua^{-1}, a^2 u^{-1} + c) = (-uc, a)(ua^{-1}, -1).\end{align*}
The required formula now follows using $\gamma_{\psi}(u^{-1} a) = \gamma_{\psi}^{-1}(u^{-1} a) (-1, u^{-1} a)$.

When $c=0$ we compute as above, the only difference is in the signs: now there is no sign in
\eqref{eq:decomp 1 sign}, and when we decompose $\left(\begin{smallmatrix}u^{-1}a&0\\a&ua^{-1}\end{smallmatrix}\right)$ in the computation of
$[M(\tau,s)f_s](b,1)$, the sign is $(au,a)(ua^{-1}, -1)$ (instead of $(-uc, a)(ua^{-1}, -1)$).
\end{proof}
\section{The computation of the $\gamma$-factor}\label{section:The computation of the gamma-factor}
We study the $\gamma$-factor using a direct computation of integrals.
We begin by defining the data for the computation. Fix an additive character
$\psi$ of $F$, whose level is $1$. As in Section~\ref{subsection:The groups and the Shimura integral}, this defines the character $\psi^{-1}$ of $N_l$.

Using the notation of Section~\ref{simplesupercuspidaldefinition}, put $\psi^*=\psi^{-1}$ and $\alpha=1$.
Throughout, we will set $\pi = \pi_1^{\omega}$ and $\chi = \chi_1^{\omega}$. Now $\pi$ admits the Whittaker model
$W(\pi,\psi^{-1})$, where $\psi(u)=\psi(\sum_{i=1}^{l}u_{i,i+1})$ ($u\in N_l$).
Our Whittaker function is defined by
\begin{align*}
W(g) = \left\{
\begin{array}{rll}
\psi^{-1}(u) \omega(z) \chi(k) & \text{if} & g = uzk \in N_l Z I^+, \\
0 &  & \text{otherwise.}
\end{array} \right.
\end{align*}

Recall from Section~\ref{subsection:The groups and the Shimura integral} that we defined the measure $du$, which is self dual with respect to
the character $\psi$. For a measurable subset of $F$, $\mathrm{vol}(\cdots)$ will denote its measure assigned by $du$. Since $\psi$ is now of level $1$, $\mathrm{vol}(\frak{o})=q^{\frac{1}{2}}$ (see e.g., \cite[\S~23]{BH06}).  Also fix the measure $d^{\times}u$ as in
Lemma~\ref{lemma:computation M tau on b}, then $\mathrm{vol}^{\times}(\mathfrak{o}^{\times})=1$, where $\mathrm{vol}^{\times}(\cdots)$ is the measure assigned by $d^{\times}u$.  For computations, the measure $dx$ (resp., $dr$) of $X_{1}$ (resp., $R^{l,1}$) appearing in the integral \eqref{int:Shimura n=1} will be the measure (resp., product of measures) $du$.

\subsection{The case when $p \neq 2$}\label{pneq2section}
In this section, we compute $\gamma(s, \pi \times \tau, \psi)$ when $\tau$ is quadratic and tamely ramified. This implies in particular that the restriction of $\tau$ to $\mathfrak{o}^{\times}$ coincides with either $\gamma_{\psi}$ or $(\varpi,\cdot)\gamma_{\psi}$.
Most of the computations do not require $\tau$ to be quadratic, but this condition will eventually be used.

\begin{rmk}
Since $p>2$, there are only two classes in $\kappa_F^{\times} / (\kappa_F^{\times})^2$. At the end of this section, we will see that the result for the non-trivial class can be easily deduced from the case $\alpha=1$.
\end{rmk}

The Whittaker function $W$ was defined in Section~\ref{section:The computation of the gamma-factor}. To define $f_s$, consider the subgroup
\begin{align*}
\mathcal{N}=\left\{\left(\begin{array}{cc}1+\mathfrak{p}&\mathfrak{p}\\\mathfrak{p}^2&1+\mathfrak{p}\end{array}\right)\in\Sp_2\right\}.
\end{align*}
Since $p>2$, according to \cite[Theorem~2]{Kubota2} the section $\langle x,\vartheta(x)\rangle$ is a splitting (a homomorphism) of $\Sp_{2}(\mathfrak{o})$ where
\begin{align*}
\vartheta\left(\begin{smallmatrix}a&b\\c&d\end{smallmatrix}\right)=
\begin{cases}1& c=0 \text{ or } |c|=1,\\
(c,d)&\text{otherwise.}
\end{cases}
\end{align*}
In particular it is a splitting of $\mathcal{N}$. However, because $1 + \mathfrak{p} \subset (F^{\times})^2$, $\vartheta$ is trivial on $\mathcal{N}$, thus the trivial section $g\mapsto\langle g,1\rangle$ is a homomorphism of $\mathcal{N}$.

We define $f_s$ to be the function supported on $\wt{B}_1\mathcal{N}$, such that
\begin{align*}
f_s(\langle\left(\begin{smallmatrix}b&u\\&b^{-1}\end{smallmatrix}\right),\epsilon\rangle \langle v,1\rangle, a)=\epsilon|b|^{s+1/2}\gamma_{\psi}(b)\tau(ba),\qquad v\in\mathcal{N}.
\end{align*}
Also let $\phi$ be the characteristic function of $\mathfrak{p}$.

\begin{lem}\label{lemma:computation Psi with p not 2}
$\Psi(W, \phi, f_s) =  \beta_{\psi}^{-2} \gamma_{\psi}^{-1}(-1)  \mathrm{vol}(\mathfrak{p})^{l} \mathrm{vol}( \mathfrak{o}) \mathrm{vol}^{\times}(1 + \mathfrak{p})\mathrm{vol}(\mathfrak{p}^2)$.
\end{lem}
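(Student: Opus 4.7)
The plan is to compute $\Psi(W, \phi, f_s)$ directly by extracting the support conditions imposed by each of the three data. First I parameterize $g \in N_1 \backslash \Sp_2$ by the lower triangular matrix $b$ of \eqref{b} via the Bruhat big cell, so that the outer $dg$ can be written as $d^\times a\, dc$ on $F^\times \times F$. Using the decomposition $b = \diag(a, a^{-1}) \left(\begin{smallmatrix}1&0\\c&1\end{smallmatrix}\right)$ together with \eqref{eq:decomposing b in the cover} and the fact that the trivial section is a homomorphism on $\mathcal{N}$, the condition $b \in B_1 \mathcal{N}$ reduces to $\left(\begin{smallmatrix}1&0\\c&1\end{smallmatrix}\right) \in \mathcal{N}$, which holds iff $c \in \mathfrak{p}^2$. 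On this support the defining formula of $f_s$ yields $f_s(\langle b, 1\rangle, 1) = (a^{-1}, c)\, |a|^{s+1/2} \gamma_\psi(a) \tau(a)$.

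Next I extract the support conditions coming from the Whittaker function $W$. This requires performing the Weyl conjugation ${}^{\omega_{l-1, 1}}(rxb)$ explicitly and decomposing the result via an Iwasawa/Iwahori decomposition to determine when it lies in $N_l Z I^+$. The element $\omega_{l-1, 1}$ permutes rows and columns in a controlled way, sending the $\Sp_2$-block and the coordinates of $r \in R^{l,1}$ and $x \in X_1$ into specific near-diagonal positions of the $2l \times 2l$ matrix. After the dust settles, membership in $N_l Z I^+$ forces $a \in 1 + \mathfrak{p}$, the column vector $r_1 \in F^{l-2}$ parameterizing $R^{l,1}$ into $\mathfrak{p}^{l-2}$, and $x \in \mathfrak{p}$; the character value $W({}^{\omega_{l-1,1}}(rxb)) = \psi^{-1}(u)\omega(z)\chi(k)$ collapses to $1$ on this support, since $\chi = \chi_1^\omega$ reads off entries that are constrained to trivialize the character. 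Under $a \in 1 + \mathfrak{p}$ we have $(a^{-1}, c) = 1$ (because $1 + \mathfrak{p} \subset (F^\times)^2$), $\gamma_\psi(a) = 1$, and $\tau(a) = 1$ (as $\tau$ is tamely ramified), so $f_s(\langle b, 1\rangle, 1) = 1$.

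I then apply Lemma~\ref{lemma:computation weil} to compute $\omega_\psi(b)\phi(x)$ with $\phi = \mathbbm{1}_\mathfrak{p}$. Under the support constraints of the previous step, the sign $(a^{-1}, c)$, the factor $\gamma_\psi^{-1}(a)$, and the quadratic oscillation $\psi(-cy^2)$ on $y \in \mathfrak{o}$ all trivialize. The inner $dz$-integral against $\mathbbm{1}_\mathfrak{p}$ yields $\mathrm{vol}(\mathfrak{p})\, \mathbbm{1}_\mathfrak{o}(y)$, restricting $y$ to $\mathfrak{o}$, and then $\int_\mathfrak{o} \psi(2axy)\, dy = \mathrm{vol}(\mathfrak{o})\, \mathbbm{1}_\mathfrak{p}(x)$, which is compatible with the support $x \in \mathfrak{p}$. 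Combining the prefactor $\beta_\psi^{-2}\gamma_\psi^{-1}(-1)$ (after the standard Weil-cocycle reconciliation of $\gamma_\psi(-1)$ with $\gamma_\psi^{-1}(-1)$) with the volumes from each integration --- $\mathrm{vol}^\times(1+\mathfrak{p})$ from $d^\times a$, $\mathrm{vol}(\mathfrak{p}^2)$ from $dc$, $\mathrm{vol}(\mathfrak{p})^{l-2}$ from $dr$, $\mathrm{vol}(\mathfrak{p})$ from $dx$, $\mathrm{vol}(\mathfrak{p})$ from the inner Fourier transform, and $\mathrm{vol}(\mathfrak{o})$ from the outer $dy$-integral --- yields the claimed equality.

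The main obstacle is the explicit execution of the Whittaker-support step, i.e., tracking how the conjugation by $\omega_{l-1, 1}$ redistributes the entries of $b$, $r$, and $x$, and then verifying which entries are pinned down by the $I^+$-condition and which are detected by the affine generic character $\chi_1^\omega$. The sign accounting (Hilbert symbols, Weil indices $\gamma_\psi$, and the eighth root $\beta_\psi$) is delicate but mechanical once the algebraic picture is in place.
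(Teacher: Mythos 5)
Your proposal is correct and follows essentially the same route as the paper: parameterize $N_1\backslash\Sp_2$ by $\overline{B}_1$, extract the constraints $a\in 1+\mathfrak{p}$, $c\in\mathfrak{p}$, $x\in\mathfrak{p}$, $r_1\in\mathfrak{p}^{l-2}$ from the Whittaker support and $c\in\mathfrak{p}^2$ from the support of $f_s$, invoke Lemma~\ref{lemma:computation weil} for the Weil-representation factor, and tally volumes. Your volume accounting ($\mathrm{vol}^\times(1+\mathfrak{p})\cdot\mathrm{vol}(\mathfrak{p}^2)\cdot\mathrm{vol}(\mathfrak{p})^{l-2}\cdot\mathrm{vol}(\mathfrak{p})\cdot\mathrm{vol}(\mathfrak{p})\cdot\mathrm{vol}(\mathfrak{o})$) matches the stated answer, and your observation that the $\gamma_\psi(-1)$ from Lemma~\ref{lemma:computation weil} coincides with $\gamma_\psi^{-1}(-1)$ is exactly the needed reconciliation, valid because $(-1,-1)=1$ when $p\neq 2$.

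One point worth sharpening: you assert that $W({}^{\omega_{l-1,1}}(rxb))$ ``collapses to $1$'' because ``$\chi$ reads off entries constrained to trivialize the character.'' The paper instead shows $W({}^{\omega_{l-1,1}}(rxg))=\psi^{-1}(a^{-1}c\varpi^{-1})$, which on the Iwahori support alone ($c\in\mathfrak{p}$) is a genuinely nontrivial oscillatory factor; it becomes identically $1$ only after one imposes $c\in\mathfrak{p}^2$, and that constraint comes from the support of $f_s$, not from $ZI^+$. Since you establish $c\in\mathfrak{p}^2$ before this step, your conclusion is correct, but the phrasing risks suggesting that the affine generic character trivializes by itself on the Whittaker support, which is false and would matter if the order of the two support computations were reversed. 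Making the formula $W=\psi^{-1}(a^{-1}c\varpi^{-1})$ explicit would also serve you in Lemma~\ref{lemma:computation Psi M(tau,s)f_s with p not 2}, where this nontrivial character on $c\in\mathfrak{p}\setminus\mathfrak{p}^2$ is the whole point.
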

\begin{proof}
Writing the $dg$-integration over $\overline{B}_1$,
\begin{align*}
\Psi(W,\phi,f_s)=\int_{\overline{B}_1}\int_{R^{l,1}} \int_{F}
W({}^{\omega_{l-1,1}}(rxg)) \omega_{\psi}(g) \phi(x) f_s(g,1) \delta_B(b)\,d x\, d r\, d b.
\end{align*}
The support of $W$ is contained in $N_l Z I^+$. Looking at
${}^{\omega_{l-1,1}}(rxg)$ with $b=\left(\begin{smallmatrix}a&0\\a^{-1}c&a^{-1}\end{smallmatrix}\right)\in \overline{B}_1$,
we see that $W$ vanishes unless $a\in 1+\mathfrak{p}$, $x\in\mathfrak{p}$, $c\in\mathfrak{p}$ and the coordinates of
$r_1$ belong in $\mathfrak{p}$. In particular ${}^{\omega_{l-1,1}}(rxg)\in N_lI^+$ (see the definition of $W$).
We may also assume $c\ne0$. Then
\begin{align}\label{eq:W p not 2}
W({}^{\omega_{l-1,1}}(rxg))=\psi^{-1}(a^{-1}c\varpi^{-1}).
\end{align}

Using our definition of $\phi$, the double integral from Lemma~\ref{lemma:computation weil} becomes
\begin{align*}
\int_F   \psi(2axy)
\psi(-cy^2)
\int_{ \mathfrak{p}}
 \psi(-2yz)\ dz \ dy.
\end{align*}
This vanishes unless $y \in \mathfrak{o}$, then since
$a\in 1+\mathfrak{p}$ and $x,c\in\mathfrak{p}$,
\begin{align}\label{eq:phi p not 2}
&\omega_{\psi}(b) \phi(x)=\beta_{\psi}^{-2} \gamma_{\psi}^{-1}(-1)  \mathrm{vol}(\mathfrak{p}) \mathrm{vol}( \mathfrak{o}).
\end{align}
Note that $\gamma_{\psi}^{-1}(a)=1$ because $a\in 1+\mathfrak{p}\subset F^{\times2}$. Also observe that the measure assigned
by $dz$ or $dy$ (which are self dual with respect to $\psi_2$) is precisely the measure assigned by $du$ (justifying the usage of $\mathrm{vol}(\cdots)$ in \eqref{eq:phi p not 2}) because $|2|=1$.

Additionally for $a \in 1 + \mathfrak{p}$, $f_s(b,1)=0$ unless $c \in \mathfrak{p}^2$, in which case
by \eqref{eq:decomposing b in the cover},
\begin{align}\label{eq:f_s p not 2}
f_s(b, 1) = (a^{-1},c)\gamma_{\psi}(a)f_s(\langle\left(\begin{smallmatrix}     1 &  \\  c & 1 \end{smallmatrix}\right),1\rangle)=
f_s(\langle\left(\begin{smallmatrix}     1 &  \\  c & 1 \end{smallmatrix}\right),1\rangle)=1.
\end{align}
Combining \eqref{eq:W p not 2}--\eqref{eq:f_s p not 2} gives the result.
\end{proof}

\begin{lem}\label{lemma:computation M(tau,s)f_s with p not 2}
Let $c\in\mathfrak{p}$, and write $c\in c_0\varpi+\mathfrak{p}^2$ with $c_0\in\mathfrak{o}$. Then
\begin{align*}
[M(\tau,s)f_s]\left(\left(\begin{smallmatrix}1\\c&1\end{smallmatrix}\right), 1 \right)=
\begin{dcases}
A(\tau,\psi,s)\tau(-1) (q-1)(L(2s-1,\tau^2)-1)&c_0=0,\\
q^{-s}\tau(c_0\varpi) \gamma_{\psi}^{-1}(-c_0\varpi)&c_0\in\mathfrak{o}^{\times}.
\end{dcases}
\end{align*}
Here
\begin{align*}
A(\tau,\psi,s)=
\begin{dcases}
 q^{-s}\gamma_{\psi}^{-1}(\varpi)  \tau(\varpi) &\tau|_{\mathfrak{o}^{\times}} \equiv (\varpi, \cdot) \gamma_{\psi}|_{\mathfrak{o}^{\times}},\\
 q^{-1/2}  &\tau|_{\mathfrak{o}^{\times}} \equiv \gamma_{\psi}|_{\mathfrak{o}^{\times}}.
\end{dcases}
\end{align*}
\end{lem}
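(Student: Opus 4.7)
The plan is to reduce to Lemma~\ref{lemma:computation M tau on b} with $a=1$. In both the $c\neq 0$ and $c=0$ branches of that lemma the sign in front of the integrand becomes trivial (since $(\cdot,1)=1$), $\gamma_\psi^{-1}(u^{-1}a)=\gamma_\psi^{-1}(u^{-1})$, and the argument of $f_s$ simplifies to $\left(\begin{smallmatrix}1\\u^{-1}+c&1\end{smallmatrix}\right)$, producing the uniform identity
\begin{align*}
[M(\tau,s)f_s]\left(\left(\begin{smallmatrix}1\\c&1\end{smallmatrix}\right),1\right) = \mathrm{vol}(\mathfrak{o}^\times)\int_{F^\times}\tau(u^{-1})|u|^{1/2-s}\gamma_\psi^{-1}(u^{-1})\,f_s\!\left(\left(\begin{smallmatrix}1\\u^{-1}+c&1\end{smallmatrix}\right),-1\right)d^\times u.
\end{align*}
The definition of $f_s$ restricts the integrand to $u^{-1}+c\in\mathfrak{p}^2$, and on that set $\left(\begin{smallmatrix}1\\u^{-1}+c&1\end{smallmatrix}\right)\in\mathcal{N}$, so $f_s(\cdots,-1)=\tau(-1)$ (the Weil factor is trivial since $1+\mathfrak{p}\subset (F^\times)^2$).

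In the case $c_0\in\mathfrak{o}^\times$, the support condition pins $u^{-1}$ to the single coset $-c_0\varpi(1+\mathfrak{p})$, so $u$ ranges over $-c_0^{-1}\varpi^{-1}(1+\mathfrak{p})$. On this coset $|u|=q$, and since $\tau$ is tamely ramified and $1+\mathfrak{p}\subset (F^\times)^2$, both $\tau(u^{-1})$ and $\gamma_\psi^{-1}(u^{-1})$ are constant, equal to $\tau(-c_0\varpi)$ and $\gamma_\psi^{-1}(-c_0\varpi)$ respectively. The integral collapses to a single coset integration, and combining $\tau(-1)\tau(-c_0\varpi)=\tau(c_0\varpi)$ with $\mathrm{vol}(\mathfrak{o}^\times)=q^{-1/2}(q-1)$ and $\mathrm{vol}^\times(1+\mathfrak{p})=1/(q-1)$ yields the claimed $q^{-s}\tau(c_0\varpi)\gamma_\psi^{-1}(-c_0\varpi)$.

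The interesting case is $c_0=0$: now $c\in\mathfrak{p}^2$, so the support condition becomes $v(u)\le -2$. Writing $u=u_0\varpi^k$ with $u_0\in\mathfrak{o}^\times$ and decomposing
\begin{align*}
\gamma_\psi(u_0^{-1}\varpi^{-k})=\gamma_\psi(u_0^{-1})\gamma_\psi(\varpi^{-k})(u_0^{-1},\varpi^{-k})
\end{align*}
via the cocycle identity, the last two factors are trivial for $k$ even and equal $\gamma_\psi(\varpi)(u_0^{-1},\varpi)$ for $k$ odd. Under either of the two possibilities $\tau|_{\mathfrak{o}^\times}=\gamma_\psi|_{\mathfrak{o}^\times}$ or $\tau|_{\mathfrak{o}^\times}=(\varpi,\cdot)\gamma_\psi|_{\mathfrak{o}^\times}$, orthogonality of the non-trivial quadratic character $(\varpi,\cdot)|_{\mathfrak{o}^\times}$ annihilates one parity of $k$ and makes the $u_0$-integrand constant on the other. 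The remaining $k$-sum is a geometric series in $q^{1-2s}$ (using $\tau^2=1$) summing to $L(2s-1,\tau^2)-1$, multiplied by a prefactor reflecting the starting index ($|k|\ge 2$ in the first subcase, giving $A(\tau,\psi,s)=q^{-1/2}$; $|k|\ge 3$ in the second, producing the extra $q^{1/2-s}\tau(\varpi)\gamma_\psi^{-1}(\varpi)$ that combines with $\mathrm{vol}(\mathfrak{o}^\times)$ to give $A(\tau,\psi,s)=q^{-s}\gamma_\psi^{-1}(\varpi)\tau(\varpi)$). The main obstacle is the bookkeeping in this case: tracking the Hilbert-symbol factors from the $\gamma_\psi$ cocycle, verifying that the correct parity of $k$ is killed in each subcase, and matching the resulting prefactor (with the correct $q$-power, Weil factor, and character value) to the stated $A(\tau,\psi,s)$.
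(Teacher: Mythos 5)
Your proposal is correct and follows essentially the same route as the paper's proof: specialize Lemma~\ref{lemma:computation M tau on b} to $a=1$ (which kills all Hilbert-symbol prefactors), note that the support of $f_s$ forces $u^{-1}+c\in\mathfrak{p}^2$ so that $f_s$ contributes a constant $\tau(-1)$, then in the $c_0=0$ case decompose $\gamma_\psi^{-1}(u^{-1})$ via the cocycle, split by parity of the valuation, and use orthogonality of $(\varpi,\cdot)|_{\mathfrak{o}^\times}$ to annihilate one parity and collapse the other into the geometric series $L(2s-1,\tau^2)-1$. The bookkeeping you flag at the end works out exactly as you describe (the surviving parity is even, starting at exponent $2$, when $\tau|_{\mathfrak{o}^\times}\equiv\gamma_\psi|_{\mathfrak{o}^\times}$, giving $A=q^{-1/2}$; odd, starting at exponent $3$, in the other subcase, whence the extra $q^{1/2-s}\tau(\varpi)\gamma_\psi^{-1}(\varpi)$); the only minor imprecision is that the geometric-series step requires only that $\tau^2$ be unramified (which follows from the hypothesized form of $\tau|_{\mathfrak{o}^\times}$), not the stronger $\tau^2=1$.
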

\begin{proof}
First assume $c\in\mathfrak{p}^2$. Thus, $c$ can be ignored and we see that
the integrand vanishes unless $u^{-1}\in\mathfrak{p}^{2}$. Hence by Lemma~\ref{lemma:computation M tau on b},
\begin{align*}
&[M(\tau,s)f_s]\left(\left(\begin{smallmatrix}1&\\c&1\end{smallmatrix}\right), 1 \right)\\&
=\mathrm{vol}(\mathfrak{o}^{\times}) \sum_{\ell=2}^{\infty}q^{(1/2-s)\ell} \tau^{\ell}(\varpi) \gamma_{\psi}^{-1}(\varpi^{\ell}) \int_{\mathfrak{o}^{\times}}(\varpi^{\ell},u)\gamma_{\psi}^{-1}(u) \tau(u)  \tau(-1) \, d^{\times}u\\&
=  \tau(-1)\mathrm{vol}(\mathfrak{o}^{\times}) \displaystyle\sum_{\ell = 1}^{\infty} q^{(\frac{1}{2}-s) (2\ell+1)}  \tau^{2\ell+1}(\varpi) \gamma_{\psi}^{-1}(\varpi) \int_{\mathfrak{o}^{\times}}  \gamma_{\psi}^{-1}(u) (\varpi, u) \tau(u)\, d^{\times} u\\&\quad
+  \tau(-1) \mathrm{vol}(\mathfrak{o}^{\times}) \displaystyle\sum_{\ell = 1}^{\infty} q^{(\frac{1}{2}-s) (2\ell)}  \tau^{2\ell}(\varpi) \int_{\mathfrak{o}^{\times}}  \gamma_{\psi}^{-1}(u) \tau(u)\, d^{\times} u
\\& =\begin{dcases}
 \tau(-1) \mathrm{vol}(\mathfrak{o}^{\times}) \gamma_{\psi}^{-1}(\varpi) q^{\frac{1}{2}-s} \tau(\varpi) \left( \frac{1}{1 - q^{1-2s} \tau^2(\varpi)}  - 1 \right) & \tau|_{\mathfrak{o}^{\times}} \equiv (\varpi, \cdot) \gamma_{\psi}|_{\mathfrak{o}^{\times}},\\
\tau(-1)\mathrm{vol}(\mathfrak{o}^{\times}) \left( \frac{1}{1 - q^{1-2s} \tau^2(\varpi)}  - 1 \right) & \tau|_{\mathfrak{o}^{\times}} \equiv \gamma_{\psi}|_{\mathfrak{o}^{\times}}.
\end{dcases}
\end{align*}
Here we used the fact that $\gamma_{\psi}$ is a character of $\mathfrak{o}^{\times}$, which is non-trivial because the
level of $\psi$ is $1$ (as opposed to $0$).

Assume now that $c\in c_0\varpi+\mathfrak{p}^2$ for some $c_0\in\mathfrak{o}^{\times}$. We see that the integrand
vanishes unless $u^{-1}+c\in\mathfrak{p}^2$, so $u^{-1}=-u_0c_0\varpi$ with
$u_0\in1+\mathfrak{p}$ and we obtain
\begin{align*}
 \tau(-1) \mathrm{vol}(\mathfrak{o}^{\times}) q^{1/2-s}\int_{1+\mathfrak{p}}\tau(-u_0c_0\varpi ) \gamma_{\psi}^{-1}(-u_0c_0\varpi )
\, d^{\times} u_0.
\end{align*}
Since $\tau$ and $\gamma_{\psi}$ are trivial on $1+\mathfrak{p}$, $(1+\mathfrak{p},\cdot)=1$, $\mathrm{vol}(\mathfrak{o}^{\times})=(q-1)q^{-1/2}$ and
$\mathrm{vol}^{\times}(1+\mathfrak{p})=(q-1)^{-1}$, the result follows.
\end{proof}

\begin{lem}\label{lemma:computation Psi M(tau,s)f_s with p not 2}
\begin{align*}
&\Psi(W, \phi, M(\tau,s)f_s)=\Psi(W, \phi, f_s)A(\tau,\psi,s)\tau(-1)(q-1)(L(2s-1,\tau^2)-1-\frac{\Lambda(\tau,\psi,s)}{q-1}).
\end{align*}
Here
\begin{align*}
\Lambda(\tau,\psi,s)=
\begin{dcases}
 1&\tau|_{\mathfrak{o}^{\times}} \equiv (\varpi, \cdot) \gamma_{\psi}|_{\mathfrak{o}^{\times}},\\
 -\tau(\varpi)\gamma_{\psi}(\varpi)q^{1/2-s}G(\psi^{-1})&\tau|_{\mathfrak{o}^{\times}} \equiv \gamma_{\psi}|_{\mathfrak{o}^{\times}},
\end{dcases}
\end{align*}
where $G(\psi^{-1})= \sum_{x \in \mathbb{F}_q^{\times}} \psi^{-1}(x) (\varpi, x)$ is a Gauss sum over the multiplicative group of the
finite field $\mathbb{F}_q$ with $q$ elements.
\end{lem}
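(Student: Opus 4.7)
The strategy mirrors Lemma~\ref{lemma:computation Psi with p not 2} exactly, the only difference being that the $f_s$-support constraint $c\in\mathfrak{p}^2$ is now relaxed, since $[M(\tau,s)f_s]$ is nonzero on all of $c\in\mathfrak{p}$. The support conditions coming from $W$ and from $\omega_\psi(g)\phi$ (namely $a\in 1+\mathfrak{p}$, $x\in\mathfrak{p}$, $c\in\mathfrak{p}$, and equations \eqref{eq:W p not 2}--\eqref{eq:phi p not 2}) are unchanged, because they depend only on $W$, $\phi$, and the $R^{l,1}$, $X_1$ integrations. Therefore the ``$c$-independent part'' of the integrand that produced $\Psi(W,\phi,f_s)$ in Lemma~\ref{lemma:computation Psi with p not 2} still factors out, and I split
\begin{align*}
\Psi(W,\phi,M(\tau,s)f_s)=\Psi_{\mathfrak{p}^2}+\Psi_{\mathfrak{p}\setminus\mathfrak{p}^2},
\end{align*}
matching the two cases of Lemma~\ref{lemma:computation M(tau,s)f_s with p not 2}.

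For $\Psi_{\mathfrak{p}^2}$, we have $a^{-1}c\varpi^{-1}\in\mathfrak{p}$, so the Whittaker value in \eqref{eq:W p not 2} is $1$, and substituting the $c_0=0$ value from Lemma~\ref{lemma:computation M(tau,s)f_s with p not 2} yields
\begin{align*}
\Psi_{\mathfrak{p}^2}=\Psi(W,\phi,f_s)\,A(\tau,\psi,s)\,\tau(-1)\,(q-1)\bigl(L(2s-1,\tau^2)-1\bigr),
\end{align*}
which is precisely the $(q-1)(L-1)$ term in the statement. For $\Psi_{\mathfrak{p}\setminus\mathfrak{p}^2}$, I would parametrize $c=c_0\varpi+c''$ with $c_0$ running over a system of representatives of $\mathbb{F}_q^\times$ and $c''\in\mathfrak{p}^2$. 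Since $a\in 1+\mathfrak{p}$, $a^{-1}c\varpi^{-1}\equiv c_0\pmod{\mathfrak{p}}$, so the Whittaker factor becomes $\psi^{-1}(c_0)$; each coset contributes volume $\mathrm{vol}(\mathfrak{p}^2)$, exactly the factor appearing in $\Psi(W,\phi,f_s)$. Using the $c_0\in\mathfrak{o}^\times$ formula of Lemma~\ref{lemma:computation M(tau,s)f_s with p not 2},
\begin{align*}
\Psi_{\mathfrak{p}\setminus\mathfrak{p}^2}=\Psi(W,\phi,f_s)\,q^{-s}\sum_{c_0\in\mathbb{F}_q^\times}\psi^{-1}(c_0)\,\tau(c_0\varpi)\,\gamma_\psi^{-1}(-c_0\varpi).
\end{align*}

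The remaining task, which is the main technical point, is to identify the inner sum with $-A(\tau,\psi,s)\tau(-1)\Lambda(\tau,\psi,s)$. I would use the cocycle relation $\gamma_\psi(ab)=\gamma_\psi(a)\gamma_\psi(b)(a,b)$ to extract the $-\varpi$ and write $\tau(c_0)\gamma_\psi^{-1}(-c_0\varpi)=\gamma_\psi^{-1}(-\varpi)(-\varpi,c_0)\cdot[\tau(c_0)\gamma_\psi^{-1}(c_0)]$. The bracket equals $(\varpi,c_0)$ in Case~1 and $1$ in Case~2. Crucially, since $p\neq2$ the Hilbert symbol of two units is trivial, so $(-1,c_0)=1$ for $c_0\in\mathfrak{o}^\times$; this collapses Case~1 to $\sum_{c_0\in\mathbb{F}_q^\times}\psi^{-1}(c_0)=-1$, and Case~2 to the Gauss sum $\tau(\varpi)\gamma_\psi^{-1}(-\varpi)G(\psi^{-1})$. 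Matching with the claimed $A,\Lambda$ requires two identities: in Case~1, $\tau(-1)=(-1,\varpi)\gamma_\psi(-1)$ together with $\gamma_\psi(-\varpi)/\gamma_\psi(\varpi)=\gamma_\psi(-1)(-1,\varpi)$; and in Case~2, $(-1,\varpi)=\gamma_\psi(-1)^2\gamma_\psi(\varpi)^2$, which follows from $\gamma_\psi(a)^2=(a,a)$ combined with $(a,-a)=1$ and $(-1,-1)=1$ for $p\neq 2$.

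The hard part will be none of the integration manipulations, which are essentially dictated by Lemma~\ref{lemma:computation Psi with p not 2}, but rather the bookkeeping of Weil factors and Hilbert symbols in the final Gauss-sum identification. The non-trivial Gauss sum appears only in Case~2 because the ``right'' character of $c_0$ inside the sum is $(\varpi,c_0)$, which is non-trivial on $\mathfrak{o}^\times$, whereas in Case~1 the relevant character is $(-1,c_0)\equiv 1$.
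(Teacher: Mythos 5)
Your proposal is correct and follows essentially the same route as the paper: both split $\Psi(W,\phi,M(\tau,s)f_s)$ according to whether $c\in\mathfrak{p}^2$ or $c\in\mathfrak{p}\setminus\mathfrak{p}^2$, substitute the two cases of Lemma~\ref{lemma:computation M(tau,s)f_s with p not 2}, and then reduce the $\mathfrak{p}\setminus\mathfrak{p}^2$ piece to a sum over $\mathbb{F}_q^\times$ which becomes $-1$ in Case~1 (because the unit part of the Hilbert symbol is trivial for $p\neq 2$) and $G(\psi^{-1})$ in Case~2. The paper passes through a $d^{\times}v$-integral over $\mathfrak{o}^{\times}$ rather than an explicit coset sum, but this is a superficial difference; your Weil-index and Hilbert-symbol bookkeeping at the end (in particular $\gamma_\psi(-1)=\gamma_\psi^{-1}(-1)$ via $(-1,-1)=1$, and $\gamma_\psi(\varpi)^2=(\varpi,\varpi)=(-1,\varpi)$) is the same simplification the paper carries out implicitly.
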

\begin{proof}
We begin as in the proof of Lemma~\ref{lemma:computation Psi with p not 2}. In particular, $W$ vanishes unless
$a\in 1+\mathfrak{p}$, $x,c\in\mathfrak{p}$. We obtain
\begin{align*}
&\Psi(W, \phi, M(\tau,s)f_s)\\&=\beta_{\psi}^{-2} \gamma_{\psi}^{-1}(-1)\mathrm{vol}(\mathfrak{p})^{l} \mathrm{vol}( \mathfrak{o}) \mathrm{vol}^{\times}(1 + \mathfrak{p})
\\&\times\left( \int_{\mathfrak{p}^2} \psi^{-1}(c\varpi^{-1}) [M(\tau,s)f_s]\left(\left(\begin{smallmatrix}1&\\c&1\end{smallmatrix}\right), 1 \right)\, dc + \int_{\mathfrak{p} \setminus \mathfrak{p}^2} \psi^{-1}(c\varpi^{-1})  [M(\tau,s)f_s]\left(\left(\begin{smallmatrix}1&\\c&1\end{smallmatrix}\right), 1 \right)\, dc \right).
\end{align*}
Now we apply Lemma~\ref{lemma:computation M(tau,s)f_s with p not 2}. Since $\psi(c\varpi^{-1})$ is trivial when $c\in\mathfrak{p}^2$,
the first integral becomes
\begin{align}\label{eq:first sum}
\mathrm{vol}(\mathfrak{p}^2)A(\tau,\psi,s)\tau(-1)(q-1)(L(2s-1,\tau^2)-1).
\end{align}
For the second integral, because $\psi^{-1}(c\varpi^{-1})=\psi^{-1}(c_0)$ (where we have written $c \in c_0\varpi+\mathfrak{p}^2$ with $c_0\in\mathfrak{o}^{\times}$), it equals
\begin{align*}
&q^{-s}\int_{\mathfrak{p}\setminus\mathfrak{p}^2} \psi^{-1}(c\varpi^{-1}) \tau(c_0\varpi)\gamma_{\psi}^{-1}(-c_0\varpi)\ dc \nonumber \\\nonumber
&=q^{-s}\int_{\mathfrak{p} \setminus \mathfrak{p}^2}  \psi^{-1}(c_0\varpi^{-1})  \tau(c_0\varpi )\gamma_{\psi}^{-1}(-c_0\varpi ) \, dc\\
&=q^{-s}\tau(\varpi)\gamma_{\psi}^{-1}(-\varpi)\int_{\mathfrak{p} \setminus \mathfrak{p}^2}  \psi^{-1}(c_0\varpi^{-1})  \tau(c_0)\gamma_{\psi}^{-1}(c_0)(c_0,-\varpi) \, dc.
\end{align*}
Using the integration formula
\begin{align*}
\int_{\mathfrak{p}\setminus\mathfrak{p}^2} \xi(c)\,dc
=q^{-1}\int_{\mathfrak{o}^{\times}} \xi(v\varpi)\,dv=q^{-1}(q^{1/2}-q^{-1/2})
\int_{\mathfrak{o}^{\times}} \xi(v\varpi)\,d^{\times}v,
\end{align*}
and the fact that the Hilbert symbol is trivial on $\mathfrak{o}^{\times} \times \mathfrak{o}^{\times}$ (since $p$ is odd),
we obtain
\begin{align}\label{eq:second sum 2}
&q^{-s}\tau(\varpi)\gamma_{\psi}^{-1}(-\varpi)\mathrm{vol}(\mathfrak{p}^2)(q-1)\int_{\mathfrak{o}^{\times}}  \psi^{-1}(v)  \tau(v)\gamma_{\psi}^{-1}(v)(\varpi,v) \, d^{\times}v.
\end{align}

Now if $\tau|_{\mathfrak{o}^{\times}} \equiv (\varpi, \cdot) \gamma_{\psi}|_{\mathfrak{o}^{\times}}$, the last $d^{\times}v$-integral
becomes $(1-q)^{-1}$. Combining \eqref{eq:first sum} with \eqref{eq:second sum 2}, and recalling that in this case
$A(\tau,\psi,s)=q^{-s}\tau(\varpi)\gamma_{\psi}^{-1}(\varpi)$, the sum of integrals
in the expression for $\Psi(W, \phi, M(\tau,s)f_s)$ equals
\begin{align*}
\mathrm{vol}(\mathfrak{p}^2)A(\tau,\psi,s)\tau(-1)(q-1)[
 L(2s-1,\tau^2)-1-\frac{1}{q-1}\tau(-1) \gamma_{\psi}^{-1}(-1) (-1, \varpi)].
\end{align*}
Then our assumption on $\tau|_{\mathfrak{o}^{\times}}$ and Lemma~\ref{lemma:computation Psi with p not 2} imply
\begin{align*}
&\Psi(W, \phi, M(\tau,s)f_s)=\Psi(W, \phi, f_s)A(\tau,\psi,s)\tau(-1)(q-1)(L(2s-1,\tau^2)-1-\frac{1}{q-1}).
\end{align*}

For the case $\tau|_{\mathfrak{o}^{\times}} \equiv \gamma_{\psi}|_{\mathfrak{o}^{\times}}$, we observe that the
integral in \eqref{eq:second sum 2} is invariant under $1 + \mathfrak{p}$, hence \eqref{eq:second sum 2} equals
\begin{align*}
&q^{-s}\tau(\varpi)\gamma_{\psi}^{-1}(-\varpi)\mathrm{vol}(\mathfrak{p}^2)
\int_{\mathfrak{o}^{\times} / 1 + \mathfrak{p}} \psi^{-1}(w) (\varpi, w) \,d^{\times} w
=q^{-s}\tau(\varpi)\gamma_{\psi}^{-1}(-\varpi) \mathrm{vol}(\mathfrak{p}^2)G(\psi^{-1}).
\end{align*}
Together with \eqref{eq:first sum} and since now $A(\tau,\psi,s)=q^{-1/2}$, the sum of two integrals equals
\begin{align*}
\mathrm{vol}(\mathfrak{p}^2)A(\tau,\psi,s)\tau(-1)(q-1)[
 L(2s-1,\tau^2)-1+ \frac{1}{q-1}\tau(-\varpi) \gamma_{\psi}^{-1}(-\varpi) q^{1/2-s} G(\psi^{-1})].
\end{align*}
As above, the result now follows from Lemma~\ref{lemma:computation Psi with p not 2}.
\end{proof}

We can now deduce Theorem~\ref{thm:main} for the case $\alpha=1$, i.e., $\alpha$ is a representative of the trivial square class
in $\kappa_F^{\times} / (\kappa_F^{\times})^2$:
\begin{thm}\label{thm:main p not 2}
Let $\tau$ be a tamely ramified quadratic character of $F^{\times}$, and recall that $\pi = \pi_1^{\omega}$.  Then $\gamma(s, \pi \times \tau, \psi)$ has a pole at $s = 1$
if and only if $\tau$ is the unique non-trivial quadratic character of $F^{\times}$ such that $\tau|_{\mathfrak{o}^{\times}} \equiv \gamma_{\psi}|_{\mathfrak{o}^{\times}}$ and $\tau(\varpi) =  \gamma_{\psi}^{-1}(\varpi)  \frac{|G(\psi^{-1})|}{G(\psi^{-1})}$.
\end{thm}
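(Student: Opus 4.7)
The plan is to combine Lemma~\ref{lemma:computation Psi M(tau,s)f_s with p not 2} with the formula~\eqref{def2:gamma} and study each factor at $s=1$. First, since $\tau$ is tamely ramified and quadratic, $\tau^2=1$, so $\gamma(2s-1,\tau^2,\psi_2)=\gamma(2s-1,\mathbf{1},\psi_2)$ has a simple pole at $s=1$, while the remaining explicit prefactors $\pi(-I_{2l})\tau(-1)^l c(s,l,\tau)\gamma_{\psi}(-1)\gamma(\psi)$ appearing in \eqref{def2:gamma} are holomorphic and nonzero there. By Lemma~\ref{lemma:computation Psi M(tau,s)f_s with p not 2}, the ratio $\Psi(W,\phi,M(\tau,s)f_s)/\Psi(W,\phi,f_s)$ equals
\begin{align*}
A(\tau,\psi,s)\,\tau(-1)(q-1)\Bigl(L(2s-1,\tau^2)-1-\tfrac{\Lambda(\tau,\psi,s)}{q-1}\Bigr),
\end{align*}
and $A(\tau,\psi,1)$ is a nonzero explicit constant in both subcases of $\tau|_{\mathfrak{o}^{\times}}$. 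Hence the pole of $\gamma(s,\pi\times\tau,\psi)$ at $s=1$ survives iff the bracket is nonzero at $s=1$.

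Next I evaluate the bracket at $s=1$. Since $\tau^2=\mathbf{1}$, $L(1,\mathbf{1})=(1-q^{-1})^{-1}=q/(q-1)$, so $L(1,\mathbf{1})-1=1/(q-1)$. Thus the bracket reduces to $(1-\Lambda(\tau,\psi,1))/(q-1)$, and the pole at $s=1$ is present iff $\Lambda(\tau,\psi,1)\neq 1$.

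Now I split on the two possibilities for $\tau|_{\mathfrak{o}^{\times}}$ dictated by the tameness and quadraticity of $\tau$. In the case $\tau|_{\mathfrak{o}^{\times}}\equiv(\varpi,\cdot)\gamma_{\psi}|_{\mathfrak{o}^{\times}}$, Lemma~\ref{lemma:computation Psi M(tau,s)f_s with p not 2} gives $\Lambda(\tau,\psi,1)=1$, so the $\gamma$-factor is holomorphic at $s=1$. In the case $\tau|_{\mathfrak{o}^{\times}}\equiv\gamma_{\psi}|_{\mathfrak{o}^{\times}}$,
\begin{align*}
\Lambda(\tau,\psi,1)=-\tau(\varpi)\gamma_{\psi}(\varpi)q^{-1/2}G(\psi^{-1}).
\end{align*}
Recalling that $|G(\psi^{-1})|^2=q$ (classical quadratic Gauss sum), the equation $\Lambda(\tau,\psi,1)=1$ becomes $\tau(\varpi)=-\gamma_{\psi}(\varpi)^{-1}|G(\psi^{-1})|/G(\psi^{-1})$. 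Since $\tau^2=1$ forces $\tau(\varpi)=\pm1$, there are exactly two quadratic characters with the prescribed restriction to $\mathfrak{o}^{\times}$, differing precisely by a sign on $\varpi$. Consequently $\Lambda\neq 1$ holds for exactly one of them, namely the character with $\tau(\varpi)=\gamma_{\psi}(\varpi)^{-1}|G(\psi^{-1})|/G(\psi^{-1})$, yielding the claimed unique pole.

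The only mildly subtle point is the sign bookkeeping in passing from $\Lambda(\tau,\psi,1)=1$ to the stated formula for $\tau(\varpi)$, together with the implicit check that $\gamma_{\psi}(\varpi)^{-1}|G(\psi^{-1})|/G(\psi^{-1})$ is indeed a square root of unity (so that it is an admissible value for a quadratic character), which follows from $\gamma_{\psi}(\varpi)^2(\varpi,\varpi)\cdot G(\psi^{-1})^2/q=1$, a well-known consequence of the quadratic Gauss sum evaluation and the definition of the Weil factor.
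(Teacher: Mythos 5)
Your argument is correct and follows essentially the same line as the paper's proof of Theorem~\ref{thm:main p not 2}: combine Lemma~\ref{lemma:computation Psi M(tau,s)f_s with p not 2} with \eqref{def2:gamma}, observe that all factors other than $\gamma(2s-1,\tau^2,\psi_2)$ and the bracket are holomorphic and nonzero at $s=1$, evaluate the bracket at $s=1$ as $(1-\Lambda(\tau,\psi,1))/(q-1)$, and split on the two possibilities for $\tau|_{\mathfrak{o}^{\times}}$, using $G(\psi^{-1})^2=(-1,\varpi)q$ to rewrite the pole condition.

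One small slip in your closing aside: since $\gamma_{\psi}(\varpi)^2=(\varpi,\varpi)$, the identity you wrote, $\gamma_{\psi}(\varpi)^2(\varpi,\varpi)\,G(\psi^{-1})^2/q=1$, actually evaluates to $(-1,\varpi)$, which is $-1$ when $q\equiv 3\pmod 4$. The identity that yields $\tau(\varpi)^2=1$ is $\gamma_{\psi}(\varpi)^{2}\,G(\psi^{-1})^2/q=1$, without the extra Hilbert symbol; equivalently $\gamma_{\psi}^{-2}(\varpi)\,q/G(\psi^{-1})^2=(\varpi,-1)(-1,\varpi)=1$. This is exactly the point the paper records in the remark following the theorem.
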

\begin{proof}
By Lemma~\ref{lemma:computation Psi with p not 2}, $\Psi(W,\phi,f_s)$ is in particular nonvanishing, hence the zeros and poles of
$\gamma(s, \pi \times \tau, \psi)$ may be read off the numerator of \eqref{def2:gamma}.
Since $\tau^2=1$, $\gamma(2s-1,\tau^2,\psi_2)$ has a simple pole at $s=1$.  Also note that $L(1,\tau^2)=(1-q^{-1})^{-1}$.

Assume the restriction of $\tau$ to $\mathfrak{o}^{\times}$ coincides with $(\varpi,\cdot)\gamma_{\psi}$.
Then $\Lambda(\tau,\psi,s)=1$, hence by Lemma~\ref{lemma:computation Psi M(tau,s)f_s with p not 2},
$\Psi(W,\phi,M(\tau,s)f_s)$ has a simple zero at $s=1$. This zero cancels with the pole of $\gamma(2s-1,\tau^2,\psi_2)$, and we deduce that $\gamma(s, \pi \times \tau, \psi)$ does not have a pole at $s=1$.

Now consider the case $\tau|_{\mathfrak{o}^{\times}} \equiv \gamma_{\psi}|_{\mathfrak{o}^{\times}}$.
Then by Lemma~\ref{lemma:computation Psi M(tau,s)f_s with p not 2},
$\Psi(W,\phi,M(\tau,s)f_s)$ has a zero at $s=1$ if and only if
\begin{align*}
\tau(\varpi)=-q^{1/2}\gamma_{\psi}^{-1}(\varpi)G(\psi^{-1})^{-1}.
\end{align*}
Since $G(\psi^{-1})^2 = (-1, \varpi) q$ (\cite[(23.6.3)]{BH06}), $G(\psi^{-1}) = \zeta q^{1/2}$ for some fourth root of unity $\zeta$, and $G(\psi^{-1}) / |G(\psi^{-1})| = \zeta$. Therefore, we obtain
\begin{align*}
\tau(\varpi)=-\gamma_{\psi}^{-1}(\varpi)\frac{|G(\psi^{-1})|}{G(\psi^{-1})}.
\end{align*}
When $\tau(\varpi)$ satisfies this condition, the pole of $\gamma(1,\tau^2,\psi_2)$ is canceled. The remaining option is
$\tau(\varpi)=\gamma_{\psi}^{-1}(\varpi)\frac{|G(\psi^{-1})|}{G(\psi^{-1})}$ (because $\tau^2(\varpi)=1$), and in this case
$\Psi(W,\phi,M(\tau,s)f_s)$ does not have a zero, so that
$\gamma(s, \pi \times \tau, \psi)$ has a pole at $s=1$.
\end{proof}
\begin{rmk}
Note that for
$\tau(\varpi)=\pm\gamma_{\psi}^{-1}(\varpi)\frac{|G(\psi^{-1})|}{G(\psi^{-1})}$, indeed we have $\tau^2(\varpi) = 1$ (see \cite[Lemma 3.4]{Dani}).
\end{rmk}

Thus far we have computed $\gamma(s, \pi \times \tau, \psi)$ for $\pi=\pi_{1}^{\omega}$. In general, recall the representation $\pi_{\alpha}^{\omega}$, where $\alpha$ is a non-trivial class in $\kappa_F^{\times} / (\kappa_F^{\times})^2$, constructed in Section~\ref{simplesupercuspidaldefinition}. This representation is $\psi^{*,\alpha}$-generic, but $\psi^{*,\alpha}$ is conjugate to the character $u\mapsto \psi(\alpha\sum_{i=1}^{l}u_{i,i+1})$ of $N_l$, via the image of $\diag(\alpha^{l-1},  \ldots,\alpha, 1)\in\GL_l$ in $T_l$. Therefore, $\gamma(s, \pi_{\alpha}^{\omega}\times\tau,\psi_{\alpha})$ is defined.

Repeating all of the above computations with $\psi_{\alpha}$ instead of $\psi$, the only dependency on $\psi_{\alpha}$ is that
it must have the same level as $\psi$. In particular, we need $\psi_{\alpha}(a^{-1}c\varpi^{-1})$ to be trivial precisely when
$\psi(a^{-1}c\varpi^{-1})$ is. Since $|\alpha|=1$, this assumption holds and we deduce that
$\gamma(s,\pi_{\alpha}^{\omega}\times\tau,\psi_{\alpha})$ has a pole at $s=1$ if and only if
$\tau$ is the unique non-trivial quadratic character of $F^{\times}$ such that $\tau(\varpi) =  \gamma_{\psi_{\alpha}}^{-1}(\varpi)  \frac{|G(\psi^{-1}_{\alpha})|}{G(\psi^{-1}_{\alpha})}$ and $\tau|_{\mathfrak{o}^{\times}} \equiv \gamma_{\psi_{\alpha}}|_{\mathfrak{o}^{\times}}$. This completes the proof of Theorem~\ref{thm:main}.

\subsection{The Langlands parameter}\label{fullparameter}
In this section we prove Theorem~\ref{thm:main param}.
In order to describe the Langlands parameter of a simple supercuspidal representation of $\Sp_{2l}$, we need the parameterization of simple supercuspidal representations of general linear groups. Such representations are parameterized by a central character $\omega$, a uniformizer, and a particular root of the evaluation of $\omega$ on the particular uniformizer. For the details, see \cite[\S~3.1]{AL14}.

Recall that $\pi = \pi^{\omega}_{\alpha}$ is a simple supercuspidal representation of $\Sp_{2l}$ induced from the character $\chi = \chi^{\omega}_{\alpha}$ of $ZI^+$.  Then $\chi$ is the restriction to $\Sp_{2l}$ of a unique character $\tilde\chi$ of $I^+_{\GL_{2l}}$, which is invariant under the involution $\theta$ defining $\Sp_{2l}$.  Here, $I^+_{\GL_{2l}}$ is the pro-unipotent radical of the standard Iwahori in $\GL_{2l}$.  Let $\varphi_{\pi}$ be the Langlands parameter of $\pi$. By \cite{BHS17}, $\varphi_{\pi}=\varphi_1 \oplus \varphi_2$, where $\varphi_1$ is an irreducible $2l$-dimensional representation of the Weil group $W_F$, which corresponds to a simple supercuspidal representation $\Pi_1$ of $\GL_{2l}$, whose affine generic character (see \cite{AL14}) is given on $I^+_{GL_{2l}}$ by $(\tilde{\chi})^2|_{I_{\GL_{2l}}^+}$, and $\varphi_2$ is a $1$-dimensional representation of $\GL_1$, which equals $\tau_{\alpha}$ by Theorem \ref{thm:main}.

The involution $\theta$ is given by
\begin{align*}
\theta(g) = \left(\begin{smallmatrix}&J_l\\-J_l\end{smallmatrix}\right)^{-1} ({}^t g^{-1}) \left(\begin{smallmatrix}&J_l\\-J_l\end{smallmatrix}\right).
\end{align*}
An affine generic character of $I_{\GL_{2l}}^+$ takes the form
\begin{align*}
\lambda_{\underline{s}}(g)=\psi^{-1}(\sum_{i=1}^{2l-1}s_i v_{i,i+1} + s_{2l} \varpi^{-1}v_{2l,1} ).
\end{align*}

One can check, by root group calculations, that
\begin{align*}
\lambda_{\underline{s}}(\theta(g)) = \psi^{-1}(-s_{2l-1} v_{1,2} -... - s_{l+1} v_{l-1,l} + s_l v_{l,l+1} - s_{l-1} v_{l+1,l+2} - ... - s_{1} v_{2l-1,2l} + s_{2l} \varpi^{-1}v_{2l,1} ).
\end{align*}
Then $\lambda_{\underline{s}}$ is $\theta$ invariant if and only if
\begin{align*}
s_{2l-1} = -s_1, \quad s_{2l-2} = -s_2,\quad\ldots, \quad s_{l+1} = -s_{l-1}.
\end{align*}
Assume that $\lambda_{\underline{s}}$ is $\theta$ invariant. Thus
\begin{align*}
\lambda_{\underline{s}}(g)=\psi^{-1}(s_1 v_{1,2} +... + s_{l-1} v_{l-1,l} + s_l v_{l,l+1}  -s_{l-1} v_{l+1,l+2} + ... -s_1 v_{2l-1,2l} + s_{2l} \varpi^{-1}v_{2l,1} ).
\end{align*}
Restricting this to $I^+_{\Sp_{2l}}$, we get (since $v_{2l-1,2l} = -v_{1,2}$, $v_{2l-2,2l-1} = -v_{2,3}$, etc., in $\Sp_{2l}$)
\begin{align*}
\lambda_{\underline{s}}|_{I^+_{\Sp_{2l}}}(g)=\psi^{-1}(2s_1 v_{1,2} +\ldots + 2s_{l-1} v_{l-1,l} + s_l v_{l,l+1}  + s_{2l} \varpi^{-1}v_{2l,1} ).
\end{align*}

Therefore, setting
\begin{align*}
s_1 = \ldots =s_{l-1}=1/2,\quad s_l = \alpha, \quad s_{2l} = 1,
\end{align*}
we obtain a character $\tilde{\chi} = \lambda_{\underline{s}}$, whose restriction to $I^+=I^+_{\Sp_{2l}}$ coincides with that of $\chi$. Moreover, $\tilde{\chi}$ is $\theta$-invariant.  We are interested in the square of $\tilde{\chi}$, which is given by the tuple
$(1,1, \ldots , 1,2\alpha, -1, -1, \ldots , -1, 2)$, where $2 \alpha$ is in the $l$-th position.  More precisely, $\tilde{\chi}^2$ is given  on $I^+_{\GL_{2l}}$ by
\begin{align*}
k \mapsto \psi(-k_{1,2} - k_{2,3} - \ldots- k_{l-1,l} - 2 \alpha k_{l,l+1} + k_{l+1,l+2} + k_{l+2,l+3} + \ldots + k_{2l-1,2l} - 2\varpi^{-1} k_{2l,1}).
\end{align*}
Following the description in \cite[\S~3.1]{AL14}, we can conjugate this character by an element of $T_0$, to get the tuple $(1, 1, \ldots, 1, 1, (-1)^{l+1} 4 \alpha)$, which yields the character
\begin{align*}
k \mapsto \psi(k_{1,2} + k_{2,3} + \ldots + k_{2l-1,2l} + \varpi_{\alpha,l}^{-1} k_{2l,1}),\qquad \varpi_{\alpha,l}^{-1}=(-1)^{l+1} 4 \alpha\varpi^{-1},
\end{align*}
of $I^+_{\GL_{2l}}$. We re-denote this character by $\tilde{\chi}^2$.  Since the uniformizer $\varpi$ for $\pi$ is already fixed, we conclude that the uniformizer for $\Pi_1$ is $\varpi_{\alpha,l}$.

Now assume $p \nmid 2l$. Then $\varphi_1 = \Ind_{W_E}^{W_F}(\xi)$ for some tamely ramified
extension $E/F$ of degree $2 l$, and some character $\xi$ of $E^{\times}$ (see \cite{BH06}). Here, we have used the isomorphism $W_E^{\mathrm{ab}} \cong E^{\times}$, and pulled $\xi$ back to $W_E$.  Our goal now is to describe $E$ and $\xi$.  Let $\zeta$ be a $2l$-th root of $\varpi_{\alpha,l}$, and set $E = F(\zeta)$.
This is a totally ramified extension of $F$, thus $\kappa_E=\kappa_F$. Then $\xi$ is a character of $E^{\times} = \langle \zeta \rangle \times \kappa_F^{\times} \times (1 + \mathfrak{p}_E)$.

Relative to the basis
\begin{align*}
\zeta^{2l-1}, \zeta^{2l-2}, \cdots, \zeta, 1
\end{align*}
of $E/F$, we have an embedding
\begin{align*}
\iota : E^{\times} \hookrightarrow \GL_n(F).
\end{align*}
By \cite{BHS17}, for all $x \in 1 + \mathfrak{p}_E$,
\begin{align*}
\xi(x) = \tilde{\chi}^2(\iota(x))= \psi(\sum_{i=1}^{2l-1}\iota(x)_{i,i+1} + \varpi_{\alpha,l}^{-1} \iota(x)_{2l,1}).
\end{align*}

Since the determinant of $\varphi_{\pi}$ is trivial (by \cite{CKPS}), the central character $\omega_{\Pi_1}$ of $\Pi_1$ must satisfy $\omega_{\Pi_1} = \tau_{\alpha}^{-1}|_{F^{\times}} = \tau_{\alpha}|_{F^{\times}}$. Therefore, since $\omega_{\Pi_1} = \mathrm{det}(\varphi_1) = \mathrm{det}(\mathrm{Ind}_{W_E}^{W_F} 1_E) \otimes \xi|_{F^{\times}}$ (see \cite{BH06}), we deduce that $\xi|_{\kappa_F^{\times}} = \tau_{\alpha}|_{\kappa_F^{\times}} \otimes (\mathrm{det}(\mathrm{Ind}_{W_E}^{W_F} 1_E))^{-1}$. Thus, we have determined $\xi|_{1 + \mathfrak{p}_E}$ and $\xi|_{\kappa_F^{\times}}$. It remains to find $\xi(\zeta)$.

According to \cite[Corollary 3.12, \S~3.4, \S~3.6]{AL14},
\begin{align*}
\xi(\zeta) = \delta \cdot \lambda_{E/F}(\psi_{\alpha})^{-1},
\end{align*}
where $\lambda_{E/F}(\psi_{\alpha})$ is the Langlands constant (\cite{La6}),
and $\delta$ is the coefficient of $q^{1/2-s}$ in $\gamma(s, \Pi_1, \psi_{\alpha})$.  By the Langlands correspondence for general linear groups, $\delta$ is precisely the coefficient of $q^{1/2-s}$ in $\gamma(s, \varphi_1,  \psi_{\alpha})$.
Now, since
\begin{align*}
\gamma(s, \pi, \psi_{\alpha})=\gamma(s, \varphi_{\pi}, \psi_{\alpha}) = \gamma(s, \varphi_1,  \psi_{\alpha}) \gamma(s, \varphi_2,  \psi_{\alpha})=\gamma(s, \Pi_1,  \psi_{\alpha}) \gamma(s, \tau_{\alpha},  \psi_{\alpha}),
\end{align*}
we can compute $\delta$ using $\gamma(s, \pi, \psi_{\alpha})=\gamma(s, \pi\times1, \psi_{\alpha})$ and $\gamma(s, \tau_{\alpha},  \psi_{\alpha})$.

\begin{thm}
$\gamma(s, \pi,  \psi_{\alpha}) = \omega(-I_{2l})
\gamma(\psi_{\alpha})^{-1}\gamma_{\psi_{\alpha}}^{-1}(\varpi)q^{1/2-s}$.
\end{thm}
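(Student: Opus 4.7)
The plan is to derive the formula as a direct specialization of the definition \eqref{def2:gamma} with $\tau$ the trivial character of $F^{\times}$ and $n=1$, feeding into it the computation of Lemma~\ref{lemma:computation Psi M(tau,s)f_s with p not 2}. Since $p$ is odd we have $|2|=1$, hence $c(s,l,1)=1$; also $\tau(-1)^{l}=1$ and $\pi(-I_{2l})=\omega(-I_{2l})$ because $-I_{2l}$ is central. Using $\gamma(\psi_{\alpha})\gamma(\psi_{-\alpha})=1$, which yields $\gamma_{\psi_{\alpha}}(-1)\gamma(\psi_{\alpha})=\gamma(\psi_{\alpha})^{-1}$, the formula \eqref{def2:gamma} collapses to
\begin{align*}
\gamma(s,\pi,\psi_{\alpha})=\omega(-I_{2l})\gamma(\psi_{\alpha})^{-1}\gamma(2s-1,1,(\psi_{\alpha})_{2})\,\frac{\Psi(W,\phi,M(1,s)f_{s})}{\Psi(W,\phi,f_{s})}.
\end{align*}

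The first key observation is that $\tau=1$ falls into the first case of Lemma~\ref{lemma:computation M(tau,s)f_s with p not 2}: since $p$ is odd, both $(\varpi,\cdot)|_{\mathfrak{o}^{\times}}$ and $\gamma_{\psi_{\alpha}}|_{\mathfrak{o}^{\times}}$ equal the unique non-trivial quadratic character of $\kappa_{F}^{\times}$, so their product is trivial on $\mathfrak{o}^{\times}$, matching $\tau|_{\mathfrak{o}^{\times}}=1$. Thus Lemma~\ref{lemma:computation Psi M(tau,s)f_s with p not 2} applies with $A(1,\psi_{\alpha},s)=q^{-s}\gamma_{\psi_{\alpha}}^{-1}(\varpi)$, $\Lambda(1,\psi_{\alpha},s)=1$, $\tau(-1)=1$, and $L(2s-1,1)=(1-q^{1-2s})^{-1}$. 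The algebraic simplification $(q-1)(L(2s-1,1)-1-\tfrac{1}{q-1})=-(1-q^{2-2s})/(1-q^{1-2s})$ then yields
\begin{align*}
\frac{\Psi(W,\phi,M(1,s)f_{s})}{\Psi(W,\phi,f_{s})}=-q^{-s}\gamma_{\psi_{\alpha}}^{-1}(\varpi)\,\frac{1-q^{2-2s}}{1-q^{1-2s}}.
\end{align*}

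It remains to compute $\gamma(2s-1,1,(\psi_{\alpha})_{2})$ explicitly. Since $|2\alpha|=1$, the character $(\psi_{\alpha})_{2}=\psi_{2\alpha}$ has the same level as $\psi$; a standard Tate computation (using the test function $\mathbf{1}_{\mathfrak{o}}$ and the self-dual Haar measure with $\mathrm{vol}(\mathfrak{o})=q^{1/2}$) gives $\epsilon(s,1,(\psi_{\alpha})_{2})=q^{s-1/2}$, so
\begin{align*}
\gamma(2s-1,1,(\psi_{\alpha})_{2})=q^{2s-3/2}\,\frac{1-q^{1-2s}}{1-q^{2s-2}}.
\end{align*}
Multiplying this by the ratio above, the $(1-q^{1-2s})$ factors cancel; applying the elementary identity $(1-q^{2-2s})/(1-q^{2s-2})=-q^{2-2s}$ collapses the remaining $q$-powers via $s-\tfrac32+2-2s=\tfrac12-s$ to produce the claimed value. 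The only delicate point in this proof is fixing the sign convention in the Tate computation of $\epsilon(s,1,\psi)$: once this is checked, the remainder is bookkeeping of signs and exponents, all of which are forced by Lemmas~\ref{lemma:computation M(tau,s)f_s with p not 2} and~\ref{lemma:computation Psi M(tau,s)f_s with p not 2} together with the structure of \eqref{def2:gamma}.
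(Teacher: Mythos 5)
Your proof is correct and takes essentially the same route as the paper: you specialize \eqref{def2:gamma} with $\tau=1$, correctly observe that the trivial character falls into the first case of Lemma~\ref{lemma:computation M(tau,s)f_s with p not 2} (since both $(\varpi,\cdot)$ and $\gamma_{\psi_{\alpha}}$ restrict to the unique non-trivial quadratic character of $\mathfrak{o}^{\times}/(1+\mathfrak{p})$), invoke Lemma~\ref{lemma:computation Psi M(tau,s)f_s with p not 2}, and then carry out the same algebraic cancellations with the Tate $\gamma$-factor as the paper does. The only cosmetic difference is that you collapse $\gamma_{\psi_{\alpha}}(-1)\gamma(\psi_{\alpha})=\gamma(\psi_{\alpha})^{-1}$ at the outset rather than at the end.
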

\begin{proof}
By Lemma~\ref{lemma:computation Psi M(tau,s)f_s with p not 2}, \eqref{def2:gamma} and since
$\pi(-I_{2l})=\omega(-I_{2l})$,
\begin{align*}
& \gamma(s, \pi\times 1,  \psi_{\alpha})  \\&=
\omega(-I_{2l})c(s,l,\tau)
\gamma_{\psi_{\alpha}}(-1)\gamma(\psi_{\alpha})\gamma(2s-1,\tau^2,(\psi_{\alpha})_2)\frac{\Psi(W,\phi,M(\tau,s)f_s)}
{\Psi(W,\phi,f_s)}
\\&=\omega(-I_{2l})
\gamma_{\psi_{\alpha}}(-1)\gamma(\psi_{\alpha})\epsilon(2s-1,1,\psi_{\alpha})\frac{1-q^{1-2s}}{1-q^{2s-2}}\frac{\Psi(W,\phi,M(\tau,s)f_s)}
{\Psi(W,\phi,f_s)}
\\&=\omega(-I_{2l})
\gamma_{\psi_{\alpha}}(-1)\gamma(\psi_{\alpha})q^{-s}\gamma_{\psi_{\alpha}}^{-1}(\varpi)\epsilon(2s-1,1,\psi_{\alpha})\frac{1-q^{1-2s}}{1-q^{2s-2}}
(q-1)\\&\quad\times[(1-q^{1-2s})^{-1}-1-(q-1)^{-1}]
\\&=\omega(-I_{2l})
\gamma(\psi_{\alpha})^{-1}\gamma_{\psi_{\alpha}}^{-1}(\varpi)q^{1/2-s},
\end{align*}
where we used $\epsilon(2s-1,1,\psi_{\alpha})=q^{2s-3/2}$ (\cite[Proposition~\S~23.5]{BH06}).
\end{proof}

We conclude:
\begin{itemize}
\item $\xi(x) = \tilde{\chi}^2(\iota(x))$, for all $x \in 1 + \mathfrak{p}_E$.
\item $\xi|_{\kappa_F^{\times}} = \tau_{\alpha}|_{\kappa_F^{\times}} \otimes (\mathrm{det}(\mathrm{Ind}_{W_E}^{W_F} 1_E))^{-1}$.
\item $\xi(\zeta) =  \omega(-I_{2l}) \gamma(\psi_{\alpha})^{-1}\gamma_{\psi_{\alpha}}^{-1}(\varpi) \gamma(s, \tau_{\alpha}, \psi_{\alpha})^{-1}  \lambda_{E/F}(\psi_{\alpha})^{-1}$.
\end{itemize}

This character $\xi$ depends on $\omega$ and $\alpha$ (and $\varpi$, though we have suppressed the notation $\varpi$ throughout), so we denote it $\xi_{\alpha}^{\omega}$. Thus $\varphi_1=\Ind_{W_E}^{W_F}(\xi_{\alpha}^{\omega})$, and
the Langlands paramter for $\pi = \pi_{\alpha}^{\omega}$ is $\varphi_{\pi} = \mathrm{Ind}_{W_E}^{W_F} \xi_{\alpha}^{\omega} \oplus \tau_{\alpha}$. This completes the proof of Theorem~\ref{thm:main param}.

\begin{rmk}\label{rmk:fullparameter}
Assume that $p \neq 2$ and $p \mid l$.  Then the computation of $\varpi_{\alpha, l}$ did not depend on whether or not $p$ divided $l$.  The following conditions then characterize $\Pi_1$ uniquely (following again the notation of \cite{AL14}): $\Pi_1$ is a simple supercuspidal representation of $\GL_{2l}$,  the uniformizer for $\Pi_1$ is $\varpi_{\alpha,l}$, the central character satisfies $\omega_{\Pi_1} = \tau_{\alpha}|_{F^{\times}}$, and $\gamma(s, \Pi_1, \psi_{\alpha}) = \omega(-I_{2l})
\gamma(\psi_{\alpha})^{-1}\gamma_{\psi_{\alpha}}^{-1}(\varpi) \gamma(s, \tau_{\alpha}, \psi_{\alpha})^{-1}$.  Then the work of \cite{BH14} gives the parameter of $\Pi_1$ explicitly, and together with $\tau_{\alpha}$ we get the full parameter of $\pi_{\alpha}^{\omega}$.
\end{rmk}

\subsection{The case $F=\mathbb{Q}_2$}\label{Q2}
In this case there is only one simple supercuspidal representation up to isomorphism, since $Z\subset I^+$ (see Section~\ref{simplesupercuspidaldefinition}) and $\kappa_F^{\times} / (\kappa_F^{\times})^2$ is trivial.  We denote this representation by $\pi$. Moreover, for convenience in the computations, we will fix a specific uniformizer $\varpi=2$. The end result does not depend on our choice of uniformizer, because of the aforementioned uniqueness. Also note that since in $\mathbb{Q}_2$, $\mathfrak{o}^{\times}=1+\mathfrak{p}$, a tamely ramified quasi-character is in fact unramified.
Throughout this section, we will regularly refer to \cite[Lemma 3.5]{Dani} for computations.

We still take the function $W$ of Section~\ref{section:The computation of the gamma-factor}, and we define
$f_s$ as in the $p\ne2$ case except that we now use the subgroup
\begin{align*}
\mathcal{N}=\left\{\left(\begin{array}{cc}1+\mathfrak{p}^3&\mathfrak{p}^2\\\mathfrak{p}^3&1+\mathfrak{p}^3\end{array}\right)\in\Sp_2\right\}.
\end{align*}
As opposed to the case of odd $p$, here we must prove directly that the cover is split over $\mathcal{N}$, in fact
the trivial section provides a splitting:
\begin{prop}
For any $v,v'\in \mathcal{N}$, $\langle v,1\rangle\langle v',1\rangle=\langle vv',1\rangle$.
\end{prop}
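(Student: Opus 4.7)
I would write $v = \left(\begin{smallmatrix}a & b \\ c & d\end{smallmatrix}\right)$ and $v' = \left(\begin{smallmatrix}a' & b' \\ c' & d'\end{smallmatrix}\right)$ with entries in the congruence classes defining $\mathcal{N}$, and first check by direct multiplication that $vv' \in \mathcal{N}$. The Kubota cocycle formula \eqref{eq:Kubota formula} then reduces the claim to showing
\begin{align*}
\sigma(v, v') = \left(\frac{\mathbf{x}(vv')}{\mathbf{x}(v)},\ \frac{\mathbf{x}(vv')}{\mathbf{x}(v')}\right) = 1.
\end{align*}
The essential arithmetic input, specific to $F = \mathbb{Q}_2$, is that $1 + \mathfrak{p}^3 = 1 + 8\mathbb{Z}_2 = (\mathbb{Z}_2^\times)^2$; in particular the diagonal entries of any element of $\mathcal{N}$ are squares in $F^\times$, and the Hilbert symbol trivializes whenever one of its arguments lies in $1+\mathfrak{p}^3$.

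The proof proceeds by case analysis on the vanishing of $c$, $c'$, and the lower-left entry $ca' + dc'$ of $vv'$. I expect the degenerate subcases to be quick: if one of $c, c'$ is zero, the corresponding $\mathbf{x}$ value lies in $1+\mathfrak{p}^3$ and is a square, collapsing $\sigma$ to $1$ by bilinearity. If $c, c' \neq 0$ but $ca' + dc' = 0$, then $\mathbf{x}(vv') = cb' + dd' \in 1+\mathfrak{p}^3$ is a square and $\sigma$ reduces to $(c, c')$; the relation $c = -c'd/a'$ combined with the identities $(-x, x) = 1$, $(x, x) = (x, -1)$, and the fact that $d/a' \in 1+\mathfrak{p}^3$ is a square, yields $(c, c') = 1$.

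The principal case is $c, c', ca'+dc' \in \mathfrak{p}^3 \setminus \{0\}$. Setting $r = c'/c \in F^\times$, one computes $\mathbf{x}(vv')/\mathbf{x}(v) = a' + dr$ and $\mathbf{x}(vv')/\mathbf{x}(v') = (a'+dr)/r$, and the standard identities $(X, X) = (X, -1)$ and $(X, r)^2 = 1$ yield
\begin{align*}
\sigma(v, v') = (a' + dr, -r).
\end{align*}
I would then invoke the isotropy characterization of the Hilbert symbol: $(X, Y) = 1$ iff $X U^2 + Y V^2 = W^2$ admits a nontrivial solution in $F$. Because $a', d \in 1+\mathfrak{p}^3$ are squares in $\mathbb{Q}_2$, the triple $(U, V, W) = (1, \sqrt{d}, \sqrt{a'})$ lies in $F^3$ and satisfies $(a'+dr) \cdot 1 - r \cdot d = a' = W^2$, hence $(a'+dr, -r) = 1$.

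The only nonformal step is spotting the substitution $V = \sqrt{d}$, $W = \sqrt{a'}$, which is available precisely because the level $\mathfrak{p}^3$ imposed on the diagonal of $\mathcal{N}$ is calibrated to make $1+\mathfrak{p}^3$ consist of squares in $\mathbb{Q}_2^\times$. The remainder is routine bookkeeping in the Kubota cocycle and the Hilbert symbol; I anticipate no serious obstacle.
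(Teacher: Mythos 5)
Your proof is correct, and the case structure (one of $c,c'$ vanishing; both nonzero with $ca'+dc'=0$; both nonzero with $ca'+dc'\neq 0$) matches the paper's, with the two degenerate cases handled in essentially the same way. The genuine difference is in the principal case. Both you and the paper first reduce the Kubota cocycle to a single Hilbert symbol of the form $(a'+dr,\,-r)$ with $r=c'/c$ (the paper first normalizes $|t|\le 1$ by symmetry and writes it as $(-t,\,a'+dt)$). From there the paper rewrites $a'+dt=1+t+r$, multiplies by the square $1+r$, and invokes the identity $(1-x,x)=1$; you instead observe that $a'$ and $d$ are squares, say $a'=A^2$ and $d=D^2$, and exhibit the explicit isotropic vector $(U,V,W)=(1,D,A)$ for the conic $(a'+dr)U^2 - rV^2 = W^2$, since $(a'+dr)-rd=a'=A^2$. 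Your argument is a one-line verification that bypasses the $|t|\le1$ normalization, makes no further algebraic manipulation of $a'+dt$, and makes it completely transparent that the choice of congruence level $\mathfrak{p}^3$ on the diagonal of $\mathcal{N}$ — i.e., $1+\mathfrak{p}^3\subset(\mathbb{Q}_2^\times)^2$ — is precisely what makes the cocycle split. Both routes reach the correct conclusion, but yours is arguably the more robust and self-contained of the two.
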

\begin{proof}
Write $v=\left(\begin{smallmatrix}         a & b \\         c & d       \end{smallmatrix}\right)$ and $v'=\left(\begin{smallmatrix}         a' & b' \\         c' & d'       \end{smallmatrix}\right)$. Since $d,a'\in F^{\times2}$, the result
$\langle v,1\rangle\langle v',1\rangle=\langle vv',1\rangle$ is clear from
\eqref{eq:Kubota formula} when $c=0$ or $c'=0$. Now assume $c,c'\ne0$.

Consider the case $ca'+dc'\ne0$.
Using $(x,-x)=1$, We can rewrite \eqref{eq:Kubota formula} in the form
\begin{align*}
\sigma(v,v')=(c,c')(-cc',ca'+dc')=(c,c')(-cc',c(a'+dc'c^{-1}))
=(-cc',a'+dc'c^{-1}).
\end{align*}
By symmetry, we may assume $c'=ct$ for $|t|\leq1$. Then
$(-cc',a'+dc'c^{-1})=(-t,a'+dt)$ and we can write
$a'+dt=1+t+r$ for $r\in\mathfrak{p}^3$. Now $1+r\in F^{\times2}$ hence
\begin{align*}
(-t,a'+dt)=(-t(1+r),a'+dt)=(1-(a'+dt),a'+dt)=1,
\end{align*}
where we used $(1-x,x)=1$.

Now assume $ca'+dc'=0$.  Then $c=-dc'{a'}^{-1}=-c't$ for $t\in F^{\times2}$ (because $d,a'\in F^{\times2}$), and
also $cb'+dd'\in F^{\times2}$. We obtain
\begin{align*}
\sigma(v,v')=(c,c')(-cc',cb'+dd')=(c,c')=(-c',c')=1.
\end{align*}
This completes the proof.
\end{proof}
Let $\phi$ be the characteristic function of $\mathfrak{o}$.
\begin{lem}\label{lemma:computation Psi with p equals 2}
$\Psi(W, \phi, f_s) =  2^{-1}\beta_{\psi}^{-2} \gamma_{\psi}(-1) \mathrm{vol}(\mathfrak{o})^2 \mathrm{vol}^{\times}(1 + \mathfrak{p}) \mathrm{vol}(\mathfrak{p}^3) \mathrm{vol}(\mathfrak{p})^{l-1}$.
\end{lem}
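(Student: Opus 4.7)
The strategy is to parallel the proof of Lemma~\ref{lemma:computation Psi with p not 2}, adapted to the dyadic setting. Writing the $dg$-integration over $\overline{B}_1$ and setting $b=\left(\begin{smallmatrix}a & 0 \\ a^{-1}c & a^{-1}\end{smallmatrix}\right)$, the support of $W({}^{\omega_{l-1,1}}(rxg))$ inside $N_l Z I^+$ forces $a \in 1+\mathfrak{p}$, $c \in \mathfrak{p}$, $x \in \mathfrak{p}$, and the coordinates of $r_1$ in $\mathfrak{p}$; on this locus $W({}^{\omega_{l-1,1}}(rxg))=\psi^{-1}(a^{-1}c\varpi^{-1})$ and $\delta_B(b)=|a|^2=1$.

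Next I impose the support of $f_s$, namely $b \in \widetilde{B}_1 \mathcal{N}$. Writing $b=\mathrm{diag}(a,a^{-1})\left(\begin{smallmatrix}1&0\\c&1\end{smallmatrix}\right)$ and using the new dyadic $\mathcal{N}$ (whose lower-left slot is $\mathfrak{p}^3$), this forces $c\in\mathfrak{p}^3$. Consequently $a^{-1}c\varpi^{-1}\in\mathfrak{p}^2\subset\mathfrak{p}$, so $\psi^{-1}(a^{-1}c\varpi^{-1})=1$. Using \eqref{eq:decomposing b in the cover} together with the preceding Proposition (which splits the cover trivially on $\mathcal{N}$, crucially exploiting $1+\mathfrak{p}^3\subset F^{\times 2}$) and the definition of $f_s$, one computes $f_s(b,1)=(a^{-1},c)\gamma_\psi(a)$, using also that $\tau$ is unramified on $\mathfrak{o}^\times$ and $|a|=1$.

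For the Weil representation term I apply Lemma~\ref{lemma:computation weil} with $\phi=1_\mathfrak{o}$. Since $|2|=q^{-1}$ in $\mathbb{Q}_2$, the character $\psi_2$ has conductor $\mathfrak{o}$ (rather than $\mathfrak{p}^{-2}$ as in the odd residue characteristic case), so the $\psi_2$-dual of $\mathfrak{o}$ is $\mathfrak{o}$ itself. The inner Fourier integral $\int_F \phi(z)\psi(-2yz)\,dz$ therefore equals $\mathrm{vol}_{\psi_2}(\mathfrak{o})\cdot 1_\mathfrak{o}(y)$, and on the outer support $y\in\mathfrak{o}$, with $a\in 1+\mathfrak{p}$, $x\in\mathfrak{p}$, $c\in\mathfrak{p}^3$, both $2axy\in\mathfrak{p}^2$ and $cy^2\in\mathfrak{p}^3$ lie in $\mathfrak{p}$, the kernel of $\psi$, leaving the integrand identically $1$. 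The outer integral collapses to $\mathrm{vol}_{\psi_2}(\mathfrak{o})$, and the product $\mathrm{vol}_{\psi_2}(\mathfrak{o})^2$ equals $2^{-1}\mathrm{vol}(\mathfrak{o})^2$ after converting from the $\psi_2$ self-dual measure to the $\psi$ one. Hence $\omega_\psi(b)\phi(x)=2^{-1}(a^{-1},c)\beta_\psi^{-2}\gamma_\psi^{-1}(a)\gamma_\psi(-1)\mathrm{vol}(\mathfrak{o})^2$.

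Assembling the three factors, the Hilbert symbols contribute $(a^{-1},c)^2=1$ and the Weil factors contribute $\gamma_\psi^{-1}(a)\gamma_\psi(a)=1$, so both cancel between $\omega_\psi(b)\phi(x)$ and $f_s(b,1)$. The remaining integrand is the constant $2^{-1}\beta_\psi^{-2}\gamma_\psi(-1)\mathrm{vol}(\mathfrak{o})^2$, and integration over $r_1\in\mathfrak{p}^{l-2}$, $x\in\mathfrak{p}$, $a\in 1+\mathfrak{p}$, and $c\in\mathfrak{p}^3$ yields the factor $\mathrm{vol}(\mathfrak{p})^{l-1}\mathrm{vol}^\times(1+\mathfrak{p})\mathrm{vol}(\mathfrak{p}^3)$, giving the claimed formula. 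The main obstacle is the dyadic bookkeeping: because $1+\mathfrak{p}\not\subset F^{\times 2}$ in $\mathbb{Q}_2$, neither $(a^{-1},c)$ nor $\gamma_\psi(a)$ is individually trivial on the range of $a$, so the argument relies essentially on the explicit cancellation just described and on the careful choice of $\mathcal{N}$ that lets the cover split trivially.
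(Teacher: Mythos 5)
Your proof is correct and follows essentially the same route as the paper: the identical $\overline{B}_1$ parametrization, the identical support constraints from $W$ and $f_s$, the identical application of Lemma~\ref{lemma:computation weil} and the volume conversion between the $\psi$- and $\psi_2$-self-dual measures, and the identical cancellation of $(a^{-1},c)^2$ and $\gamma_\psi^{-1}(a)\gamma_\psi(a)$. (One small aside: your parenthetical ``rather than $\mathfrak{p}^{-2}$ as in the odd residue characteristic case'' is not quite right --- for $p$ odd, $|2|=1$ so $\psi_2$ has conductor $\mathfrak{p}$, the same as $\psi$ --- but this remark plays no role in the argument.)
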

\begin{proof}
Writing the $dg$-integration over $\overline{B}_1$,
\begin{align*}
\Psi(W,\phi,f_s)=\int_{\overline{B}_1}\int_{R^{l,1}} \int_{F}
W({}^{\omega_{l-1,1}}(rxg)) \omega_{\psi}(g) \phi(x) f_s(g,1) \delta_{B_1}(b)\,d x \,d r \,d b.
\end{align*}
The support of $W$ is contained in $N_l Z I^+$, which equals $N_l I^+$ since $F = \mathbb{Q}_2$. Looking at
${}^{\omega_{l-1,1}}(rxg)$ with $b=\left(\begin{smallmatrix}a&0\\a^{-1}c&a^{-1}\end{smallmatrix}\right)\in \overline{B}_1$,
we see that $W$ vanishes unless $a\in 1+\mathfrak{p}$, $x,c\in\mathfrak{p}$ and the coordinates of
$r_1$ belong in $\mathfrak{p}$.  We may also assume $c\ne0$. Then $\delta_{B_1}(b)\equiv1$ and
\begin{align}\label{eq:W p equals 2}
W({}^{\omega_{l-1,1}}(rxg))=\psi^{-1}(a^{-1}c2^{-1}),
\end{align}
recalling that $\varpi = 2$.  By Lemma~\ref{lemma:computation weil} and since $\phi$ is the characteristic function of $\mathfrak{o}$,
\begin{align}\label{eq:phi_s p equals 2}
&\omega_{\psi}(b) \phi(x)=(a^{-1}, c) \beta_{\psi}^{-2}\gamma_{\psi}^{-1}(a)\gamma_{\psi}(-1)(2^{-1}\mathrm{vol}(\mathfrak{o})^2).
\end{align}
Here note that the measures from Lemma~\ref{lemma:computation weil} assign the volume $|2|^{1/2}\mathrm{vol}(\mathfrak{o})$ to $\mathfrak{o}$, because they are self dual with respect to $\psi_2$. Also for $a \in 1 + \mathfrak{p}$, $f_s(b,1)=0$ unless $c \in \mathfrak{p}^3$, in which case
\begin{align}\label{eq:f_s p equals 2}
f_s(\langle\left(\begin{smallmatrix}a&0\\0&a^{-1}\end{smallmatrix} \right) \left( \begin{smallmatrix}1&0\\c&1\end{smallmatrix}\right),1\rangle , 1) = (a^{-1},c)\gamma_{\psi}(a).
\end{align}
Therefore by \eqref{eq:W p equals 2}--\eqref{eq:f_s p equals 2},
\begin{align*}
\Psi(W, \phi, f_s) = 2^{-1}\beta_{\psi}^{-2} \gamma_{\psi}(-1) \mathrm{vol}(\mathfrak{o})^2 \mathrm{vol}^{\times}(1 + \mathfrak{p})
\mathrm{vol}(\mathfrak{p}^3) \mathrm{vol}(\mathfrak{p})^{l-1},
\end{align*}
as claimed.
\end{proof}
\begin{lem}\label{lemma:computation M(tau,s)f_s with p equals 2}
Let $a\in 1+\mathfrak{p}$, $0\ne c\in\mathfrak{p}$, and write $c\in c_02+ c_1 2^2 + \mathfrak{p}^3$ with $c_0, c_1\in\mathfrak{o}$. Then
\begin{align*}
[M(\tau,s)f_s]\left(\left(\begin{smallmatrix}a &  \\       a^{-1}c & a^{-1}\end{smallmatrix}\right),1 \right)=
\begin{cases}
2 \cdot A(\tau,\psi,s)  \gamma_{\psi}^{-1}(-ac) &c_0\neq0,\\
0 & c_0 = 0, c_1 \neq 0,
\end{cases}
\end{align*}
and if $c_0 = c_1 = 0$, then
\begin{align*}
&[M(\tau,s)f_s]\left(\left(\begin{smallmatrix}a &  \\       a^{-1}c & a^{-1}\end{smallmatrix}\right),1 \right)\\&=
2 A(\tau,\psi,s)    \gamma_{\psi}^{-1}(2)  (-c,a) \gamma_{\psi}^{-1}(a)   (1 + \psi(2^{-1}))  (L(2s-1,\tau^2)-1).
\end{align*}
Here
\begin{align*}
A(\tau,\psi,s) =     2^{-s} \tau(2) \mathrm{vol}^{\times}(1 + \mathfrak{p}^3).
\end{align*}
\end{lem}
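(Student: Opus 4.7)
The plan is to apply Lemma~\ref{lemma:computation M tau on b} to $b=\left(\begin{smallmatrix}a & \\ a^{-1}c & a^{-1}\end{smallmatrix}\right)$, and then cut the resulting integral by the support of $f_s$. Since $f_s$ is supported on $\widetilde{B}_1 \mathcal{N}$ with the new $\mathcal{N}$ of this section, the integrand vanishes unless $\left(\begin{smallmatrix}1 & 0\\ a^2 u^{-1}+c & 1\end{smallmatrix}\right)\in\mathcal{N}$, i.e., $a^2 u^{-1}+c\in\mathfrak{p}^3$. The whole proof is then an analysis of this support condition by cases on the 2-adic digits $(c_0,c_1)$ of $c$, coupled with the explicit value $f_s(\cdot,-1)=\tau(-1)=1$ on the support.

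\textbf{Case $c_0\neq 0$ (so $v(c)=1$).} The constraint $a^2u^{-1}+c\in\mathfrak{p}^3$ pins $u^{-1}$ to the single coset $-ca^{-2}(1+\mathfrak{p}^2)$. On this coset $|u|^{1/2-s}=2^{1/2-s}$; since $\tau$ is unramified, $\tau(u^{-1})=\tau(2)$; and the Hilbert symbol $(-uc,a)=(v_0^{-1},a)$ as well as $\gamma_{\psi}^{-1}(u^{-1}a)$ are constant on $1+\mathfrak{p}^2$ (up to identities from \cite[Lemma~3.5]{Dani}, which control how the Weil index behaves on $\mathfrak{o}^{\times}$ in $\mathbb{Q}_2$). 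Collecting the resulting constant with the volume factor $\mathrm{vol}^{\times}(1+\mathfrak{p}^2)=2\,\mathrm{vol}^{\times}(1+\mathfrak{p}^3)$, the integrand evaluates to $2\,A(\tau,\psi,s)\,\gamma_{\psi}^{-1}(-ac)$ after simplification via $\gamma_{\psi}^{-1}(u^{-1}a)=\gamma_{\psi}^{-1}(-ac)\cdot(\text{trivial factors on the coset})$.

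\textbf{Case $c_0=0$, $c_1\neq 0$ (so $v(c)=2$).} Now the constraint only forces $u^{-1}\in -ca^{-2}(1+\mathfrak{p})$, i.e., $v_0 \in 1+\mathfrak{p}$ rather than $1+\mathfrak{p}^2$. One then has to integrate a nontrivial function of $v_0$ over $(1+\mathfrak{p})/(1+\mathfrak{p}^3)\simeq \mathbb{Z}/2\oplus\mathbb{Z}/2$. The heart of this case, and the main technical obstacle of the lemma, is to show that the character $v_0\mapsto (v_0^{-1},a)\gamma_{\psi}^{-1}(-ca^{-1}v_0)$ is nontrivial on this quotient, so the sum over its four representatives vanishes. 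This is established by expanding the Weil index using $\gamma_{\psi}(xy)=\gamma_{\psi}(x)\gamma_{\psi}(y)(x,y)$, combining with the Hilbert symbol contribution to form an honest character of $(1+\mathfrak{p})/(1+\mathfrak{p}^3)$, and then appealing to \cite[Lemma~3.5]{Dani} to verify that this character is nontrivial precisely because $v(c)=2$.

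\textbf{Case $c_0=c_1=0$ (so $c\in\mathfrak{p}^3$).} Here $c$ is absorbed into $\mathfrak{p}^3$ and the support condition reduces to $v(u^{-1})\geq 3$. Decompose according to $v(u^{-1})=k\geq 3$, writing $u^{-1}=2^k\,\eta$ with $\eta\in\mathfrak{o}^{\times}$, and factor the integral as a sum over $k$ of an integral over $\eta\in\mathfrak{o}^{\times}$. The $\eta$-integral contributes $\mathrm{vol}^{\times}(1+\mathfrak{p}^3)$ times a constant depending on the parity of $k$, where the odd-$k$ terms carry the extra twist $\psi(2^{-1})$ coming from the interaction of $\gamma_{\psi}$ with the uniformizer — this is precisely the origin of the factor $1+\psi(2^{-1})$ in the claim. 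Summing the resulting geometric series in $\tau^2(2)\,2^{1-2s}$ and subtracting off the two missing terms (for $k=1,2$, which are precisely cases (i) and (ii)), one obtains the factor $L(2s-1,\tau^2)-1$ and the formula in the statement. The remaining Hilbert symbol $(-c,a)$ and Weil factor $\gamma_{\psi}^{-1}(a)$ come from the $a$-dependent part of $\gamma_{\psi}^{-1}(u^{-1}a)$, which is now genuinely nontrivial because $c$ is not a unit times a square.
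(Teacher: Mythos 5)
Your proposal follows the paper's proof almost verbatim: apply Lemma~\ref{lemma:computation M tau on b}, impose the support condition $a^2u^{-1}+c\in\mathfrak{p}^3$ coming from $\mathcal{N}$, and split into the three cases $v(c)=1$, $v(c)=2$, $v(c)\geq 3$, evaluating the resulting character sums on $(1+\mathfrak{p}^i)/(1+\mathfrak{p}^3)$ via \cite[Lemma~3.5]{Dani}. Two small slips in the narration, neither of which affects the plan's correctness, are worth flagging. First, in case (i) the Hilbert symbol and Weil factor are not individually constant on the coset $1+\mathfrak{p}^2$; rather the relevant product $\gamma_{\psi}^{-1}(v)(-c,v)$ equals $1$ on both representatives $\{1,5\}$ (this is exactly what \cite[Lemma~3.5]{Dani} checks when $v(c)=1$), which is how the factor $2\,\mathrm{vol}^{\times}(1+\mathfrak{p}^3)$ arises. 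Second, in case (iii) the ``$-1$'' in $L(2s-1,\tau^2)-1$ has nothing to do with ``cases (i) and (ii)'': those are regimes of $v(c)$, whereas the geometric series runs over $k=v(u^{-1})$ (with $c\in\mathfrak{p}^3$ fixed throughout). Only odd $k\geq 3$ contribute, and writing $k=2m+1$ with $m\geq 1$ gives $\sum_{m\geq 1}(\tau^2(2)2^{1-2s})^m=(1-\tau^2(2)2^{1-2s})^{-1}-1$; the subtracted $1$ is the hypothetical $m=0$ (i.e.\ $k=1$) term, and the $k=2$ term already vanishes on parity grounds.
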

\begin{proof}
By Lemma~\ref{lemma:computation M tau on b}, we need to compute
\begin{align}\label{integral}
 \mathrm{vol}(\mathfrak{o}^{\times}) \int_{F^{\times}}  (-uc,a)\tau(u^{-1}) |u|^{\frac{1}{2}-s}  \gamma_{\psi}^{-1}(u^{-1} a) f_s \left(
\begin{pmatrix}
1 & 0\\
a^2 u^{-1} +c & 1
\end{pmatrix}, -1\right) \,d^{\times} u.
\end{align}
We recall again that $F = \mathbb{Q}_2$, so that $q = 2$. In particular $\mathrm{vol}(\mathfrak{o}^{\times})=2^{-1/2}$.  We may assume $a^2 u^{-1} + c \in \mathfrak{p}^3$, since otherwise $f_s$ vanishes. Since $a \in 1 + \mathfrak{p}$, one can see that the condition $a^2 u^{-1} + c \in \mathfrak{p}^3$ then implies that $u^{-1} \equiv -c \ (\mathrm{mod} \ \mathfrak{p}^3)$.

First assume that $c \in \mathfrak{p} \setminus \mathfrak{p}^2$.  Then $u^{-1} = -c \cdot v$, with $v \in 1 + \mathfrak{p}^2$.  One can see that \eqref{integral} simplifies to
\begin{align*}
\tau(-1)  2^{-s} \int_{1 + \mathfrak{p}^2}  \tau(-c v) \gamma_{\psi}^{-1}(-c v a) (v^{-1}, a)\, d^{\times} v.
\end{align*}
Since $\tau$ is tamely ramified, we have $\tau(-1)=1$ whence $\tau(-cv) = \tau(2)$, and we get
\begin{align*}
2^{-s} \tau(2) \int_{1 + \mathfrak{p}^2} \gamma_{\psi}^{-1}(-c v a) (v, a) \,d^{\times} v.
\end{align*}
Also because $\gamma_{\psi}^{-1}(-cva) = \gamma_{\psi}^{-1}(-ca) \gamma_{\psi}^{-1}(v) (-ca, v)$, this last integral equals
\begin{align*}
2^{-s} \tau(2)  \gamma_{\psi}^{-1}(-ca) \int_{1 + \mathfrak{p}^2} \gamma_{\psi}^{-1}(v) (-c, v) \,d^{\times} v.
\end{align*}
The $d^{\times}v$-integral is invariant under $1 + \mathfrak{p}^3$, and since $(1 + \mathfrak{p}^2) / (1 + \mathfrak{p}^3) = \{ 1, 5 \}$, using \cite[Lemma 3.5]{Dani} we see that it equals $2 \mathrm{vol}^{\times}(1 + \mathfrak{p}^3)$. In summary, \eqref{integral} equals
\begin{align*}
2^{-s} \tau(2)  \gamma_{\psi}^{-1}(-ca) \cdot 2 \mathrm{vol}^{\times}(1 + \mathfrak{p}^3).
\end{align*}

We now assume that $c \in \mathfrak{p}^2 \setminus \mathfrak{p}^3$.  Then $u^{-1} = -c \cdot v$, with $v \in 1 + \mathfrak{p}$.  The same analysis as in $\mathfrak{p} \setminus \mathfrak{p}^2$ gives us that \eqref{integral} simplifies to
\begin{align*}
 2^{1/2-2s} \tau^2(2)  \gamma_{\psi}^{-1}(-ca) \int_{1 + \mathfrak{p}} \gamma_{\psi}^{-1}(v) (-c, v) \,d^{\times} v.
\end{align*}
Again the $d^{\times} v$-integral is invariant under $1 + \mathfrak{p}^3$, but now $(1 + \mathfrak{p}) / (1 + \mathfrak{p}^3) = \{ \pm 1, \pm 5 \}$, and by \cite[Lemma 3.5]{Dani} this integral vanishes.

Finally assume $c \in \mathfrak{p}^3$. Then we must have $u^{-1} \in \mathfrak{p}^3$ and \eqref{integral} equals
\begin{align*}
2^{-1/2} \tau(-1)  (-c, a) \displaystyle\sum_{n=3}^{\infty} \int_{\mathfrak{o}^{\times}} \tau(2^{n} v) |2^{-n} v^{-1}|^{\frac{1}{2}-s} \gamma_{\psi}^{-1}(2^n v a)  (2^{-n} v^{-1},a) \,d^{\times} v.
\end{align*}
Since $\tau$ is tamely ramified and $F = \mathbb{Q}_2$ , we obtain
\begin{align*}
&2^{-1/2}  (-c,a) \displaystyle\sum_{n=3}^{\infty} \tau^{n}(2) 2^{n(1/2 - s)}  \int_{1 + \mathfrak{p}} \gamma_{\psi}^{-1}(2^n v a)  (2^{-n} v^{-1}, a) \,d^{\times} v\\&
=  2^{-1/2} (-c,a) \gamma_{\psi}^{-1}(a) \displaystyle\sum_{n=3}^{\infty} \tau^{n}(2) 2^{n(1/2 - s)}   \gamma_{\psi}^{-1}(2^n)  \int_{1 + \mathfrak{p}}  \gamma_{\psi}^{-1}(v) (2^n, v) \,d^{\times} v.
\end{align*}
As above, we compute the $d^{\times}v$-integral using the representatives $\{\pm 1, \pm 5 \}$. If $n$ is even, by \cite[Lemma 3.5]{Dani},
\begin{align*}
\int_{1 + \mathfrak{p}}  \gamma_{\psi}^{-1}(v) (2^n, v) \,d^{\times} v = \mathrm{vol}^{\times}(1 + \mathfrak{p}^3) (\gamma_{\psi}^{-1}(1) + \gamma_{\psi}^{-1}(-1) + \gamma_{\psi}^{-1}(5) + \gamma_{\psi}^{-1}(-5)) = 0.
\end{align*}
For odd $n$, by \textit{loc. cit.} the integral equals $\mathrm{vol}^{\times}(1 + \mathfrak{p}^3) (2 + 2 \psi(2^{-1}))$.
Also $\gamma_{\psi}(2)^{2m+1} = \gamma_{\psi}(2)$, for any integer $m$ (since $(2,2)=1$).
Therefore \eqref{integral} equals
\begin{align*}
 2^{-1/2}   \gamma_{\psi}^{-1}(2) (-c,a) \gamma_{\psi}^{-1}(a) \mathrm{vol}^{\times}(1 + \mathfrak{p}^3) (2 + 2 \psi(2^{-1})) \tau(2) 2^{1/2-s} \left( \frac{1}{1 - \tau^2(2) 2^{1-2s}} - 1 \right).
\end{align*}
Because $\tau$ is unramified, the factor in parentheses is $L(2s-1,\tau^2)-1$.
This completes the proof.
\end{proof}
\begin{lem}\label{lemma:computation Psi M(tau,s)f_s with p equals 2}
\begin{align*}
&\Psi(W,\phi,M(\tau,s)f_s)=2\Psi(W, \phi, f_s)A(\tau,\psi,s)(1 + \psi(2^{-1}))\gamma_{\psi}^{-1}(2)\frac{-1 + \tau^2(2) 2^{2-2s}}{1 - \tau^2(2) 2^{1-2s}}.
\end{align*}
\end{lem}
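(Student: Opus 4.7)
The plan is to compute $\Psi(W,\phi,M(\tau,s)f_s)$ by substituting the formulas from Lemma~\ref{lemma:computation M(tau,s)f_s with p equals 2} into the definition of $\Psi$, mirroring the proof of Lemma~\ref{lemma:computation Psi with p not 2} (the $p\neq 2$ analogue). First I would restrict the integration to the support of $W$, which, exactly as in Lemma~\ref{lemma:computation Psi with p equals 2}, forces $a \in 1+\mathfrak{p}$, $x \in \mathfrak{p}$, $c \in \mathfrak{p}$ and $r_1$ with entries in $\mathfrak{p}$, and produces the Whittaker factor $\psi^{-1}(a^{-1}c\cdot 2^{-1})$. The Weil factor $\omega_\psi(b)\phi(x)$ and the $dx$, $dr$, $da^\times$ integrations then produce the same volume prefactors as those appearing in $\Psi(W,\phi,f_s)$; all that remains is a $dc$-integral over $\mathfrak{p}$ whose integrand includes $[M(\tau,s)f_s](b,1)$.

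The next step is to split this $dc$-integral into the three regions appearing in Lemma~\ref{lemma:computation M(tau,s)f_s with p equals 2}. The middle region $c\in\mathfrak{p}^2\setminus\mathfrak{p}^3$ contributes nothing. On $\mathfrak{p}^3$ the character $\psi^{-1}(a^{-1}c\cdot 2^{-1})$ is trivial (since $c\cdot 2^{-1}\in\mathfrak{p}^2\subset\ker\psi$), and the $a$-dependent factors $(a^{-1},c)\gamma_\psi^{-1}(a)$ from $\omega_\psi(b)\phi(x)$ combine with the factors $(-c,a)\gamma_\psi^{-1}(a)$ in Lemma~\ref{lemma:computation M(tau,s)f_s with p equals 2} to give $(a,-c^2)\gamma_\psi^{-2}(a)=(a,-1)^{2}=1$, using the standard identity $\gamma_\psi(a)^2=(a,-1)$. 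Integrating over $c\in\mathfrak{p}^3$ yields the $\mathrm{vol}(\mathfrak{p}^3)$ factor present in $\Psi(W,\phi,f_s)$, so this region contributes exactly
\begin{align*}
2\,\Psi(W,\phi,f_s)\,A(\tau,\psi,s)\,(1+\psi(2^{-1}))\,\gamma_\psi^{-1}(2)\,\bigl(L(2s-1,\tau^2)-1\bigr).
\end{align*}

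For $c\in\mathfrak{p}\setminus\mathfrak{p}^2$ I would write $c=2c_0$ with $c_0\in\mathfrak{o}^\times$, so that $\psi^{-1}(a^{-1}c\cdot 2^{-1})=\psi^{-1}(c_0)=-1$ is constant on $\mathfrak{o}^\times$ (since $\psi$ has level $1$ and $c_0\equiv 1\pmod{\mathfrak{p}}$). The same cocycle bookkeeping as above, applied now to $(a^{-1},c)\gamma_\psi^{-1}(a)\cdot\gamma_\psi^{-1}(-ac)$, shows that all $a$-dependence disappears and leaves the constant $(-1,c)\gamma_\psi^{-1}(-1)\gamma_\psi^{-1}(c)$. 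After extracting $\gamma_\psi^{-1}(2)$ via $\gamma_\psi^{-1}(2c_0)=\gamma_\psi^{-1}(2)\gamma_\psi^{-1}(c_0)(2,c_0)$, the remaining $d^\times c_0$-integral is invariant modulo $1+\mathfrak{p}^3$, and reduces to a sum over the representatives $\{\pm 1,\pm 5\}$. Evaluating this sum via \cite[Lemma 3.5]{Dani} produces the factor $(1+\psi(2^{-1}))$, and after combining with $\gamma_\psi(-1)\gamma_\psi^{-1}(-1)=1$ and the sign from $\psi^{-1}(c_0)=-1$, the contribution of this region is $-2\,\Psi(W,\phi,f_s)\,A(\tau,\psi,s)\,(1+\psi(2^{-1}))\,\gamma_\psi^{-1}(2)$.

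Summing the two contributions gives $2\Psi(W,\phi,f_s)A(\tau,\psi,s)(1+\psi(2^{-1}))\gamma_\psi^{-1}(2)\bigl(L(2s-1,\tau^2)-2\bigr)$, and the claimed expression then follows from the identity $L(2s-1,\tau^2)-2=\frac{-1+\tau^2(2)2^{2-2s}}{1-\tau^2(2)2^{1-2s}}$ (using that $\tau$ is unramified, so $L(2s-1,\tau^2)=(1-\tau^2(2)2^{1-2s})^{-1}$). The main obstacle in executing this plan is the $\mathfrak{p}\setminus\mathfrak{p}^2$ computation: tracking the interplay of the Hilbert symbols $(-1,c_0)$, $(2,c_0)$ and the Weil index $\gamma_\psi^{-1}(c_0)$ across the four classes of $\mathfrak{o}^\times/(1+\mathfrak{p}^3)$ requires care, and the clean emergence of the factor $(1+\psi(2^{-1}))$ from \cite[Lemma 3.5]{Dani} is the substantive point that allows the $-1$ contribution to combine with the $L(2s-1,\tau^2)-1$ from the $\mathfrak{p}^3$ region into a single rational function of $q^{-s}$.
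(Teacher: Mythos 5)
Your proof is correct and follows essentially the same route as the paper's: decomposing the $c$-integral over $\mathfrak{p}^3$, $\mathfrak{p}^2\setminus\mathfrak{p}^3$ (which vanishes), and $\mathfrak{p}\setminus\mathfrak{p}^2$, using the Hilbert-symbol and Weil-factor identities to eliminate the $a$-dependence, and evaluating the $\mathfrak{p}\setminus\mathfrak{p}^2$ piece via representatives of $\mathfrak{o}^{\times}/(1+\mathfrak{p}^3)$ and \cite[Lemma 3.5]{Dani}. The paper packages the two contributions by factoring out a common constant (their display \eqref{D constant}) before reassembling the rational function $(L(2s-1,\tau^2)-2)$, but the volume bookkeeping and the final algebraic simplification are the same as yours.
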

\begin{proof}
As in the proof of Lemma~\ref{lemma:computation Psi with p equals 2}, we compute the integral using
$\overline{B}_1$, then for $b$ given by \eqref{b}, the support of $W$ implies
$a \in 1+\frak{p}$ and $c \in \frak{p}$, the choice of $\phi$ implies $x \in \mathfrak{p}$, we can assume $c\ne0$, and we deduce
\eqref{eq:W p equals 2} and \eqref{eq:phi_s p equals 2}.
Since our uniformizer $\varpi$ is $2$, and by virtue of Lemma~\ref{lemma:computation M(tau,s)f_s with p equals 2},
the integral $\Psi(W,M(\tau,s)f_s)$ becomes a constant
\begin{align}\label{D constant}
2A(\tau,\psi,s)\mathrm{vol}(\mathfrak{p})^{l-1}\gamma_{\psi}(-1)(2^{-1}\mathrm{vol}(\mathfrak{o})^2)\beta_{\psi}^{-2}
\end{align}
multiplied by the sum of integrals
\begin{align}\nonumber
&\int_{1+\mathfrak{p}}\int_{\mathfrak{p} \setminus \mathfrak{p}^2}
\psi^{-1}(a^{-1}c2^{-1})
(a^{-1}, c)  \gamma_{\psi}^{-1}(a)
\gamma_{\psi}^{-1}(-ac)\,dc\,da\\\label{int1 p = 2}
&=\int_{1+\mathfrak{p}}\int_{\mathfrak{p} \setminus \mathfrak{p}^2}
\psi^{-1}(a^{-1}c2^{-1})
\gamma_{\psi}^{-1}(-c)\,dc\,da
\end{align}
and
\begin{align}\label{int2 p = 2}
&\gamma_{\psi}^{-1}(2)(1 + \psi(2^{-1}))(L(2s-1,\tau^2)-1)  \int_{1+\mathfrak{p}}\int_{\mathfrak{p}^3}\psi^{-1}(a^{-1}c2^{-1})\,dc\,da.
\end{align}
Note that we used $\gamma_{\psi}^{-1}(-ac)=\gamma_{\psi}^{-1}(-c)\gamma_{\psi}(a)(c,a)$ to obtain
\eqref{int1 p = 2}, and $(-1,a)\gamma_{\psi}^{-1}(a)=\gamma_{\psi}(a)$ for \eqref{int2 p = 2}.

Regarding \eqref{int1 p = 2}, since $\psi$ is of level $1$ and $|a|=1$, $\psi^{-1}(a^{-1}c2^{-1})=-1$ for all $0\ne c \in \frak{p}$.
Moreover, since the additive volume of $\mathfrak{p} \setminus \mathfrak{p}^2$ equals $(2^{-1/2} - 2^{-3/2})$ and the multiplicative volume of
$\mathfrak{o}^{\times}$ is $1$,
\begin{align*}
\int_{\mathfrak{p} \setminus \mathfrak{p}^2} \gamma_{\psi}^{-1}(-c)\, dc
&=(2^{-1/2} - 2^{-3/2})\gamma_{\psi}^{-1}(2) \mathrm{vol}^{\times}(1 + \mathfrak{p}^3) (\sum_{v \in \{ \pm 1, \pm 5 \} } \gamma_{\psi}^{-1}(-v) (2, -v))\\
&=(2^{-1/2} - 2^{-3/2})\gamma_{\psi}^{-1}(2) \mathrm{vol}^{\times}(1 + \mathfrak{p}^3) 2(1 + \psi(2^{-1})),
\end{align*}
where the second equality follows using \cite[Lemma 3.5]{Dani}. It follows that \eqref{int1 p = 2} equals
\begin{align}\label{int11 p = 2}
-2\mathrm{vol}^{\times}(1+\mathfrak{p})(2^{-1/2} - 2^{-3/2})\gamma_{\psi}^{-1}(2) \mathrm{vol}^{\times}(1 + \mathfrak{p}^3)(1 + \psi(2^{-1})).
\end{align}
Also note that in \eqref{int2 p = 2}, $\psi^{-1}(a^{-1}c2^{-1})$ is identically $1$, so that the integral in \eqref{int2 p = 2} is a constant.

Combining \eqref{D constant}, \eqref{int2 p = 2} and \eqref{int11 p = 2}, we see that $\Psi(W,\phi,M(\tau,s)f_s)$ equals
\begin{align*}
&A(\tau,\psi,s)\mathrm{vol}(\mathfrak{p})^{l-1}\gamma_{\psi}(-1)\mathrm{vol}(\mathfrak{o})^2\beta_{\psi}^{-2}
\mathrm{vol}^{\times}(1+\mathfrak{p})(1 + \psi(2^{-1}))\gamma_{\psi}^{-1}(2)\\&\times[-2(2^{-1/2} - 2^{-3/2})\mathrm{vol}^{\times}(1 + \mathfrak{p}^3)+(L(2s-1,\tau^2)-1)\mathrm{vol}(\mathfrak{p}^3)].
\end{align*}
To continue, since $q=2$,
\begin{align*}
\mathrm{vol}^{\times}(1 + \mathfrak{p}^3) = 2^{-2},\quad \mathrm{vol}(\mathfrak{p}^3) = 2^{-5/2}, \quad
L(2s-1,\tau^2)=(1 - \tau^2(2) 2^{1-2s})^{-1}.
\end{align*}
Thus
\begin{align*}
&-2(2^{-1/2} - 2^{-3/2})\mathrm{vol}^{\times}(1 + \mathfrak{p}^3)+(L(2s-1,\tau^2)-1)\mathrm{vol}(\mathfrak{p}^3)
=\mathrm{vol}(\mathfrak{p}^3)\frac{-1 + \tau^2(2) 2^{2-2s}}{1 - \tau^2(2) 2^{1-2s}}.
\end{align*}
Comparing this to the result of Lemma~\ref{lemma:computation Psi with p equals 2} yields the result.
\end{proof}
\begin{thm}\label{thm:p=2}
Assume that $\tau$ is tamely ramified. Then $\gamma(s,\pi\times\tau,\psi) = \tau(2) 2^{1/2-s}$.
\end{thm}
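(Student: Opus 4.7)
The plan is to substitute the computations of Lemmas~\ref{lemma:computation Psi with p equals 2} and \ref{lemma:computation Psi M(tau,s)f_s with p equals 2} directly into the definition \eqref{def2:gamma} of $\gamma(s,\pi\times\tau,\psi)$, and then to verify that the resulting expression simplifies to $\tau(2)\,2^{1/2-s}$.

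First I would trivialize several of the prefactors in \eqref{def2:gamma}. Over $F = \mathbb{Q}_2$ we have $-1 \in 1+\mathfrak{p} \subset F^{\times 2}$, so $-I_{2l} \in I^+$; combined with the fact that $Z \subset I^+$ forces $\omega$ to be trivial (as noted at the start of Section~\ref{Q2}), this gives $\pi(-I_{2l}) = 1$. Since $\mathfrak{o}^{\times} = 1+\mathfrak{p}$ in $\mathbb{Q}_2$, tame ramification of $\tau$ is equivalent to $\tau$ being unramified, hence $\tau(-1) = 1$. Thus $\pi(-I_{2l})\tau(-1)^l = 1$. Moreover $c(s,l,\tau) = \tau(2)^{-2}\cdot 2^{2s-1}$, and since $\tau^2$ is unramified, standard Tate theory yields
\begin{align*}
\gamma(2s-1,\tau^2,\psi_2) = \epsilon(2s-1,\tau^2,\psi_2) \cdot \frac{1 - \tau^2(2)\,2^{1-2s}}{1 - \tau^{-2}(2)\,2^{2s-2}}.
\end{align*}

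Second, I would carry out the key algebraic cancellation. Write $X = \tau^2(2)\,2^{2-2s}$, so $\tau^{-2}(2)\,2^{2s-2} = X^{-1}$. The ratio produced by Lemma~\ref{lemma:computation Psi M(tau,s)f_s with p equals 2} contains the factor $(X-1)/(1 - \tau^2(2)\,2^{1-2s})$; multiplying by $\gamma(2s-1,\tau^2,\psi_2)$ cancels the factor $1 - \tau^2(2)\,2^{1-2s}$ and, using $1 - X^{-1} = -X^{-1}(X-1)$, reduces the quotient to $-X\cdot\epsilon(2s-1,\tau^2,\psi_2)$. Combined with the $s$-dependence $2^{2s-1}$ from $c(s,l,\tau)$ and $2^{-s-1}$ from Lemma~\ref{lemma:computation Psi M(tau,s)f_s with p equals 2}, the powers of $2$ and the $\tau$-values should combine into $\tau(2)\,2^{-s}$.

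The final step is to verify the residual constant identity
\begin{align*}
-\gamma_{\psi}(-1)\gamma(\psi)\gamma_{\psi}^{-1}(2)\bigl(1+\psi(2^{-1})\bigr)\cdot[\,s\text{-independent part of }\epsilon(2s-1,\tau^2,\psi_2)\,] = 2^{1/2}.
\end{align*}
I expect this dyadic Weil-index identity to be the main obstacle: it rests on an explicit evaluation of $\gamma(\psi)$ for $\psi$ of level $1$ on $\mathbb{Q}_2$, on the cocycle relation $\gamma_{\psi}(ab) = (a,b)\gamma_{\psi}(a)\gamma_{\psi}(b)$, and on the value of $1+\psi(2^{-1})$. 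The sign and absolute value bookkeeping can be done along the lines of \cite[Lemma~3.5]{Dani}, which is already invoked repeatedly in the proof of Lemma~\ref{lemma:computation Psi M(tau,s)f_s with p equals 2}. Once this constant is pinned down, substituting back into \eqref{def2:gamma} gives $\tau(2)\,2^{1/2-s}$ as claimed.
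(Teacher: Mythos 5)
Your plan follows the paper's route: substitute Lemmas~\ref{lemma:computation Psi with p equals 2} and \ref{lemma:computation Psi M(tau,s)f_s with p equals 2} into \eqref{def2:gamma}, use $\pi(-I_{2l})=\tau(-1)=1$ (which is correct over $\BQ_2$ since $-1\in 1+\mathfrak p$), and simplify the product of $\epsilon$- and $L$-factors against the rational function $\frac{-1+\tau^2(2)2^{2-2s}}{1-\tau^2(2)2^{1-2s}}$ coming from Lemma~\ref{lemma:computation Psi M(tau,s)f_s with p equals 2}. But there are two genuine defects in the execution.

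First, there is an algebraic sign error with consequences. You write $1-X^{-1}=-X^{-1}(X-1)$; expanding the right-hand side gives $-1+X^{-1}$, which is the \emph{negative} of $1-X^{-1}$. The correct identity is $1-X^{-1}=X^{-1}(X-1)$, so $(X-1)/(1-X^{-1})=+X$, not $-X$. This sign propagates into your residual constant identity, which should have no leading minus:
\begin{align*}
\gamma_{\psi}(-1)\gamma(\psi)\gamma_{\psi}^{-1}(2)\bigl(1+\psi(2^{-1})\bigr)=2^{1/2}.
\end{align*}
As stated, with the minus sign, the identity is false, and an attempt to verify it would fail by a sign. Relatedly, the phrase ``$s$-independent part of $\epsilon(2s-1,\tau^2,\psi_2)$'' is vague and hints at a gap you did not close: since $\tau^2$ is unramified and $\psi_2$ has level $0$ (the self-dual measure for $\psi_2$ gives $\mathfrak o$ volume $1$), one has $\epsilon(2s-1,\tau^2,\psi_2)=1$ identically, and this has to be justified. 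The paper avoids computing $\epsilon(\cdots,\psi_2)$ directly: it first uses the twisting property $\gamma(2s-1,\tau^2,\psi_2)=c(s,l,\tau)^{-1}2^{1/2}\gamma(2s-1,\tau^2,\psi)$, which also absorbs $c(s,l,\tau)$ cleanly, and then quotes the level-$1$ value of $\epsilon(2s-1,\tau^2,\psi)$ from \cite[\S 23.5]{BH06}.

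Second, and more seriously, you stop exactly where the real work begins: the residual dyadic identity is stated as a target but never proved. This is the substantive content of the proof. The paper closes it off using $\gamma_{\psi}(2)=1$ and $\gamma_{\psi}(-1)=\psi(-2^{-1})$ (both from \cite{Dani}), reducing the constant to $\psi(-2^{-1})\gamma(\psi)\bigl(1+\psi(2^{-1})\bigr)$, which is then checked to equal $2^{1/2}$ for each of the two level-$1$ characters $\psi(x)=e^{\pm\pi i x}$ of $\BQ_2$ via Rao's table of Weil indices. Without that verification --- and with the sign corrected --- your argument is incomplete.
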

\begin{proof}
Since by Lemma~\ref{lemma:computation Psi with p equals 2}, $\Psi(W, \phi, f_s)$ is nonzero, by definition
\begin{align*}
\gamma(s,\pi\times\tau,\psi)&= \pi(-I_{2l})\tau(-1)^l \gamma(s,\tau,\psi)c(s,l,\tau)\frac{C(s,\tau,\psi)\Psi(W, \phi, M(\tau,s) f_s)}{\Psi(W, \phi, f_s)}.
\end{align*}
Recall that by \eqref{C factor},
\begin{align*}
C(s,\tau,\psi)=\gamma_{\psi}(-1)\gamma(\psi)\frac{\gamma(2s-1,\tau^2,\psi_2)}{\gamma(s,\tau,\psi)}.
\end{align*}
We also have $\pi(-I_{2l})=\tau(-1)=1$, because $-1\in1+\mathfrak{p}$ ($F=\mathbb{Q}_2$). Plugging in Lemma~\ref{lemma:computation Psi M(tau,s)f_s with p equals 2} and because $A(\tau,\psi,s)=2^{-2-s}\tau(2)$ (note that $\mathrm{vol}^{\times}(1 + \mathfrak{p}^3)=2^{-2}$) we obtain
\begin{align*}
\gamma(s,\pi\times\tau,\psi)&= c(s,l,\tau)\gamma_{\psi}(-1)\gamma(\psi)\gamma(2s-1,\tau^2,\psi_2)
\\&\quad\times2^{-1-s}\tau(2)(1 + \psi(2^{-1}))\gamma_{\psi}^{-1}(2)\frac{-1 + \tau^2(2) 2^{2-2s}}{1 - \tau^2(2) 2^{1-2s}}.
\end{align*}

According to the twisting property of the standard $\gamma$-factor (see e.g., \cite[\S~23.5, Lemma 1]{BH06}),
\begin{align*}
\gamma(2s-1,\tau^2,\psi_2)=|2|^{2s-3/2}\tau^2(2)\gamma(2s-1,\tau^2,\psi)=c(s,l,\tau)^{-1}2^{1/2}\gamma(2s-1,\tau^2,\psi).
\end{align*}
Also by definition,
\begin{align*}
\gamma(2s-1, \tau^2, \psi) = \epsilon(2s-1, \tau^2, \psi) \frac{L(\tau^{-2}, 2-2s)}{L(\tau^2, 2s-1)}
=\epsilon(2s-1, \tau^2, \psi) \frac{1 - \tau^2(2) 2^{1-2s}}{1 - \tau^{-2}(2) 2^{2s-2}}.
\end{align*}
Then using the value of $\epsilon(2s-1, \tau^2,\psi)$ given by \cite[\S~23.5, Proposition]{BH06},
\begin{align*}
\epsilon(2s-1, \tau^2, \psi) \frac{1 - \tau^2(2) 2^{1-2s}}{1 - \tau^{-2}(2) 2^{2s-2}}
\times \frac{-1 + \tau^2(2) 2^{2-2s}}{1 - \tau^2(2) 2^{1-2s}}=2^{1/2}.
\end{align*}
Therefore
\begin{align*}
\gamma(s,\pi\times\tau,\psi)&= \gamma_{\psi}(-1)\gamma(\psi)
(1 + \psi(2^{-1}))\gamma_{\psi}^{-1}(2)\tau(2)2^{-s}.
\end{align*}

To complete the computation note that $\gamma_{\psi}(2)=1$ (see \cite[(3.23)]{Dani}),
$\gamma_{\psi}(-1)=\psi(-2^{-1})$ (\cite[(3.21)]{Dani}), and
\begin{align*}
\psi(-2^{-1})\gamma(\psi)(1 + \psi(2^{-1}))=2^{1/2}.
\end{align*}
This can be verified for both choices of $\psi$, which are $\psi(x) = e^{\pm\pi i x}$, using the formulas in
\cite[p.~370]{Rao} for $\gamma(\psi)$.
\end{proof}

\bibliographystyle{alpha-abbrvsort}
\bibliography{bib}

\def\cprime{$'$} \def\cprime{$'$} \def\cprime{$'$}
\begin{thebibliography}{CKPSS04}

\bibitem[Adl98]{Adler1998}
J.~D. Adler.
\newblock Refined anisotropic {$K$}-types and supercuspidal representations.
\newblock {\em Pacific J. Math.}, 185(1):1--32, 1998.

\bibitem[Adr16]{Adrian2016}
M.~Adrian.
\newblock The {L}anglands parameter of a simple supercuspidal representation:
  odd orthogonal groups.
\newblock {\em J. Ramanujan Math. Soc.}, 31(2):195--214, 2016.

\bibitem[AL16]{AL14}
M.~Adrian and B.~Liu.
\newblock Some results on simple supercuspidal representations of {${\rm
  GL}_n(F)$}.
\newblock {\em J. Number Theory}, 160:117--147, 2016.

\bibitem[BAS09]{BS}
A.~Ben-Artzi and D.~Soudry.
\newblock {$L$}-functions for {${\rm U}_m\times R_{E/F}{\rm GL}_n\
  (n\leq[{m\over2}])$}.
\newblock In {\em Automorphic forms and {$L$}-functions {I}. {G}lobal aspects},
  volume 488 of {\em Contemp. Math.}, pages 13--59. Amer. Math. Soc.,
  Providence, RI, 2009.

\bibitem[BAS16]{BS2}
A.~Ben-Artzi and D.~Soudry.
\newblock On {$L$}-functions for {${\rm U}_{2k}\times{\rm R}_{E/F}{\rm GL}_m$},
  {$(k<m)$}.
\newblock In {\em Advances in the theory of automorphic forms and their
  {$L$}-functions}, volume 664 of {\em Contemp. Math.}, pages 69--104. Amer.
  Math. Soc., Providence, RI, 2016.

\bibitem[BS98]{BerndtSchmidt1998}
R.~Berndt and R.~Schmidt.
\newblock {\em Elements of the representation theory of the {J}acobi group},
  volume 163 of {\em Progress in Mathematics}.
\newblock Birkh\"auser Verlag, Basel, 1998.

\bibitem[BHS]{BHS17}
C.~Blondel, G.~Henniart, and S.~Stevens.
\newblock Jordan blocks of cuspidal representations of symplectic groups.
\newblock preprint.

\bibitem[Bum05]{Bu}
D.~Bump.
\newblock The {R}ankin--{S}elberg method: an introduction and survey.
\newblock In J.~W. Cogdell, D.~Jiang, S.~S. Kudla, D.~Soudry, and R.~Stanton,
  editors, {\em Automorphic representations, ${L}$-functions and applications:
  progress and prospects}, pages 41--73. Ohio State Univ. Math. Res. Inst.
  Publ., 11, de Gruyter, Berlin, 2005.

\bibitem[BH06]{BH06}
C.~J. Bushnell and G.~Henniart.
\newblock {\em The local {L}anglands conjecture for {${\rm GL}(2)$}}, volume
  335 of {\em Grundlehren der Mathematischen Wissenschaften [Fundamental
  Principles of Mathematical Sciences]}.
\newblock Springer-Verlag, Berlin, 2006.

\bibitem[BH10]{BH10}
C.~J. Bushnell and G.~Henniart.
\newblock The essentially tame local {L}anglands correspondence, {III}: the
  general case.
\newblock {\em Proc. Lond. Math. Soc. (3)}, 101(2):497--553, 2010.

\bibitem[BH14]{BH14}
C.~J. Bushnell and G.~Henniart.
\newblock Langlands parameters for epipelagic representations of {${\rm
  GL}_n$}.
\newblock {\em Math. Ann.}, 358(1-2):433--463, 2014.

\bibitem[BK98]{BushnellKutzko1998}
C.~J. Bushnell and P.~C. Kutzko.
\newblock Smooth representations of reductive {$p$}-adic groups: structure
  theory via types.
\newblock {\em Proc. London Math. Soc. (3)}, 77(3):582--634, 1998.

\bibitem[CKPSS01]{CKPS2}
J.~W. Cogdell, H.~H. Kim, I.~I. Piatetski-Shapiro, and F.~Shahidi.
\newblock On lifting from classical groups to {${\rm GL}_N$}.
\newblock {\em Publ. Math. Inst. Hautes \'Etudes Sci.}, 93(1):5--30, 2001.

\bibitem[CKPSS04]{CKPS}
J.~W. Cogdell, H.H. Kim, I.~I. Piatetski-Shapiro, and F.~Shahidi.
\newblock Functoriality for the classical groups.
\newblock {\em Publ. Math. Inst. Hautes \'Etudes Sci.}, 99(1):163--233, 2004.

\bibitem[CPSS11]{CPSS}
J.~W. Cogdell, I.~I. Piatetski-Shapiro, and F.~Shahidi.
\newblock Functoriality for the quasisplit classical groups.
\newblock In {\em On certain {$L$}-functions}, volume~13 of {\em Clay Math.
  Proc.}, pages 117--140. Amer. Math. Soc., Providence, RI, 2011.

\bibitem[FRT]{FRT18}
T.~Feng, N.~Ronchetti, and C-C. Tsai.
\newblock Epipelagic langlands parameters and l-packets for unitary groups.
\newblock preprint.

\bibitem[GPSR87]{GPS}
S.~Gelbart, I.~Piatetski-Shapiro, and S.~Rallis.
\newblock {\em ${L}$-Functions for ${G\times GL(n)}$}, volume 1254 of {\em
  Lecture Notes in Math.}
\newblock Springer-Verlag, New York, 1987.

\bibitem[Gin90]{G}
D.~Ginzburg.
\newblock ${L}$-functions for ${{\rm SO}_{n}\times{\rm GL}_{k}}$.
\newblock {\em J. Reine Angew. Math.}, 1990(405):156--180, 1990.

\bibitem[GRS98]{GRS4}
D.~Ginzburg, S.~Rallis, and D.~Soudry.
\newblock ${L}$-functions for symplectic groups.
\newblock {\em Bull. Soc. math. France}, 126:181–--244, 1998.

\bibitem[GRS11]{RGS}
D.~Ginzburg, S.~Rallis, and D.~Soudry.
\newblock {\em The descent map from automorphic representations of {${\rm
  GL}(n)$} to classical groups}.
\newblock World Scientific Publishing, Singapore, 2011.

\bibitem[GS16]{SzpGol15}
D.~Goldberg and D.~Szpruch.
\newblock Plancherel measures for coverings of {$p$}-adic {${\rm SL}_2(F)$}.
\newblock {\em Int. J. Number Theory}, 12(7):1907--1936, 2016.

\bibitem[GR10]{GR10}
B.~H. Gross and M.~Reeder.
\newblock Arithmetic invariants of discrete {L}anglands parameters.
\newblock {\em Duke Math. J.}, 154(3):431--508, 2010.

\bibitem[HT01]{HT2001}
M.~Harris and R.~Taylor.
\newblock {\em The geometry and cohomology of some simple {S}himura varieties},
  volume 151 of {\em Annals of Mathematics Studies}.
\newblock Princeton University Press, Princeton, NJ, 2001.
\newblock With an appendix by Vladimir G. Berkovich.

\bibitem[Hen00]{Hn}
G.~Henniart.
\newblock Une preuve simple des conjectures de {L}anglands pour {${\rm GL}(n)$}
  sur un corps {$p$}-adique.
\newblock {\em Invent. Math.}, 139(2):439--455, 2000.

\bibitem[How77]{Howe1977}
R.~E. Howe.
\newblock Some qualitative results on the representation theory of {${\rm
  Gl}_{n}$} over a {$p$}-adic field.
\newblock {\em Pacific J. Math.}, 73(2):479--538, 1977.

\bibitem[ILM17]{ILM}
A.~Ichino, E.~Lapid, and Z.~Mao.
\newblock On the formal degrees of square-integrable representations of odd
  special orthogonal and metaplectic groups.
\newblock {\em Duke Math. J.}, 166(7):1301--1348, 2017.

\bibitem[JPSS83]{JPSS}
H.~Jacquet, I.~I. Piatetski-Shapiro, and J.~A. Shalika.
\newblock {R}ankin--{S}elberg convolutions.
\newblock {\em Amer. J. Math.}, 105(2):367--464, 1983.

\bibitem[Kal]{K15}
T.~Kaletha.
\newblock Epipelagic {L}-packets and rectifying characters.
\newblock To appear in Invent. Math.

\bibitem[Kal13]{K13}
T.~Kaletha.
\newblock Simple wild {$L$}-packets.
\newblock {\em J. Inst. Math. Jussieu}, 12(1):43--75, 2013.

\bibitem[Kap13]{Kaplan2013a}
E.~Kaplan.
\newblock Multiplicativity of the gamma factors of {R}ankin-{S}elberg integrals
  for {$SO_{2l}\times GL_n$}.
\newblock {\em Manuscripta Math.}, 142(3-4):307--346, 2013.

\bibitem[Kap15]{Kaplan2015}
E.~Kaplan.
\newblock Complementary results on the {R}ankin-{S}elberg gamma factors of
  classical groups.
\newblock {\em J. Number Theory}, 146:390–--447, 2015.

\bibitem[Kub67]{Kubota}
T.~Kubota.
\newblock Topological covering of {${\rm SL}(2)$} over a local field.
\newblock {\em J. Math. Soc. Japan}, 19(1):114--121, 1967.

\bibitem[Kub69]{Kubota2}
T.~Kubota.
\newblock {\em On automorphic functions and the reciprocity law in a number
  field.}
\newblock Lectures in Mathematics, Department of Mathematics, Kyoto University,
  No. 2. Kinokuniya Book-Store Co. Ltd., Tokyo, 1969.

\bibitem[Lan70]{La6}
R.~P. Langlands.
\newblock On {A}rtin's {L}-{F}unctions.
\newblock {\em Rice University Stud.}, 56(2):23--28, 1970.

\bibitem[LR05]{LR}
E.~M. Lapid and S.~Rallis.
\newblock On the local factors of representations of classical groups.
\newblock In J.~W. Cogdell, D.~Jiang, S.~S. Kudla, D.~Soudry, and R.~Stanton,
  editors, {\em Automorphic representations, ${L}$-functions and applications:
  progress and prospects}, pages 309--359. Ohio State Univ. Math. Res. Inst.
  Publ., 11, de Gruyter, Berlin, 2005.

\bibitem[MP94]{MoyPrasad1994}
A.~Moy and G.~Prasad.
\newblock Unrefined minimal {$K$}-types for {$p$}-adic groups.
\newblock {\em Invent. Math.}, 116(1-3):393--408, 1994.

\bibitem[MP96]{MoyPrasad1996}
A.~Moy and G.~Prasad.
\newblock Jacquet functors and unrefined minimal {$K$}-types.
\newblock {\em Comment. Math. Helv.}, 71(1):98--121, 1996.

\bibitem[Oi]{O18}
M.~Oi.
\newblock Simple supercuspidal l-packets of quasi-split classical groups.
\newblock preprint.

\bibitem[Ral82]{Rallis82}
S.~Rallis.
\newblock Langlands' functoriality and the {W}eil representation.
\newblock {\em Amer. J. Math.}, 104(3):469--515, 1982.

\bibitem[Rao93]{Rao}
R.~Rao.
\newblock On some explicit formulas in the theory of {W}eil representations.
\newblock {\em Pacific J. Math.}, 157(2):335–--371, 1993.

\bibitem[RY14]{RY14}
M.~Reeder and J.-K. Yu.
\newblock Epipelagic representations and invariant theory.
\newblock {\em J. Amer. Math. Soc.}, 27(2):437--477, 2014.

\bibitem[Sha83]{Shahidi1983}
F.~Shahidi.
\newblock Local coefficients and normalization of intertwining operators for
  {${\rm GL}(n)$}.
\newblock {\em Compositio Math.}, 48(3):271--295, 1983.

\bibitem[Sha90]{Sh3}
F.~Shahidi.
\newblock A proof of {L}anglands' conjecture on {P}lancherel measures;
  complementary series for {$p$}-adic groups.
\newblock {\em Ann. of Math. (2)}, 132(2):273--330, 1990.

\bibitem[Sou93]{Soudry}
D.~Soudry.
\newblock Rankin--{S}elberg convolutions for {${\rm SO}_{2l+1}\times{\rm
  GL}_n$}: local theory.
\newblock {\em Mem. Amer. Math. Soc.}, 105(500):vi+100, 1993.

\bibitem[Sou95]{Soudry3}
D.~Soudry.
\newblock On the {A}rchimedean theory of {R}ankin-{S}elberg convolutions for
  {${\rm SO}_{2l+1}\times{\rm GL}_n$}.
\newblock {\em Ann. Sci. \'Ecole Norm. Sup. (4)}, 28(2):161--224, 1995.

\bibitem[Sou00]{Soudry2}
D.~Soudry.
\newblock Full multiplicativity of gamma factors for {${\rm
  SO}_{2l+1}\times{\rm GL}_n$}.
\newblock {\em Israel J. Math.}, 120(1):511--561, 2000.

\bibitem[Ste05]{Stevens2005}
S.~Stevens.
\newblock Semisimple characters for {$p$}-adic classical groups.
\newblock {\em Duke Math. J.}, 127(1):123--173, 2005.

\bibitem[Ste08]{Stevens2008}
S.~Stevens.
\newblock The supercuspidal representations of {$p$}-adic classical groups.
\newblock {\em Invent. Math.}, 172(2):289--352, 2008.

\bibitem[Swe]{Sweet95}
W.~J. Sweet.
\newblock Functional equations of ${\rm p}$-adic zeta integrals and
  representations of the metaplectic group.
\newblock Preprint 1995.

\bibitem[Szp10]{Dani}
D.~Szpruch.
\newblock The {L}anglands--{S}hahidi {M}ethod for the metaplectic group and
  applications.
\newblock Thesis, Tel Aviv University, Israel, 2010.

\bibitem[Szp11]{Szpruch11}
D.~Szpruch.
\newblock On the existence of a {$p$}-adic metaplectic {T}ate-type
  {$\tilde\gamma$}-factor.
\newblock {\em Ramanujan J.}, 26(1):45--53, 2011.

\bibitem[Yu01]{JKYu2001}
J.-K. Yu.
\newblock Construction of tame supercuspidal representations.
\newblock {\em J. Amer. Math. Soc.}, 14(3):579--622, 2001.

\bibitem[Zha]{Zhang2016}
Q.~Zhang.
\newblock On the dependence of the local rankin-selberg gamma factors of ${{\rm
  Sp}_{2n}\times {\rm GL}_{m}}$ on ${\psi}$.
\newblock Preprint 2016, available at {http://arxiv.org/pdf/1601.07618v1.pdf}.

\end{thebibliography}

\end{document}